\documentclass[a4paper,11pt]{article}
\usepackage[all]{xy}
\usepackage[T1]{fontenc}
\usepackage[utf8]{inputenc}
\usepackage{lmodern}
\usepackage{microtype}
\usepackage{mathtools}
\usepackage{amsfonts}
\usepackage{amsmath}
\usepackage{amssymb}
\usepackage{pictexwd,dcpic}
\usepackage{thmtools}
\usepackage[top=3cm,bottom=3cm,left=2.5cm,right=2.5cm]{geometry} % Adjusted margins for better appeal
\usepackage{hyperref}
\usepackage{fancyhdr}
\usepackage{sectsty} % For customizing section titles
\usepackage{titlesec} % For fine-tuning section spacing
\usepackage{parskip} % For paragraph spacing

\sectionfont{\large\bfseries\centering}
\subsectionfont{\normalsize\bfseries}
\titleformat{\section}{\large\bfseries\centering}{\thesection}{1em}{}
\titlespacing{\section}{0pt}{2em plus 1em minus 0.5em}{1em plus 0.5em minus 0.5em}
\titlespacing{\subsection}{0pt}{1.5em plus 0.5em minus 0.5em}{0.75em plus 0.25em minus 0.25em}

\newtheorem{theorem}{Theorem}[section]
\newtheorem{lemma}[theorem]{Lemma}
\newtheorem{proposition}[theorem]{Proposition}
\newtheorem{corollary}[theorem]{Corollary}
\newtheorem{definition}[theorem]{Definition}
\newtheorem{rem}[theorem]{Remark}
\newenvironment{proof}[1][Proof]{\begin{trivlist}
\item[\hskip \labelsep {\bfseries #1}]}{\end{trivlist}}

\newenvironment{conjecture}[1][Conjecture]{\begin{trivlist}
\item[\hskip \labelsep {\bfseries #1}]}{\end{trivlist}}

\newcommand{\qed}{\nobreak \ifvmode \relax \else
      \ifdim\lastskip<1.5em \hskip-\lastskip
      \hskip1.5em plus0em minus0.5em \fi \nobreak
      \vrule height0.75em width0.5em depth0.25em\fi}

\DeclareFontFamily{U}{wncy}{}
\DeclareFontShape{U}{wncy}{m}{n}{<->wncyr10}{}
\DeclareSymbolFont{mcy}{U}{wncy}{m}{n}
\DeclareMathSymbol{\Sh}{\mathord}{mcy}{"58}

\date{}
\title{\textbf{%Equivariant Leading Terms and 
Class Number Relations in Abelian Extensions of Global Fields}}
\author{
  Saad EL BOUKHARI\hspace*{0.2 cm}and\hspace*{0.2 cm}Ben FORR\'AS
   }
\newcommand{\Sc}{\Sigma}
\begin{document}
\maketitle
\footnotetext[1]{S. El Boukhari. University Marie et Louis Pasteur, Lab. of Math. of Besançon, Besançon, 25000, France. 
\textit{E-mails:}  \texttt{saad.el\_boukhari@umlp.fr}, \texttt{saadelboukhari1234@gmail.com}}
\footnotetext[2]{B. Forrás. University of Ottawa, Department of Mathematics and Statistics, STEM Complex, 150 Louis Pasteur Pvt, Ottawa, ON, Canada K1N 6N5. \textit{E-mails:}  \texttt{bforras@uottawa.ca}}
\vspace{3em} % Increased space after title for a cleaner look

\begin{abstract}
Consider a finite abelian extension $K/k$ of global fields with Galois group \(G\). We study the rank-one component of the generalized Stickelberger module
associated with \(K/k\) and a finite set \(S\) of places of \(k\). Under explicit
splitting conditions on \(S\), we compute generalized indices of this module with respect to the torsion-free part of the \(S\)-unit group of \(K\). We also obtain \(p\)-primary refinements which include the non-semisimple case \(p\mid |G|\). As applications, we derive divisibility relations between the
\(S\)-class numbers of \(K\) and \(k\), both for number fields and for function fields.
\end{abstract}

\vspace{2em}

\textit{Keywords: Global fields, Artin $L$-functions, generalized index of modules, class numbers, Dirichlet regulator, group homology and cohomology, Rubin--Stark elements.}

\vspace{1em}

\textit{2020 Mathematics Subject Classification: 11R42, 11R29, 11R27, 11R59.}

\vspace{3em} 
\section{Introduction}

Let $K/k$ be a finite abelian extension of global fields, and let $S$ be a finite set of places of $k$. 
In this article, we prove divisibility relations between the $S$-class numbers of $k$ and $K$ under certain hypotheses.
This objective is achieved by computing the index of the so-called generalized Stickelberger module $\varrho_{K/k,S}$.
This is a certain submodule of the $\mathbb C$-vector space $\mathbb C \otimes_{\mathbb Z} O^\times_{K,S}$, where $O^\times_{K,S}$ is the group of $S$-units in $K$. The generalized Stickelberger module $\varrho_{K/k,S}$ is defined in terms of the leading term of the equivariant $S$-depleted Artin $L$-function of $K/k$ at zero: thereby $\varrho_{K/k,S}$ is connected to $S$-class numbers via the analytic class number formula.

For $r\ge0$, the rank-$r$ Rubin--Stark conjecture, originally formulated for number fields by Rubin in the seminal paper \cite{Rubin}, predicts the existence of certain integral elements related to the $r$th derivative of the equivariant $S$-depleted Artin $L$-function. 
For characters of $G$ for which the $S$-depleted Artin $L$-function has vanishing order one, the generalized Stickelberger module $\varrho_{K/k,S}$ is closely related to rank-one Rubin--Stark elements.
The Rubin--Stark conjecture was extended to function fields by Popescu \cite{PopescuStark}. The conjecture is known in many cases of interest: for instance, by a theorem of Burns \cite{BurnsCongruences01}, it holds whenever the equivariant Tamagawa number conjecture holds. See Remark~\ref{REMA_3_2} for a list of cases in which the Rubin--Stark conjecture has been proven.

We now describe our results. Letting the depletion set $S$ consist of a single place completely split in $K/k$, we compute the $p$-part of the generalized index of the generalized Stickelberger module inside the torsion-free part of $O^\times_{K,S}$ for each rational prime $p$ (Theorem~\ref{Theo:p-primary-rank-one-index-formula}). This index is given by the quotient of the $S$-class numbers multiplied by a certain local factor at $p$. We provide a bound for this factor in general, and show that it is trivial in several important cases.
Using the rank one index formula for the generalized Stickelberger module, we also derive divisibility relations between the $S$-class groups under the assumption of the Rubin--Stark conjecture (Theorem~\ref{Thm:RS-obstruction-divisibility}).
These results constitute generalizations of the main theorems of \cite{El26}, where $k$ is taken to be a quadratic imaginary number field and $S$ is the set consisting of the single archimedean place, and the degree $[K:k]$ is assumed to be coprime to $p$.

We also determine the index of the generalized Stickelberger module for depletion sets $S$ containing more than one place. In particular, in the case when $S$ consists of two places, one completely split and one that does not decompose in $K/k$, we obtain divisibility relations for $S$-class numbers (Theorem~\ref{Thm:cardinality-two-final-divisibility}).

This article is organised as follows. Standing notations are defined in \S\ref{section:notations}. In \S\ref{section:preliminaries}, we recall the relevant results concerning the Rubin--Stark conjecture and generalized Stickelberger modules. The novel part of this work is contained in \S\ref{section:rank-one}: after making some general observations, we distinguish between cases depending on the cardinality of the depletion set $S$ in ascending order. Finally in \S\ref{section:conclusion}, we discuss the problem of obtaining divisibility relations for $S$-class groups when $|S|\ge3$.

\paragraph{Acknowledgement}
BF's research is funded by the Deutsche Forschungsgemeinschaft (DFG, German Research Foundation), project no.~559516518.

\section{Notations} \label{section:notations}

We fix the following notations for the remainder of this work.

\begin{itemize}
\item Let \(K/k\) be a finite abelian extension of global fields, and put
\(
G:=\mathrm{Gal}(K/k).
\)

\item If \(k\) is a number field, let \(S_\infty\) denote the set of archimedean
places of \(k\). If \(k\) is a function field, we put \(S_\infty=\emptyset\).
We denote by \({\mathrm{Ram}}(K/k)\) the set of places of \(k\) which ramify
in \(K/k\).

\item If \(S\) is a finite set of places of \(k\), we shall always assume that
\(
S\supseteq S_\infty
\)
in the number field case, and that \(S\neq\emptyset\) in the function field case.

\item For such a set \(S\), let \(O_{K,S}\) denote the ring of \(S\)-integers of
\(K\), and let
\(
O_{K,S}^{\times}
\)
denote its group of units.

\item If $L$ is a global field, let \(\mu(L)\) denote the group of roots of unity in \(L\), and put
\(
w_L:=|\mu(L)|.
\)

\item Let
\(
\widehat{G}:=\mathrm{Hom}(G,\mathbb{C}^{\times})
\)
be the group of complex characters of \(G\). For \(\chi\in\widehat{G}\), we write
\[
e_\chi:=\frac{1}{|G|}\sum_{\sigma\in G}\chi(\sigma)\sigma^{-1}
\]
for the associated idempotent in \(\mathbb{C}[G]\). We denote by \(\chi_0\) the
trivial character of \(G\).

\item If \(v\) is a place of \(k\), we write \(D_v=D_v(K/k)\) for the decomposition
subgroup of \(G\) at \(v\). This is well-defined up to conjugacy, and hence is
well-defined as a subgroup of \(G\), since \(G\) is abelian. If $D_v=G$, then we say that \(v\) does not decompose in \(K/k\).

\item If \(S\) is a finite set of places of \(k\), then \(S_K\) denotes the set of
places of \(K\) above \(S\). We let \(Y_{S,K}:=\mathbb{Z}S_K\) denote the free abelian
group on \(S_K\), and define
\[
X_{K,S}:=
\left\{
\sum_{w\in S_K}a_w w\in \mathbb{Z}S_K
\;\middle|\;
\sum_{w\in S_K}a_w=0
\right\}.
\]

\item If \(v\) is a non-archimedean place of a global field, we write
\[
Nv:=|\kappa(v)|
\]
for the cardinality of its residue field. We do not use this notation for
archimedean places.

\item We normalize absolute values so that the product formula holds. Thus, if
\(w\) is non-archimedean, then
\[
|x|_w=Nw^{-\mathrm{ord}_w(x)}.
\]
In the number field case, at archimedean places we use the usual product-formula
normalization.

\item If \(M\) is a \(\mathbb{Z}\)-module of finite type, let \(M_{\mathrm{tors}}\)
denote its torsion \(\mathbb{Z}\)-submodule, and let
\(
M_{\mathrm{tf}}:=M/M_{\mathrm{tors}}
\)
denote its torsion-free quotient. If \(R\) is any commutative ring, we write
\[
RM:=R\otimes_{\mathbb{Z}}M.
\]
If $M$ is finite, we may write $M(p):=\mathbb{Z}_pM$ for the $p$-part of $M$.

\item For a global field \(L\), and for a finite non-empty set \(S\) of places
of \(L\) containing the archimedean places in the number field case, we define
the logarithmic regulator map by
\[
\lambda_{L,S}:O_{L,S}^{\times}\longrightarrow \mathbb R X_{L,S},
\qquad
u\longmapsto -\sum_{w\in S}\log |u|_w\,w.
\]
It has finite kernel and its image is a full lattice in \(\mathbb R X_{L,S}\).
We denote by \(R_{L,S}\) the covolume of
\(\lambda_{L,S}((O_{L,S}^{\times})_{\mathrm{tf}})\) with respect to the lattice
\(X_{L,S}\). If \(O_{L,S}^{\times}\) has rank zero, we use the convention
\[
R_{L,S}=1.
\]

\item For a global field \(L\), and for a non-empty finite set of places \(S\) of \(L\), we
write \(\mathrm{Cl}_{L,S}\) for the \(S\)-class group of \(L\), and
\(
h_{L,S}:=|\mathrm{Cl}_{L,S}|.
\) If $p$ is a prime number we also write: $h_{L,S}(p):=p^{v_p(h_{L,S})}.$

\item For a global field \(L\), we write \(\zeta_{L,S}(s)\) for the
\(S\)-truncated zeta function of \(L\). We always take leading terms at
\(s=0\) with respect to the variable \(s\). Thus
\[
\zeta_{L,S}^{*}(0):=
\lim_{s\to 0}s^{-\mathrm{ord}_{s=0}\zeta_{L,S}(s)}\zeta_{L,S}(s).
\]
With the above normalization of absolute values and regulators, the
\(S\)-class number formula has the uniform form
\[
\zeta_{L,S}^{*}(0)=\pm\frac{h_{L,S}R_{L,S}}{w_L}
\]
for both number fields and function fields.

\item If \(p\) is a rational prime number, we denote by \(v_p\) the associated
\(p\)-adic valuation.

\item If \(R\) is a commutative ring with unity and \(H\) is a finite group, we write
\[
\mathrm{Aug}_H:R[H]\longrightarrow R,
\qquad
\sum_{\sigma\in H}a_\sigma\sigma\longmapsto
\sum_{\sigma\in H}a_\sigma
\]
for the augmentation map relative to \(H\). We write
\[
\Delta_R H:=\ker(\mathrm{Aug}_H)
=
\left\langle \sigma-1\;\middle|\;\sigma\in H\right\rangle_{R[H]}
\]
for the associated augmentation ideal. If \(R=\mathbb{Z}\), we simply write
\(
\Delta H:=\Delta_{\mathbb{Z}}H.
\)
If \(R=\mathbb{Z}_p\), we write
\(
\Delta_p H:=\Delta_{\mathbb{Z}_p}H.
\)
\end{itemize}
\section{Preliminaries} \label{section:preliminaries}
\subsection{Equivariant leading terms at \(s=0\)}

Let \(K/k\) be a finite abelian extension of global fields with
\[
G:=\mathrm{Gal}(K/k).
\]
Let \(S\) be a finite set of places of \(k\). In the number field case, we
assume that \(S\) contains all archimedean places of \(k\); in the function
field case, we assume that \(S\) is non-empty. We do not assume, unless
explicitly stated, that \(S\) contains the places ramified in \(K/k\).

For \(\chi\in\widehat G\), let \(L_S(s,\chi)\) be the usual \(S\)-truncated
Artin \(L\)-function associated with \(\chi\), with the standard Artin local
factors at the places \(v\notin S\). In particular, at ramified places
\(v\notin S\), the local factor is defined using inertia invariants. Thus the
definition does not require \(S\) to contain \({\mathrm{Ram}}(K/k)\). For more details, see %\cite[Rem. 3.1]{Solomon} 
\cite[Ch. VII, \S 10]{Neukirch} or \cite[Chs. 0,1]{Tate84}.

For \(v\in S\), let \(D_v=D_v(K/k)\) denote the decomposition subgroup of \(G\)
at \(v\). We put
\[
r_S(\chi):=\mathrm{ord}_{s=0}L_S(s,\chi).
\]
The standard order formula remains valid in this imprimitive setting:
\begin{equation} \label{order-formula}
r_S(\chi)=
\begin{cases}
\left|\left\{v\in S\mid \chi(D_v)=1\right\}\right|,
& \text{if }\chi\neq \chi_0,\\[0.4em]
|S|-1,
& \text{if }\chi=\chi_0,
\end{cases}
\end{equation}
where \(\chi_0\) denotes the trivial character of \(G\).
\subsection{Relative indices of lattices}

We recall the elementary notion of generalized index that will be used throughout
the paper. Let \(O\) be either \(\mathbb{Z}\) or \(\mathbb{Z}_p\), and let \(E\)
be its field of fractions, namely \(E=\mathbb{Q}\) or \(E=\mathbb{Q}_p\),
respectively. More generally, one may also work after extending scalars to
\(\mathbb{C}\) or \(\mathbb{C}_p\), depending on the chosen realization.

Let \(V\) be a finite-dimensional \(E\)-vector space. By a full \(O\)-lattice in
\(V\), we mean a finitely generated free \(O\)-submodule \(M\subset V\) such that
the natural map
\[
E\otimes_O M\longrightarrow V
\]
is an isomorphism.
If \(M\) and \(N\) are two full \(O\)-lattices in \(V\), their generalized index
is defined as follows. Choose an \(E\)-linear automorphism
\(
\alpha:V\longrightarrow V
\)
such that
\[
\alpha(M)=N.
\]
Then set
\[
(M:N)_O:=\mathrm {det}_E(\alpha)\,O.
\]
This %fractional ideal of \(O\) 
module does not depend on the choice of \(\alpha\). Thus
\((M:N)_O\) gives a well-defined measure of the relative position of the two
lattices.
When \(N\subseteq M\), this definition recovers the ordinary index:
\[
(M:N)_O=[M:N]\,O.
\]
We shall use this notation both in the classical situation, where one lattice is
contained in the other, and in the more general situation where no containment is
assumed.  For a detailed exposition on generalized indices and their properties, see for example \cite[\S 3]{SaadMazigh}.

\subsection{Higher special elements for global fields and the Rubin--Stark conjecture}
\label{Subsec:Rubin--Stark-global-fields}

We recall the Rubin--Stark elements in the setting needed below, as introduced by Rubin \cite[\S2]{Rubin} for number fields and Popescu \cite[\S1]{PopescuStark} for function fields.
Let \(K/k\) be
a finite abelian extension of global fields with Galois group
\[
G:=\mathrm{Gal}(K/k).
\]
Let \(S\) be a finite set of places of \(k\). In the number field case, we assume
that \(S\) contains the set \(S_\infty\) of archimedean places of \(k\). In the
function field case, we assume that \(S\) is non-empty. We state a more general form of the Rubin--Stark conjecture here. For this, we do not assume here
 that \(S\) necessarily contains the places of \(k\) which ramify in \(K/k\).

The logarithmic regulator is defined as
\[
\lambda_{K,S}:O_{K,S}^{\times}\longrightarrow \mathbb{R}X_{K,S},
\qquad
u\longmapsto -\sum_{w\in S_K}\log |u|_w\,w.
\]
It has finite kernel and its image is a full lattice in \(\mathbb{R}X_{K,S}\).
Therefore, after tensoring with \(\mathbb{R}\), it induces an isomorphism for any positive integer $r\in\mathbb{Z}_{\geq 0}$
\[
\widetilde{\lambda}_{K,S}:
\mathbb{R}\bigwedge_{\mathbb{Z}[G]}^r O_{K,S}^{\times}
\xrightarrow{\;\simeq\;}
\mathbb{R}\bigwedge_{\mathbb{Z}[G]}^r X_{K,S}.
\]

Let \(T\) be a finite set of places of \(k\), disjoint from \(S\) and from $\mathrm{Ram}(K/k)$. 
 Let \(O_{K,S,T}^{\times}\) be the group of \(S\)-units congruent to \(1\) at all places
above \(T\).
Since \(O_{K,S,T}^{\times}\) has finite
index in \(O_{K,S}^{\times}\), the inclusion
\(
O_{K,S,T}^{\times}\hookrightarrow O_{K,S}^{\times}
\)
induces a natural identification
\[
\mathbb{R}\bigwedge_{\mathbb{Z}[G]}^r O_{K,S,T}^{\times}
=
\mathbb{R}\bigwedge_{\mathbb{Z}[G]}^r O_{K,S}^{\times}.
\]
For \(\chi\in\widehat{G}\), let
\[
L_{S,T}(s,\chi):=L_{S}(s,\chi)\prod_{v\in T}(1-\chi(\sigma_v)Nv^{1-s})
\]
and observe that 
 the order of vanishing at \(s=0\) of \(L_{S,T}(s,\chi)\)  is independent of the auxiliary
set \(T\) and is equal to $r_S(\chi)$. 
We now fix a subset
\[
V=\{v_1,\dots,v_r\}\subset S
\]
consisting of places which split completely in \(K/k\). We also choose an
auxiliary place
\(
v_0\in S\setminus V,
\)
and write
\[
S=\{v_0,v_1,\dots,v_t\}.
\]
For each \(0\leq i\leq t\), fix a place \(w_i\) of \(K\) above \(v_i\).
Since every place in \(V\) splits completely in \(K/k\), it follows from \eqref{order-formula} that
\[
r_S(\chi)\geq r
\qquad
\text{for all }\chi\in\widehat{G}.
\]
We define the equivariant \(S\)-truncated Artin \(L\)-function by
\[
\Theta_{K/k,S,T}(s)
:=
\sum_{\chi\in\widehat{G}}L_{S,T}(s,\chi^{-1})e_\chi
\in \mathbb{C}[G].
\]
For every \(\chi\in\widehat{G}\), set
\[
L_{S,T}^*(0,\chi)
:=
\lim_{s\to 0}s^{-r_S(\chi)}L_{S,T}(s,\chi).
\]
The leading term of \(\Theta_{K/k,S}(s)\) at \(s=0\) is then
\[
\Theta_{K/k,S,T}^*(0)
:=
\sum_{\chi\in\widehat{G}}L_{S,T}^*(0,\chi^{-1})e_\chi.
\]
Let
\[
\Theta_{K/k,S,T}^{(r)}(0)
:=
\sum_{\chi\in\widehat{G}}
e_\chi
\lim_{s\to 0}s^{-r}L_{S,T}(s,\chi^{-1})
\in \mathbb{C}[G].
\]
Equivalently, if
\[
e_S^r:=
\sum_{\substack{\chi\in\widehat{G}\\ r_S(\chi)=r}}e_\chi,
\]
then
\[
\Theta_{K/k,S,T}^{(r)}(0)
=
e_S^r\Theta_{K/k,S,T}^*(0).
\]

\begin{definition}
The \(r\)-th order Rubin--Stark element associated with the data
\((K/k,S,T,V)\) is the element
\[
\eta_{K/k,S,T}^r
\in
\mathbb{R}\bigwedge_{\mathbb{Z}[G]}^r O_{K,S,T}^{\times}
\]
defined by
\[
\widetilde{\lambda}_{K,S}\bigl(\eta_{K/k,S,T}^r\bigr)
=
\Theta_{K/k,S,T}^{(r)}(0)\cdot
\bigwedge_{i=1}^r(w_i-w_0).
\]
\end{definition}

The element \(\eta_{K/k,S,T}^r\) is independent of the auxiliary choices of
\(v_0\) and \(w_0\), after the usual identifications.
We recall the definition of Rubin's lattice, also known as the exterior power bidual, associated with
\(O_{K,S,T}^{\times}\):
\[
\bigcap_{\mathbb{Z}[G]}^r O_{K,S,T}^{\times}
:=\Big(\bigwedge_{\mathbb{Z}[G]}^r (O_{K,S,T}^{\times})^*\Big)^*\simeq
\left\{
a\in
\mathbb{Q}\otimes_{\mathbb{Z}}
\bigwedge_{\mathbb{Z}[G]}^r O_{K,S,T}^{\times}
\;\middle|\;
\Phi(a)\in \mathbb{Z}[G]
\text{ for all }
\Phi\in
\bigwedge_{\mathbb{Z}[G]}^r (O_{K,S,T}^{\times})^*
\right\},
\]
where, for any $\mathbb{Z}[G]$-module $M$:
\[
M^*
:=
\mathrm{Hom}_{\mathbb{Z}[G]}
\left(
M,
\mathbb{Z}[G]
\right).
\]
In particular, in ranks \(0\) and \(1\) one has the natural identifications
\[
\bigcap_{\mathbb{Z}[G]}^0 O_{K,S,T}^{\times}
=
\mathbb{Z}[G],
\qquad
\bigcap_{\mathbb{Z}[G]}^1 O_{K,S,T}^{\times}
\cong
(O_{K,S,T}^{\times})_{\mathrm{tf}}.
\]
\begin{conjecture}[Rubin--Stark conjecture]
\textit{The Rubin--Stark element belongs to the associated Rubin's lattice:}
\[
\eta_{K/k,S,T}^r
\in
\bigcap_{\mathbb{Z}[G]}^r O_{K,S,T}^{\times}.
\]
\end{conjecture}
\begin{rem}\label{REMA_3_2}
If $S\supset \mathrm{Ram}(K/k)$, then the Rubin--Stark conjecture is known in several important cases. 
We recall some of them for context.
In the number field case, we further assume that $O^\times_{K,S,T}$ is torsion-free; this condition holds whenever $T$ contains two primes of different residue characteristics or a single prime of large enough absolute norm.

\begin{itemize}
    \item For global function fields, the conjecture is a theorem of Burns; more
    precisely, it follows from \cite[Thm.~A]{BurnsCongruences01}.

    \item In the number field case, the conjecture is known for arbitrary values
    of \(r\) when \(K\) is a finite abelian extension of \(\mathbb{Q}\) by \cite[Thm.~A]{BurnsCongruences01}. It is also known for certain abelian extensions
    of imaginary quadratic fields, through the validity of the equivariant
    Tamagawa number conjecture in these settings; see for instance
    \cite[Theorem~B]{BullachHofer}.

    \item In rank one $r=1$, the Rubin--Stark conjecture holds for finite abelian extensions of imaginary
    quadratic fields. In this case Rubin--Stark elements can be defined through elliptic units; see
    \cite[Chap.~IV, Prop.~3.9]{Tate84}.

    \item In the rank-zero CM setting, the relevant integrality statement is
    closely related to the work of Deligne and Ribet \cite{DR}.

    \item Further known cases include arbitrary quadratic extensions for all
    values of \(r\), as well as a large class of multiquadratic extensions in
    the rank-one case; see for example \cite[\S 3.1]{Buckingham}.
\end{itemize}
\end{rem}
In what follows, we will say that the rank-$r$ Rubin--Stark conjecture holds for $K/k$ and $S$ if we have
\(
\eta_{K/k,S,T}^r
\in
\bigcap_{\mathbb{Z}[G]}^r O_{K,S,T}^{\times}
\) for $K/k$, $S$, $r$ and all finite sets $T$.

Now, let \(K/k\) be a finite abelian extension of global fields, and let
\((S,T,V)\) be as above with
\[
V=\{v_1\}.
\]
\begin{proposition}
\label{Prop:RS-integrality-rank-one}
Assume that the Rubin--Stark conjecture holds for the data
\((K/k,S,T)\). Then the rank-one Rubin--Stark element
\(\eta_{K/k,S,T}^1\) satisfies
\[
\eta_{K/k,S,T}^1
\in
(O_{K,S,T}^{\times})_{\mathrm{tf}}
\subseteq
(O_{K,S}^{\times})_{\mathrm{tf}}.
\]
\end{proposition}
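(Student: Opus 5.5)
This is essentially a rank-one specialization of the conjecture. The plan is to identify Rubin's lattice in rank one with the torsion-free quotient, and then check that the inclusion of $T$-units into $S$-units is compatible with that identification.

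First, by hypothesis and the definition above, $\eta^1_{K/k,S,T}\in\bigcap^1_{\mathbb{Z}[G]}O^\times_{K,S,T}$. So the task is to make the identification $\bigcap^1_{\mathbb{Z}[G]}O^\times_{K,S,T}\cong (O^\times_{K,S,T})_{\mathrm{tf}}$ explicit as a subset of $\mathbb{Q}\otimes_{\mathbb{Z}}O^\times_{K,S,T}$. By definition, $\bigcap^1 M=M^{**}$ for $M=O^\times_{K,S,T}$, realized inside $\mathbb{Q}M$ as the set of $a$ with $\Phi(a)\in\mathbb{Z}[G]$ for all $\Phi\in M^*$.

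The key step is to show that for a finitely generated $\mathbb{Z}[G]$-module $M$, the natural map $M_{\mathrm{tf}}\to M^{**}$ is an isomorphism. Two facts drive this.

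\begin{enumerate}
\item Since $\mathbb{Z}[G]$ is a Gorenstein order, one has $\mathrm{Hom}_{\mathbb{Z}[G]}(M,\mathbb{Z}[G])\cong\mathrm{Hom}_{\mathbb{Z}}(M,\mathbb{Z})$ via $\Phi\mapsto$ (coefficient of $1$ in $\Phi$). The inverse sends $f$ to $m\mapsto\sum_{\sigma}f(\sigma^{-1}m)\sigma$.
\item Under this identification, $a\in\mathbb{Q}M$ satisfies $\Phi(a)\in\mathbb{Z}[G]$ for all $\Phi$ if and only if $f(a)\in\mathbb{Z}$ for all $f\in\mathrm{Hom}_{\mathbb{Z}}(M,\mathbb{Z})$. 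This reduction works because the coefficients of $\Phi(a)$ are the values $f(\sigma^{-1}a)$ for the $\mathbb{Z}$-linear functional $f$ attached to $\Phi$.
\end{enumerate}

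Since $M_{\mathrm{tf}}$ is a free $\mathbb{Z}$-module of finite rank, the $\mathbb{Z}$-bidual of $M_{\mathrm{tf}}$ inside $\mathbb{Q}M$ is exactly the image of $M_{\mathrm{tf}}$. Hence $\eta^1_{K/k,S,T}$ lies in the image of $(O^\times_{K,S,T})_{\mathrm{tf}}$. The element $\eta^1$ is a priori an element of $\mathbb{R}O^\times_{K,S,T}$ defined through $\widetilde\lambda_{K,S}$. The bidual condition, however, presupposes $\eta^1\in\mathbb{Q}M$, and this is part of what the conjecture's membership statement asserts.

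Finally, the inclusion $O^\times_{K,S,T}\hookrightarrow O^\times_{K,S}$ has finite cokernel. It induces a map $(O^\times_{K,S,T})_{\mathrm{tf}}\to(O^\times_{K,S})_{\mathrm{tf}}$, and this map is injective because a unit that is torsion in $O^\times_{K,S}$ is torsion in the subgroup. This map is compatible with the identification $\mathbb{R}O^\times_{K,S,T}=\mathbb{R}O^\times_{K,S}$ fixed above, and that compatibility gives the stated inclusion.

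The only delicate point is bookkeeping rather than substance. One has to check that the identification in the definition of Rubin's lattice is the one induced by $M\to\mathbb{Q}M$, which kills the torsion, and that the Gorenstein duality step is stated correctly for modules that may have torsion. I would handle this by working throughout with $M_{\mathrm{tf}}$, noting that $M^*=(M_{\mathrm{tf}})^*$ since $\mathbb{Z}[G]$ is torsion-free.
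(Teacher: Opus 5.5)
Your proof is correct, but it establishes the key step differently from the paper.

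The paper identifies $\bigcap^1_{\mathbb{Z}[G]}O^\times_{K,S,T}$ with $(O^\times_{K,S,T})^{**}$ and notes that taking $\mathbb{Z}[G]$-duals kills $\mathbb{Z}$-torsion. It then appeals to general commutative algebra: $\mathbb{Z}[G]$ is a Gorenstein ring of Krull dimension one, so the $\mathbb{Z}$-torsion-free lattice $(O^\times_{K,S,T})_{\mathrm{tf}}$ is reflexive, citing Vasconcelos.

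You instead write down the isomorphism $\mathrm{Hom}_{\mathbb{Z}[G]}(M,\mathbb{Z}[G])\cong\mathrm{Hom}_{\mathbb{Z}}(M,\mathbb{Z})$ explicitly, with inverse $f\mapsto\bigl(m\mapsto\sum_\sigma f(\sigma^{-1}m)\sigma\bigr)$. This turns the $\mathbb{Z}[G]$-bidual condition into the $\mathbb{Z}$-bidual condition, which trivially cuts out the image of $M_{\mathrm{tf}}$ in $\mathbb{Q}\otimes_{\mathbb{Z}}M$.

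Your route is self-contained and works for any finite group $G$, with no commutativity or dimension theory needed. It also matches the paper's concrete description of Rubin's lattice as a subset of $\mathbb{Q}\otimes_{\mathbb{Z}}M$. The paper's route is shorter, but it outsources reflexivity to a reference.

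One small correction of attribution: the isomorphism in your step 1 does not follow from the Gorenstein property alone. For a Gorenstein order, $\mathrm{Hom}_{\mathbb{Z}}(\Lambda,\mathbb{Z})$ need only be projective or invertible, not free. What you are using is that $\mathbb{Z}[G]$ is a \emph{symmetric} order: the trace form $\sum_\sigma a_\sigma\sigma\mapsto a_1$ identifies $\mathbb{Z}[G]$ with $\mathrm{Hom}_{\mathbb{Z}}(\mathbb{Z}[G],\mathbb{Z})$ as $G$-modules. Since you verify the inverse map directly, this does not affect the validity of the argument.
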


\begin{proof}
By the Rubin--Stark conjecture for the data \((K/k,S,T,V)\), one has
\[
\eta_{K/k,S,T}^1
\in
\bigcap_{\mathbb Z[G]}^1 O_{K,S,T}^{\times}.
\]
By definition, in rank one,
\[
\bigcap_{\mathbb Z[G]}^1 O_{K,S,T}^{\times}
=
\left(O_{K,S,T}^{\times}\right)^{**},
\]
where the dual is taken over \(\mathbb Z[G]\). 
Observe that the ring
\(\mathbb Z[G]\) is a Gorenstein order, see e.g.~\cite[p.~779]{CR1}, and it has Krull dimension $1$. 
The \(\mathbb Z[G]\)-lattice
\((O_{K,S,T}^{\times})_{\mathrm{tf}}\) is $\mathbb{Z}$-torsion-free, and therefore \(\mathbb{Z}[G]\)-reflexive by \cite[p.~1349]{Vasconcelos}. Hence the natural
map
\[
(O_{K,S,T}^{\times})_{\mathrm{tf}}
\longrightarrow
\left((O_{K,S,T}^{\times})_{\mathrm{tf}}\right)^{**}
\]
is an isomorphism. Moreover, since \(\mathbb Z[G]\) is \(\mathbb Z\)-torsion-free,
taking \(\mathbb Z[G]\)-duals kills the \(\mathbb Z\)-torsion of
\(O_{K,S,T}^{\times}\), and hence
\[
\left(O_{K,S,T}^{\times}\right)^{**}
=
\left((O_{K,S,T}^{\times})_{\mathrm{tf}}\right)^{**}.
\]
It follows that
\[
\bigcap_{\mathbb Z[G]}^1 O_{K,S,T}^{\times}
=
(O_{K,S,T}^{\times})_{\mathrm{tf}}.
\]
Therefore
\[
\eta_{K/k,S,T}^1
\in
(O_{K,S,T}^{\times})_{\mathrm{tf}}.
\]

Finally, the inclusion
\[
O_{K,S,T}^{\times}\subseteq O_{K,S}^{\times}
\]
induces an injection on torsion-free quotients
\[
(O_{K,S,T}^{\times})_{\mathrm{tf}}
\hookrightarrow
(O_{K,S}^{\times})_{\mathrm{tf}}.
\]
This proves the proposition.\qed
\end{proof}

\subsection{Rank-one generalized Stickelberger modules}
The $S$-logarithmic regulator
\[
\lambda_{K,S}:O_{K,S}^{\times}\longrightarrow \mathbb{R}X_{K,S},
\qquad
u\longmapsto -\sum_{w\in S_K}\log |u|_w\,w.
\]
induces, 
after extension of scalars,  a
\(\mathbb{C}[G]\)-linear isomorphism
\[
\widetilde{\lambda}_{K,S}:
\mathbb{C} O_{K,S}^{\times}
\xrightarrow{\;\simeq\;}
\mathbb{C} X_{K,S}.
\]
Explicitly,
\[
\widetilde{\lambda}_{K,S}(z\otimes u)
=
z\otimes \lambda_{K,S}(u),
\qquad
z\in \mathbb{C},\quad u\in O_{K,S}^{\times}.
\]

We consider the following module
\[
\vartheta_{K/k,S}(0)
:=
\mathrm{Ann}_{\mathbb{Z}[G]}(\mu(K))\,
\Theta_{K/k,S}^{*}(0)
\]
and then define the
generalized Stickelberger module by
\[
\varrho_{K/k,S}
:=
\widetilde{\lambda}_{K,S}^{-1}
\left(
\vartheta_{K/k,S}(0)X_{K,S}
\right)
\subseteq
\mathbb{C}O_{K,S}^{\times}.
\]

Equivalently,
\[
\varrho_{K/k,S}
=
\left\{
x\in \mathbb{C} O_{K,S}^{\times}
\;\middle|\;
\widetilde{\lambda}_{K,S}(x)
\in
\vartheta_{K/k,S}(0)X_{K,S}
\right\}.
\]

Let \(p\) be a rational prime. For each such prime $p$, we fix once and for all an isomormhism
\[\mathbb{C}\simeq \mathbb{C}_p\]
through which we can see all the previously defined objects $p$-adically.
For our next result, let $S_p$ denote set of places of $k$ comprised of the places in $S$, the places in ${\mathrm{Ram}}(K/k)$ and the places above $p$.
If $T$ is any finite set of places of $k$ disjoint from the set of places which ramify in $K/k$, then we set
\[\delta_{K/k,T}:=\prod_{v\in T}(1-\sigma_v^{-1}\mathrm{N}v)\]
where $\sigma_v$ is the Frobenius automorphism in $G$. The rank-one Rubin--Stark conjecture provides the following description of the generalized Stickelberger module.
\begin{lemma}
\label{Lem:rho-generated-by-rank-one-RS-elements}
Assume that the rank-one Rubin--Stark conjecture holds for $K/k$ and $S$.
Then
\[
e_S^1\mathbb Z_p\varrho_{K/k,S}
\subseteq
e_S^1\mathbb Z_p(O_{K,S}^{\times})_{\mathrm{tf}}.
\]
\end{lemma}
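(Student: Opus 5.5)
The plan is to reduce the statement to the rank-one Rubin--Stark elements (Proposition~\ref{Prop:RS-integrality-rank-one}), one depletion place at a time, after two reductions: first replace the annihilator $\mathrm{Ann}_{\mathbb Z[G]}(\mu(K))$ by the elements $\delta_{K/k,T}$, then cut $e_S^1$ into pieces attached to individual places of $S$.

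\emph{First reduction (the annihilator).} I would use the classical fact, going back to Coates and Tate \cite[Ch.~IV, Lemme~1.1]{Tate84}, that after tensoring with $\mathbb Z_p$ the ideal $\mathbb Z_p\mathrm{Ann}_{\mathbb Z[G]}(\mu(K))$ is generated by the elements $\delta_{K/k,T}$. Here $T$ runs over finite sets of places of $k$ disjoint from $S_p$, and by Chebotarev one may take $T$ to consist of a single place. For such $T$ one has
\[
\delta_{K/k,T}\,\Theta^*_{K/k,S}(0)=\Theta^*_{K/k,S,T}(0),
\]
because $\delta_{K/k,T}e_\chi=\prod_{v\in T}(1-\chi^{-1}(\sigma_v)Nv)\,e_\chi$ is the value at $s=0$ of the $T$-modification of $L_S(s,\chi^{-1})$, which does not vanish. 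Multiplying by $e_S^1$ then gives
\[
e_S^1\,\delta_{K/k,T}\,\Theta^*_{K/k,S}(0)=\Theta^{(1)}_{K/k,S,T}(0).
\]
So it suffices to show that, for every such $T$ and every $x\in X_{K,S}$,
\[
\widetilde\lambda_{K,S}^{-1}\bigl(\Theta^{(1)}_{K/k,S,T}(0)\,x\bigr)\in e_S^1\,\mathbb Z_p(O_{K,S}^\times)_{\rm tf}.
\]

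\emph{Second reduction (splitting $e_S^1$).} By \eqref{order-formula}, a character $\chi\neq\chi_0$ has $r_S(\chi)=1$ exactly when there is a unique place $v\in S$ with $\chi(D_v)=1$. The trivial character occurs in $e_S^1$ only when $|S|=2$. I would therefore write $e_S^1=\sum_{v\in S}e_{S,v}$, where $e_{S,v}$ collects the characters with distinguished place $v$, with a separate bookkeeping of $\chi_0$. Fix $v\in S$, put $K_v:=K^{D_v}$, in which $v$ splits completely, and let $w_v\mid v$ and $w_0$ lie over some other place of $S$. The component $e_{S,v}\,e_\chi X_{K,S}$ is spanned by $e_\chi(w_v-w_0)$, since $e_\chi w'=0$ for every $w'$ above a place $v'\neq v$ with $\chi(D_{v'})\neq1$. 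Hence the problem reduces to elements of the form $\Theta^{(1)}_{K/k,S,T}(0)\,e_{S,v}\,y\,(w_v-w_0)$ with $y\in\mathbb Z[G]$. These characters factor through $\mathrm{Gal}(K_v/k)$, so under the inclusion $O_{K_v,S}^\times\subseteq O_{K,S}^\times$ the component $e_{S,v}\,\Theta^{(1)}_{K/k,S,T}(0)(w_v-w_0)$ is identified with (a projection of) the image of the rank-one Rubin--Stark element $\eta^1_{K_v/k,S,T}$ for $V=\{v\}$. This requires checking that $\widetilde\lambda$ commutes with inflation, i.e. that the regulator of $K$ restricted to $K_v$ matches that of $K_v$ up to the norm map $N_{D_v}$ on places. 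By Proposition~\ref{Prop:RS-integrality-rank-one}, that element lies in $(O_{K_v,S,T}^\times)_{\rm tf}\subseteq(O_{K,S}^\times)_{\rm tf}$. Applying $e_S^1$ and $\mathbb Z_p$-linearity gives the claimed inclusion.

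\emph{Main obstacle.} I expect the hardest step to be the compatibility in the second reduction. There are two issues. First, the hypothesis ``rank-one Rubin--Stark for $K/k$ and $S$'' has to be used for the subextensions $K_v/k$, or equivalently through the functorial behaviour of Rubin--Stark elements under restriction. Second, the idempotent $e_{S,v}$ need not be integral at $p$ when $p\mid|G|$, so I cannot simply project. What I would try first is to avoid projecting at all: check that $e_S^1\,\eta^1_{K_v/k,S,T}=e_{S,v}\,\eta^1_{K_v/k,S,T}$ already, since the other characters of $\mathrm{Gal}(K_v/k)$ have $r_S\ge2$ and are killed by $\Theta^{(1)}$. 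Then only the $e_S^1$-projection of a genuine unit appears, which is exactly the shape of the right-hand side. The trivial character when $|S|=2$ is handled in the same way, with $K_v=k$ and the class number formula for $k$.
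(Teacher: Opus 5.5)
Your first reduction (replacing $\mathbb Z_p\mathrm{Ann}_{\mathbb Z[G]}(\mu(K))$ by the $\delta_{K/k,T}$ and using $e_S^1\delta_{K/k,T}\Theta^*_{K/k,S}(0)=\Theta^{(1)}_{K/k,S,T}(0)$) is the paper's first step. The gap is in the second reduction. It misses the observation that finishes the proof, and the machinery you put in its place would not work.

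\textbf{The missing observation.} The hypothesis ``rank-one Rubin--Stark for $K/k$ and $S$'' only makes sense with $V=\{v_1\}$, where $v_1\in S$ splits completely in $K/k$; this is how $\eta^1_{K/k,S,T}$ is defined. Then $\chi(D_{v_1})=1$ for every $\chi$. By \eqref{order-formula}, $v_1$ is therefore the \emph{unique} place of $S$ with $\chi(D_v)=1$ for every non-trivial $\chi$ occurring in $e_S^1$. So your decomposition has the single piece $v=v_1$, with $K_{v_1}=K$. Concretely, $e_S^1w=0$ for $w$ above $v\notin\{v_0,v_1\}$, and $e_S^1(\sigma-1)w_0=0$ for $\sigma\in G$. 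Hence, integrally,
\[
e_S^1X_{K,S}=\mathbb Z[G]\,e_S^1(w_1-w_0),
\]
and this already contains the $\chi_0$-component when $|S|=2$. It follows that $\mathbb Z_p\,e_S^1\vartheta_{K/k,S}(0)X_{K,S}$ is generated over $\mathbb Z_p[G]$ by the elements $\Theta^{(1)}_{K/k,S,T}(0)(w_1-w_0)=\widetilde\lambda_{K,S}(\eta^1_{K/k,S,T})$. This is the implicit step behind the paper's identification with the span of the $\eta^1_{K/k,S,T}$. Proposition~\ref{Prop:RS-integrality-rank-one} then concludes, since $e_S^1\eta^1_{K/k,S,T}=\eta^1_{K/k,S,T}$. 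No subextensions, no splitting of idempotents, and no separate treatment of $\chi_0$ are needed.

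\textbf{Why your route fails.} For a place $v$ with $D_v\neq 1$, the identification with $\eta^1_{K_v/k,S,T}$ is off by a factor $|D_v|$, and your ``avoid projecting'' fix does not address this. On $O^\times_{K_v,S}\subseteq O^\times_{K,S}$ one has $\lambda_{K,S}=j\circ\lambda_{K_v,S}$ with $j(w')=N_{D_v}w$ for $w\mid w'$. Since $D_v$ fixes $w_v$, we get $N_{D_v}w_v=|D_v|\,w_v$, so
\[
\widetilde\lambda_{K,S}^{-1}\bigl(e_{S,v}\Theta^{(1)}_{K/k,S,T}(0)(w_v-w_0)\bigr)=|D_v|^{-1}\,\eta^1_{K_v/k,S,T}.
\]
Integrality of $\eta^1_{K_v/k,S,T}$ therefore gives nothing when $p\mid|D_v|$, which is exactly the non-semisimple case. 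Likewise, treating $\chi_0$ separately through $k$ loses a factor $|G|$. Moreover, Rubin--Stark for $K/k$ with $V=\{v_1\}$ says nothing about $K_v/k$ with $V=\{v\}$ for $v\neq v_1$, since functoriality only descends with the same $V$. So the general-$S$ statement your decomposition aims at is not reachable this way, and in the lemma's actual setting the decomposition is vacuous.
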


\begin{proof}
By the same argument as in
 \cite[Lemma 4.2]{El26}  or \cite[Lemma 6.9(2)]{GreitherPopescu} we get a description of the annihilator of the $p$-part of roots of unity as

{}

    \[
    \mathrm{Ann}_{\mathbb{Z}_p[G]}(\mu(K)\otimes_\mathbb{Z}\mathbb{Z}_p)=\langle \delta_{K/k,T}\;|\; T\neq \varnothing,\; T\cap S_p=\varnothing\rangle_{\mathbb{Z}_p[G]}.
    \]

This and the definition of rank-one Rubin--Stark elements (see \S \ref{Subsec:Rubin--Stark-global-fields}) yield

{}

    \[
    (\tilde{\lambda}_{K,S})^{-1}(\varepsilon_1^S\vartheta_{K/k,S}(0)X_{K,S})=\langle \eta^1_{K/k,S,T}\;|\; T\neq \varnothing,\; T\cap S_p=\varnothing\rangle_{\mathbb{Z}_p[G]},
    \]

{}

The result follows by Proposition \ref{Prop:RS-integrality-rank-one}.
\qed
\end{proof}

\begin{corollary}
\label{Cor:eS-index-well-defined}
Under the hypotheses of Lemma \ref{Lem:rho-generated-by-rank-one-RS-elements},
the classical index
\[
\left[
e_S^1\mathbb Z_p(O_{K,S}^{\times})_{\mathrm{tf}}
:
e_S^1\mathbb Z_p\varrho_{K/k,S}
\right]
\]
is well defined and belongs to \(\mathbb Z_{\geq 1}\).
\end{corollary}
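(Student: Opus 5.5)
The plan is to deduce the corollary from Lemma~\ref{Lem:rho-generated-by-rank-one-RS-elements}. We show that both modules are full $\mathbb Z_p$-lattices in the same finite-dimensional space, namely $e_S^1\mathbb C_p O_{K,S}^{\times}$ viewed inside $\mathbb C_p\otimes(O_{K,S}^\times)_{\mathrm{tf}}$ via the fixed isomorphism $\mathbb C\simeq\mathbb C_p$. The lemma gives the containment, so the generalized index of \S2.2 becomes a classical index, which is a positive integer.

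\textbf{Step 1: the larger module.} The group $(O_{K,S}^{\times})_{\mathrm{tf}}$ is a finitely generated free $\mathbb Z$-module, so $\mathbb Z_p(O_{K,S}^{\times})_{\mathrm{tf}}$ is free of finite rank over $\mathbb Z_p$. Its image under $e_S^1$ is a finitely generated $\mathbb Z_p$-submodule of a $\mathbb C_p$-vector space. It is torsion-free, hence free, because $e_S^1$ has coefficients in $\frac1{|G|}\mathbb Z_p[\mu_{|G|}]$, and we may adjoin these roots of unity to the coefficients if necessary. It spans $e_S^1\mathbb C_p O_{K,S}^\times$.

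\textbf{Step 2: the smaller module.} The module $\mathrm{Ann}_{\mathbb Z[G]}(\mu(K))$ contains $w_K$, which is a nonzerodivisor. Moreover $e_S^1\Theta^*_{K/k,S}(0)$ is invertible in $e_S^1\mathbb C[G]$, since each $L^*_S(0,\chi)\neq0$. Since $\widetilde\lambda_{K,S}$ is an isomorphism, the module $e_S^1\mathbb Z_p\varrho_{K/k,S}$ therefore spans the same $\mathbb C_p$-space $e_S^1\mathbb C_p O_{K,S}^{\times}$. It is finitely generated because $X_{K,S}$ and the annihilator ideal are.

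\textbf{Step 3: the index.} By Lemma~\ref{Lem:rho-generated-by-rank-one-RS-elements}, the second module is contained in the first. A full-rank submodule of a free $\mathbb Z_p$-module of finite rank has finite index, and that index is a power of $p$, in particular an element of $\mathbb Z_{\ge1}$. By the remark in \S2.2, this index agrees with the generalized index $(\cdot:\cdot)_{\mathbb Z_p}$ up to units.

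The main obstacle is the equality of spans. The two modules must be compared inside one ambient vector space over a single ring of coefficients. The key point is that they have the same $\mathbb Z_p$-rank, which comes from the nonvanishing of the leading terms on the $e_S^1$-component. Once this is checked, finiteness of the quotient is immediate.
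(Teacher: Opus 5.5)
Your argument is correct and is essentially what the paper intends: the paper states the corollary without a separate proof, as an immediate consequence of the containment in Lemma~\ref{Lem:rho-generated-by-rank-one-RS-elements} together with the fact that both modules are full $\mathbb Z_p$-lattices in the $e_S^1$-component. One small cleanup: $e_S^1$ already lies in $\mathbb Q[G]$, because the condition $r_S(\chi)=1$ depends only on $\ker\chi$ and is therefore stable under Galois conjugation of characters, so no roots of unity need to be adjoined; this rationality is exactly what makes the $\mathbb Z_p$-rank of $e_S^1\mathbb Z_p(O_{K,S}^{\times})_{\mathrm{tf}}$ equal to $\dim_{\mathbb C_p}e_S^1\mathbb C_pO_{K,S}^{\times}$, which your rank comparison in Step~3 relies on.
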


\section{The rank-one component} \label{section:rank-one}
We now focus on the situation in which every non-trivial character has order of
vanishing one at \(s=0\). This is the range in which the generalized
Stickelberger module \(\varrho_{K/k,S}\) is governed by the first derivatives of
the Artin \(L\)-functions, and hence by rank-one Rubin--Stark elements.

More precisely, we assume that there exists a place
\[
\mathfrak v\in S
\]
which splits completely in \(K/k\), and that every place in
\[
S\setminus\{\mathfrak v\}
\]
does not decompose in \(K/k\) i.e., has decomposition group equal to
\(G\). We put
\[
S':=\{\mathfrak v\},
\qquad
m:=|S\setminus S'|.
\]
Then, for every non-trivial character \(\chi\in\widehat G\), one has
\begin{equation} \label{r-S-chi}
r_S(\chi)=1,
\end{equation}
whereas
\[
r_S(\chi_0)=|S|-1=m.
\]
Consequently,
\begin{equation} \label{e-S-1}
e_S^1=
\begin{cases}
1, & \text{if }m=1,\\[0.3em]
1-e_{\chi_0}, & \text{if }m\neq 1.
\end{cases}
\end{equation}
We shall treat the cases \(|S|=1\) and \(|S|=2\) separately (in \S\ref{subsection:S=1} and \S\ref{subsection:S=2}, respectively). Then we proceed to explain
why the same method becomes less effective when \(|S|\geq 3\) (see \S\ref{subsection:S>=3} and \S\ref{section:conclusion}).
If \(F\) is a finite extension of \(k\), we use the convention
\[
R_{F,S}
:=
\left|
\left(
X_{F,S}:
\lambda_{F,S}\bigl((O_{F,S}^{\times})_{\mathrm{tf}}\bigr)
\right)_{\mathbb Z}
\right|,
\]
with \(R_{F,S}=1\) when \(O_{F,S}^{\times}\) has rank zero. 
Then we get the following.
\begin{proposition}
\label{Prop:generalized-index-formula-rank-one}
The generalized index
\[
\left(
(O_{K,S}^{\times})_{\mathrm{tf}}:
\varrho_{K/k,S}
\right)_{\mathbb Z}
\]
is well defined, and one has
\[
\left(
(O_{K,S}^{\times})_{\mathrm{tf}}:
\varrho_{K/k,S}
\right)_{\mathbb Z}
=
\frac{h_{K,S}}{w_K}
\left(
\frac{h_{k,S}R_{k,S}}{w_k}
\right)^{|S\setminus S'|-1}
\left[
X_{K,S}:
\mathrm{Ann}_{\mathbb Z[G]}(\mu(K))X_{K,S}
\right]\mathbb Z.
\]
In particular:
\begin{itemize}
\item If \(|S|=1\), then
\[
\left(
(O_{K,S}^{\times})_{\mathrm{tf}}:
\varrho_{K/k,S}
\right)_{\mathbb Z}
=
\frac{h_{K,S}w_k}{h_{k,S}w_K}
\left[
X_{K,S}:
\mathrm{Ann}_{\mathbb Z[G]}(\mu(K))X_{K,S}
\right]\mathbb Z.
\]

\item If \(|S|=2\), then
\[
\left(
(O_{K,S}^{\times})_{\mathrm{tf}}:
\varrho_{K/k,S}
\right)_{\mathbb Z}
=
\frac{h_{K,S}}{w_K}
\left[
X_{K,S}:
\mathrm{Ann}_{\mathbb Z[G]}(\mu(K))X_{K,S}
\right]\mathbb Z.
\]
\end{itemize}
\end{proposition}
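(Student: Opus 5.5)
Under the splitting hypothesis of this section, every character has $r_S(\chi)\ge 1$, so the logarithmic regulator together with $\Theta^*_{K/k,S}(0)$ identifies everything inside $\mathbb C X_{K,S}$. The plan is therefore to factor the generalized index through $X_{K,S}$ and compute each factor as a determinant over $\mathbb C$. By multiplicativity of generalized indices,
\[
\left((O_{K,S}^{\times})_{\mathrm{tf}}:\varrho_{K/k,S}\right)_{\mathbb Z}
=\left(\lambda_{K,S}((O_{K,S}^{\times})_{\mathrm{tf}}):X_{K,S}\right)
\left(X_{K,S}:\Theta^*_{K/k,S}(0)X_{K,S}\right)
\left(\Theta^*_{K/k,S}(0)X_{K,S}:\vartheta_{K/k,S}(0)X_{K,S}\right),
\]
where $\widetilde\lambda_{K,S}$ transports the left-hand index into $\mathbb C X_{K,S}$. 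Well-definedness amounts to checking that $\vartheta_{K/k,S}(0)X_{K,S}$ is a full lattice. This holds because $\Theta^*_{K/k,S}(0)$ is a unit of $\mathbb C[G]$, each $L^*_S(0,\chi)$ being nonzero, and because $\mathrm{Ann}_{\mathbb Z[G]}(\mu(K))$ contains $w_K$, so it has finite index in $\mathbb Z[G]$. Since $G$ is abelian and $X_{K,S}$ is a $G$-module, $\vartheta X=\Theta^*\cdot(\mathrm{Ann}\,X)$.

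The first factor is $R_{K,S}^{-1}$. The third factor is the ordinary index $[X_{K,S}:\mathrm{Ann}_{\mathbb Z[G]}(\mu(K))X_{K,S}]$, because multiplication by the invertible $\Theta^*$ transports the pair $(X,\mathrm{Ann}\,X)$ isomorphically. The middle factor is $\det_{\mathbb C}(\Theta^*_{K/k,S}(0)\mid \mathbb C X_{K,S})$. I will decompose $\mathbb C X_{K,S}=\bigoplus_\chi e_\chi\mathbb C X_{K,S}$ and compute the dimensions. Because $\mathfrak v$ splits completely, $\mathbb C Y_{\{\mathfrak v\},K}\cong\mathbb C[G]$. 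Each $v\in S\setminus S'$ is non-decomposed and so contributes only the trivial character. Hence $\dim e_\chi\mathbb C X=1$ for $\chi\ne\chi_0$ and $\dim e_{\chi_0}\mathbb C X=|S|-1=m$, which gives
\[
\det=\prod_{\chi\neq\chi_0}L^*_S(0,\chi^{-1})\cdot L^*_S(0,\chi_0)^{m}
=\zeta^*_{K,S}(0)\,\zeta^*_{k,S}(0)^{m-1},
\]
using $\zeta_{K,S}=\prod_\chi L_S(s,\chi)$ at the level of leading terms. Applying the class number formula to both zeta functions yields
\[
\pm\frac{h_{K,S}R_{K,S}}{w_K}\Bigl(\frac{h_{k,S}R_{k,S}}{w_k}\Bigr)^{m-1}.
\]
Multiplying by $R_{K,S}^{-1}$ cancels the regulator of $K$ and leaves exactly the stated formula, with $m=|S\setminus S'|$ and signs absorbed into the ideal $\mathbb Z$. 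The special cases then follow directly. For $|S|=1$ we have $m=0$, and $R_{k,S}=1$ because $O_{k,S}^\times$ has rank $0$, giving the factor $w_k/h_{k,S}$. For $|S|=2$ we have $m=1$.

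The main obstacle is the bookkeeping in the middle factor. I must verify that $\det_{\mathbb C}$ of multiplication by $\sum_\chi L^*(0,\chi^{-1})e_\chi$ on $\mathbb C X_{K,S}$ pairs each $L^*(0,\chi^{-1})$ with the multiplicity of $\chi$ in $\mathbb C X_{K,S}$. The inversion $\chi\leftrightarrow\chi^{-1}$ is harmless because it permutes the nontrivial characters. I also need to make sure that the leading term of the product $\zeta_{K,S}$ really equals the product of the characters' leading terms; this holds since leading coefficients multiply. Finally, the well-definedness of the generalized index, given that $\varrho$ is only a lattice in $\mathbb C O^\times$ rather than a rational one, should be handled through the convention that the index is computed via $\det$ of a $\mathbb C$-linear map carrying one lattice to the other, taking absolute values.
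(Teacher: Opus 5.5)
Your proposal is correct and follows the paper's proof essentially step for step. You transport the index via $\widetilde{\lambda}_{K,S}$ and split it into three factors: $R_{K,S}^{-1}$, then $\det(\Theta^{*}_{K/k,S}(0)\mid\mathbb C X_{K,S})=\pm\zeta^{*}_{K,S}(0)\,\zeta^{*}_{k,S}(0)^{m-1}$ from the character-dimension count, then $[X_{K,S}:\mathrm{Ann}_{\mathbb Z[G]}(\mu(K))X_{K,S}]$; the class number formula finishes it. One small inaccuracy: your opening claim that every $\chi$ has $r_S(\chi)\geq 1$ fails for $\chi_0$ when $|S|=1$, but it plays no role in the argument.
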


\begin{proof}
The argument follows the proof of \cite[Theorem~3.6(1)]{El26}, with the
\(S\)-modified objects kept throughout. By definition,
\[
\varrho_{K/k,S}
=
\widetilde{\lambda}_{K,S}^{-1}
\left(
\vartheta_{K/k,S}(0)X_{K,S}
\right),
\]
where
\[
\vartheta_{K/k,S}(0)
=
\mathrm{Ann}_{\mathbb Z[G]}(\mu(K))\Theta_{K/k,S}^{*}(0).
\]
Since \(\widetilde{\lambda}_{K,S}\) is an isomorphism, the generalized index is well defined and
\[
\left(
(O_{K,S}^{\times})_{\mathrm{tf}}:
\varrho_{K/k,S}
\right)_{\mathbb Z}
=
\left(
\lambda_{K,S}\bigl((O_{K,S}^{\times})_{\mathrm{tf}}\bigr):
\vartheta_{K/k,S}(0)X_{K,S}
\right)_{\mathbb Z}.
\]
Put
\(
A:=\mathrm{Ann}_{\mathbb Z[G]}(\mu(K)).
\)
By multiplicativity of generalized indices, we have
\[
\begin{aligned}
(
\lambda_{K,S}&\bigl((O_{K,S}^{\times})_{\mathrm{tf}}\bigr):
\vartheta_{K/k,S}(0)X_{K,S}
)_{\mathbb Z}\\
=&
\left(
\lambda_{K,S}\bigl((O_{K,S}^{\times})_{\mathrm{tf}}\bigr):
X_{K,S}
\right)_{\mathbb Z}        
\cdot
\left(
X_{K,S}:\Theta_{K/k,S}^{*}(0)X_{K,S}
\right)_{\mathbb Z}        
\cdot
\left(
\Theta_{K/k,S}^{*}(0)X_{K,S}:\Theta_{K/k,S}^{*}(0)AX_{K,S}
\right)_{\mathbb Z}.
\end{aligned}
\]
Since multiplication by $\Theta_{K/k,S}^{*}(0)$ induces an automorphism of the $\mathbb{C}$-vector space $\mathbb{C}X_{K,S}$, the last factor is equal to
\[
\left(
X_{K,S}:AX_{K,S}
\right)_{\mathbb Z}=[X_{K,S}:AX_{K,S}]\mathbb{Z}.
\]

The first factor is the inverse of the \(S\)-regulator of \(K\):
\[
\left(
\lambda_{K,S}\bigl((O_{K,S}^{\times})_{\mathrm{tf}}\bigr):X_{K,S}
\right)_{\mathbb Z}
=
\frac{1}{R_{K,S}}\mathbb Z.
\]

It remains to compute $\left(
X_{K,S}:\Theta_{K/k,S}^{*}(0)X_{K,S}
\right)_{\mathbb Z}$, which, by definition, is generated by
the determinant of multiplication by
\(\Theta_{K/k,S}^{*}(0)\) on \(\mathbb C X_{K,S}\). Since \(\mathfrak v\)
splits completely in \(K/k\), and since every place in \(S\setminus S'\) does
not decompose in \(K/k\), we have
\[
\dim_{\mathbb C}(e_{\chi_0}\mathbb C X_{K,S})=|S\setminus S'|
\]
and
\[
\dim_{\mathbb C}(e_\chi\mathbb C X_{K,S})=1
\qquad
\text{for every }\chi\neq \chi_0.
\]
Therefore multiplication by \(\Theta_{K/k,S}^{*}(0)\) on
\(\mathbb C X_{K,S}\) has determinant
\[
\zeta_{K,S}^{*}(0)
\left(\zeta_{k,S}^{*}(0)\right)^{|S\setminus S'|-1},
\]
up to sign. Hence
\[
\left(
X_{K,S}:
\Theta_{K/k,S}^{*}(0)X_{K,S}
\right)_{\mathbb Z}
=
\zeta_{K,S}^{*}(0)
\left(\zeta_{k,S}^{*}(0)\right)^{|S\setminus S'|-1}
\mathbb Z.
\]
By the \(S\)-class number formulae,
\[
\zeta_{K,S}^{*}(0)=\pm \frac{h_{K,S}R_{K,S}}{w_K}
\qquad
\text{and}
\qquad
\zeta_{k,S}^{*}(0)=\pm \frac{h_{k,S}R_{k,S}}{w_k}.
\]
Thus
\[
R_{K,S}^{-1}
\left(
X_{K,S}:
\Theta_{K/k,S}^{*}(0)X_{K,S}
\right)_{\mathbb Z}
=
\frac{h_{K,S}}{w_K}
\left(
\frac{h_{k,S}R_{k,S}}{w_k}
\right)^{|S\setminus S'|-1}
\mathbb Z.
\]
Combining this with the previous index decomposition gives
\[
\left(
(O_{K,S}^{\times})_{\mathrm{tf}}:
\varrho_{K/k,S}
\right)_{\mathbb Z}
=
\frac{h_{K,S}}{w_K}
\left(
\frac{h_{k,S}R_{k,S}}{w_k}
\right)^{|S\setminus S'|-1}
\left[
X_{K,S}:AX_{K,S}
\right]\mathbb Z.
\]
This proves the general formula.
If \(|S|=1\), then \(|S\setminus S'|=0\), and \(R_{k,S}=1\) since
\(X_{k,S}=0\). This gives the first special case. If \(|S|=2\), then
\(|S\setminus S'|=1\), and the second special case follows directly.\qed
\end{proof}

The only unspecified factor remaining in Proposition
\ref{Prop:generalized-index-formula-rank-one} is the index
\[
\left[
X_{K,S}:
\mathrm{Ann}_{\mathbb Z[G]}(\mu(K))X_{K,S}
\right].
\]
We compute it explicitly as follows.

\begin{proposition}
\label{Prop:XKS-annihilator-index-two-cases}
With the previous notations and assumptions, the following hold.
\begin{enumerate}
\item If \(|S|=1\), then
\[
\left[
X_{K,S}:
\mathrm{Ann}_{\mathbb Z[G]}(\mu(K))X_{K,S}
\right]
=
\frac{w_K}{w_k}
\left[
\mathrm{Ann}_{\mathbb Z[G]}(\mu(K))\cap\Delta G:
\mathrm{Ann}_{\mathbb Z[G]}(\mu(K))\Delta G
\right].
\]

\item If \(|S|\neq 1\), then
\[
\left[
X_{K,S}:
\mathrm{Ann}_{\mathbb Z[G]}(\mu(K))X_{K,S}
\right]
=
w_K\,w_k^{|S\setminus S'|-1}.
\]
\end{enumerate}
\end{proposition}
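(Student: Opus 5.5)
Throughout, write $A:=\mathrm{Ann}_{\mathbb Z[G]}(\mu(K))$. The plan is to use the explicit $\mathbb Z[G]$-module structure of $Y_{K,S}$ under the standing splitting hypotheses, together with two arithmetic facts about $A$.

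\textbf{Fact 1: $[\mathbb Z[G]:A]=w_K$.} Since $\mu(K)$ is cyclic of order $w_K$, $G$ acts on it through a character $\omega:G\to(\mathbb Z/w_K)^\times$. The ring map $\mathbb Z[G]\to\mathbb Z/w_K\mathbb Z$, $\sigma\mapsto\omega(\sigma)$, is surjective, since $1$ lies in the image. Its kernel is exactly $A$, which gives the index.

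\textbf{Fact 2: $\mathrm{Aug}_G(A)=w_k\mathbb Z$.} Compute $\mathbb Z[G]/(A+\Delta G)\cong(\mathbb Z/w_K)/\langle\omega(\sigma)-1\rangle\cong\mu(K)_G$. I then check $|\mu(K)_G|=|\mu(K)^G|=|\mu(k)|=w_k$. The argument is cyclic-group bookkeeping. Each $\sigma-1$ is an endomorphism of the cyclic group $\mu(K)$, so $|\ker(\sigma-1)|\cdot|\mathrm{im}(\sigma-1)|=w_K$. Subgroups of a cyclic group are determined by their orders, so $\mu(K)^G=\bigcap_\sigma\ker(\sigma-1)$ is the smallest kernel. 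Likewise $\Delta G\,\mu(K)=\sum_\sigma\mathrm{im}(\sigma-1)$ is the largest image, and it comes from the same $\sigma$. Hence $|\mu(K)_G|=|\mu(K)^G|$. This is the step I expect to need the most care, since everything else is formal.

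\textbf{Structure of $Y_{K,S}$.} Since $\mathfrak v$ splits completely, the places above it form a free $\mathbb Z[G]$-module $\mathbb Z[G]w_{\mathfrak v}$. Each of the $m:=|S\setminus S'|$ other places has a single place $w_i$ above it, fixed by $G$. So $Y_{K,S}=\mathbb Z[G]w_{\mathfrak v}\oplus\bigoplus_{i=1}^m\mathbb Z w_i$, and $G$ acts trivially on each $\mathbb Z w_i$. Thus $A$ acts on $w_i$ through $\mathrm{Aug}_G(A)=w_k\mathbb Z$. Also $X_{K,S}$ is the kernel of the total degree map $Y_{K,S}\to\mathbb Z$.

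\textbf{Case $|S|=1$.} Here $X_{K,S}=\Delta G\cdot w_{\mathfrak v}\cong\Delta G$, so I need $[\Delta G:A\Delta G]$. I factor it as $[\Delta G:A\cap\Delta G]\cdot[A\cap\Delta G:A\Delta G]$. The first factor is the order of the image of $\Delta G$ in $\mathbb Z[G]/A\cong\mathbb Z/w_K$. By Fact 2 that image has index $w_k$, so the first factor is $w_K/w_k$. This gives the formula in part 1.

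\textbf{Case $|S|\ge 2$, so $m\ge1$.} First, $[Y_{K,S}:AY_{K,S}]=w_K\,w_k^m$ by Facts 1 and 2. Next, apply the degree map to the exact sequence $0\to X_{K,S}\to Y_{K,S}\to\mathbb Z\to0$. The image of $AY_{K,S}$ in $\mathbb Z$ is $\mathrm{Aug}_G(A)=w_k\mathbb Z$. This yields
\[
[Y_{K,S}:AY_{K,S}]=[X_{K,S}:AY_{K,S}\cap X_{K,S}]\cdot w_k .
\]
It then suffices to show $AY_{K,S}\cap X_{K,S}=AX_{K,S}$. Take an element $a w_{\mathfrak v}+\sum_i b_iw_i$ of $AY_{K,S}\cap X_{K,S}$, with $a,b_i\in A$. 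Rewrite it as
\[
a(w_{\mathfrak v}-w_1)+\sum_{i}b_i(w_i-w_1)+\Bigl(\mathrm{Aug}_G(a)+\sum_i\mathrm{Aug}_G(b_i)\Bigr)w_1 .
\]
This rewriting uses that $A$ acts on $w_1$ by augmentation. The last coefficient is the degree, which vanishes. The differences $w_{\mathfrak v}-w_1$ and $w_i-w_1$ lie in $X_{K,S}$, so the element lies in $AX_{K,S}$. The reverse inclusion is clear. Therefore $[X_{K,S}:AX_{K,S}]=w_Kw_k^{m-1}$, which is part 2.
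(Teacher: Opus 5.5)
\textbf{One step needs repair.} Your overall argument is correct, and in part 2 it takes a slightly different route from the paper. The problem is in your proof of Fact 2.

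You argue that since subgroups of a cyclic group are determined by their orders, $\mu(K)^G=\bigcap_\sigma\ker(\sigma-1)$ is the smallest kernel, and that the largest image comes from the same $\sigma$. This does not follow. Subgroups of a cyclic group are not totally ordered by inclusion (take orders $2$ and $3$ in $\mathbb Z/6\mathbb Z$), so an intersection of kernels need not be the kernel of any single $\sigma-1$.

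Concretely, on $\mathbb Z/105\mathbb Z$ the automorphisms $1,34,41,29$ form a Klein four group. Its non-trivial elements have fixed subgroups of orders $3,5,7$, while the common fixed subgroup is trivial. This action is realized on $\mu(\mathbb Q(\mu_{105}))$ over the fixed field of that subgroup, though that extension violates the standing hypotheses on $S$. Your argument uses no such hypothesis, so as written it is invalid.

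The conclusion $|\mu(K)_G|=|\mu(K)^G|$ is still true, and the fix uses what you already wrote. The ideal of $\mathbb Z/w_K\mathbb Z$ generated by the elements $\omega(\sigma)-1$ equals $g\mathbb Z/w_K\mathbb Z$, where $g=\gcd(w_K,\ \omega(\sigma)-1 : \sigma\in G)$. Hence $\mu(K)_G\cong\mathbb Z/g\mathbb Z$. Meanwhile $\mu(K)^G=\{x : gx=0\}$ also has order $g$. The paper does not prove this identity; it quotes $\mathrm{Aug}_G(A)=w_k\mathbb Z$ from earlier work.

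\textbf{Comparison with the paper.} Fact 1 and part 1 match the paper's argument. For part 2 the paper stays inside $X_{K,S}$. It uses the sequence $0\to\Delta G\to X_{K,S}\to\mathbb Z^m\to 0$ and proves $AX_{K,S}\cap\Delta G=A\cap\Delta G$ with the element $(1,-1,0,\dots,0)$. It then multiplies $[\Delta G:A\cap\Delta G]=w_K/w_k$ by $w_k^m$.

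You instead pass to $Y_{K,S}$, where $[Y_{K,S}:AY_{K,S}]=w_Kw_k^m$ follows directly from the direct-sum decomposition. You then come back via the degree sequence and the equality $AY_{K,S}\cap X_{K,S}=AX_{K,S}$. Your rewriting of $a w_{\mathfrak v}+\sum_i b_iw_i$ in terms of differences with $w_1$ is correct, and it uses $m\geq 1$ exactly where the paper does.

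Your route avoids the intermediate index $[\Delta G:A\cap\Delta G]$ in part 2 and is a little more transparent. The paper's route reuses that index, which it needs for part 1 anyway.
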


\begin{proof}
As before, put
\[
A:=\mathrm{Ann}_{\mathbb Z[G]}(\mu(K)).
\]
We shall use the following standard augmentation computation. The augmentation
map induces an exact sequence
\[
0
\longrightarrow
\frac{\Delta G}{A\cap \Delta G}
\longrightarrow
\frac{\mathbb Z[G]}{A}
\longrightarrow
\frac{\mathbb Z}{\mathrm{Aug}_G(A)}
\longrightarrow
0.
\]
Since \(\mu(K)\) is a cyclic \(\mathbb{Z}[G]\)-module, one has
\[
\mathbb Z[G]/A\simeq \mu(K),
\]
and therefore
\[
[\mathbb Z[G]:A]=w_K.
\]
Moreover, as in \cite[Proposition~4.4]{El26},
\begin{equation}\label{roots-of-units-augmentation}
\mathrm{Aug}_G(A)
=
\mathrm{Ann}_{\mathbb Z}(\mu(k))
=
w_k\mathbb Z.
\end{equation}

Taking cardinalities in the previous exact sequence,
we obtain
\[
\left[\Delta G:A\cap\Delta G\right]
=
\frac{
\left[\mathbb Z[G]:A\right]
}{
\left[\mathbb Z:\mathrm{Aug}_G(A)\right]
}.
\]
Since
%BEGIN DIFNOMARKUP
\begin{equation} 
\label{A-and-AugA-index}
[\mathbb Z[G]:A]=w_K
\qquad\text{and}\qquad
[\mathbb Z:\mathrm{Aug}_G(A)]=w_k,
\end{equation}
%END DIFNOMARKUP
it follows that
\begin{equation} \label{Delta-A-index}
\left[\Delta G:A\cap\Delta G\right]
=
\frac{w_K}{w_k}.
\end{equation}
We first assume that \(|S|=1\). Choose a place \(w_0\) of \(K\) above
\(\mathfrak v\). Since \(\mathfrak v\) splits completely in \(K/k\), we have
\[
Y_{K,S}\simeq \mathbb Z[G]w_0,
\qquad
X_{K,S}\simeq \Delta G.
\]
Hence
\[
AX_{K,S}\simeq A\Delta G,
\]
and therefore
\[
\left[
X_{K,S}:AX_{K,S}
\right]
=
\left[
\Delta G:A\Delta G
\right].
\]
Factoring through \(A\cap\Delta G\), we get
\[
\left[
\Delta G:A\Delta G
\right]
=
\left[
\Delta G:A\cap\Delta G
\right]
\left[
A\cap\Delta G:A\Delta G
\right].
\]
Using the computation above gives
\[
\left[
X_{K,S}:AX_{K,S}
\right]
=
\frac{w_K}{w_k}
\left[
A\cap\Delta G:A\Delta G
\right],
\]
which proves the assertion when \(|S|=1\).
We now assume that \(|S|\neq 1\). As before, we write
\[
m:=|S\setminus S'|.
\]
Then \(m\geq 1\). Choose a place \(w_0\) of \(K\) above \(\mathfrak v\), and
write
\[
S\setminus S'=\{v_1,\dots,v_m\}.
\]
For each \(i\), let \(w_i\) be the unique place of \(K\) above \(v_i\). Since
each \(v_i\) does not decompose in \(K/k\), the group \(G\) acts trivially on
\(w_i\). Thus
\[
Y_{K,S}
\simeq
\mathbb Z[G]w_0\oplus \bigoplus_{i=1}^m\mathbb Z w_i.
\]
Under this identification,
\[
X_{K,S}
=
\left\{
(a,n_1,\dots,n_m)\in \mathbb Z[G]\oplus\mathbb Z^m
\; ; \;
\mathrm{Aug}_G(a)+n_1+\cdots+n_m=0
\right\}.
\]
Projecting onto $\mathbb{Z}^m$, gives an exact sequence of \(\mathbb Z[G]\)-modules
\[
0
\longrightarrow
\Delta G
\longrightarrow
X_{K,S}
\longrightarrow
\mathbb Z^m
\longrightarrow
0,
\]
where \(G\) acts trivially on \(\mathbb Z^m\).
The image of \(AX_{K,S}\) in \(\mathbb Z^m\) is
\[
\mathrm{Aug}_G(A)\mathbb Z^m.
\]
We claim that
\begin{equation} \label{Ann-X-cap-Delta-G}
AX_{K,S}\cap \Delta G=A\cap \Delta G.
\end{equation}
Indeed, the inclusion from left to right is immediate from the above
description of \(X_{K,S}\). Conversely, let
\[
\alpha\in A\cap\Delta G.
\]
Since \(m\geq 1\), the element
\[
(1,-1,0,\dots,0)
\]
belongs to \(X_{K,S}\). Multiplying by \(\alpha\), we obtain
\[
\alpha(1,-1,0,\dots,0)
=
(\alpha,0,\dots,0),
\]
because \(\mathrm{Aug}_G(\alpha)=0\). Hence \((\alpha,0,\dots,0)\) belongs to
\(AX_{K,S}\). Under the identification
\[
\Delta G=\ker\left(X_{K,S}\longrightarrow \mathbb Z^m\right),
\]
the element \((\alpha,0,\dots,0)\) is precisely the image of \(\alpha\in\Delta G\).
Thus
\[
\alpha\in AX_{K,S}\cap\Delta G.
\]
This proves the claim.
Consequently, we have an exact sequence
\[
0
\longrightarrow
\frac{\Delta G}{A\cap\Delta G}
\longrightarrow
\frac{X_{K,S}}{AX_{K,S}}
\longrightarrow
\frac{\mathbb Z^m}{\mathrm{Aug}_G(A)\mathbb Z^m}
\longrightarrow
0.
\]
Taking orders gives
\[
\left[
X_{K,S}:AX_{K,S}
\right]
=
\left[
\Delta G:A\cap\Delta G
\right]
\left[
\mathbb Z:\mathrm{Aug}_G(A)
\right]^m.
\]
Using
\eqref{A-and-AugA-index} and \eqref{Delta-A-index},
we obtain
\[
\left[
X_{K,S}:AX_{K,S}
\right]
=
\frac{w_K}{w_k}w_k^m
=
w_Kw_k^{m-1}.
\]
This proves the assertion when \(|S|\ne1\), and hence the proposition.\qed
\end{proof}
\subsection{Some base-change computations}
In this subsection, we suppose as before that \(S\) is a finite set of places
containing one place \(\mathfrak v\) that is totally split in \(K/k\), and we set
\(S':=\{\mathfrak v\}\). Let \(\Sc\supseteq S\) be another finite set of
places of \(k\). We assume that \(|\Sc|\neq 2\) and that no place in
\(\Sc\setminus S'\) is totally split in \(K/k\).

We shall use the following elementary observation throughout this subsection.
Let \(v\in\Sc\setminus S'\), and let \(w\) be a place of \(K\) above
\(v\). Then
\[
e_{\Sc}^1 w=0.
\]
Indeed, let \(\chi\in\widehat G\) be a character such that \(e_\chi\) occurs in
\(e_{\Sc}^1\). Then \(r_{\Sc}(\chi)=1\). Since the place
\(\mathfrak v\) is totally split in \(K/k\), it already contributes one to
\(r_{\Sc}(\chi)\). Hence by \eqref{order-formula}, for every \(v\in\Sc\setminus S'\), one
must have
\[
\chi({D_v(K/k)})\neq \mathbf 1.
\]
By the standard vanishing property
\[
\chi({D_v(K/k)})\neq \mathbf 1
\implies
e_\chi w=0
\]
(see for example \cite[Proof of Lemma 2.6(i)]{Rubin}), it follows that
\(e_\chi w=0\) for every character \(\chi\) occurring in
\(e_{\Sc}^1\). Therefore
\begin{equation} \label{preliminary-observation}
e_{\Sc}^1w=0
\qquad
\text{for every }v\in\Sc\setminus S
\text{ and every }w\mid v.
\end{equation}
Notice also that the trivial character does not occur in \(e_{\Sc}^1\),
since
\[
r_{\Sc}(\chi_0)=|\Sc|-1\neq 1.
\]
\begin{lemma}\label{Lemma_4_8}
    If the places in $\Sc\setminus S'$ do not decompose in $K/k$, then
   \[
\left(
e_{\Sc}^1\mathbb Z_p\varrho_{K/k,S}
:
e_{\Sc}^1\mathbb Z_p\varrho_{K/k,\Sc}
\right)_{\mathbb Z_p}
=
\left(
\prod_{v\in\Sc\setminus S}[G:I_v]
\right)\mathbb Z_p.
\]
\end{lemma}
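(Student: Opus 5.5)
Set $A:=\mathrm{Ann}_{\mathbb Z[G]}(\mu(K))$. The plan is to compare the two Stickelberger modules through the Euler factors relating $\Theta_{K/k,\Sc}$ and $\Theta_{K/k,S}$, and through the comparison of the regulator maps $\widetilde\lambda_{K,S}$ and $\widetilde\lambda_{K,\Sc}$ on the $e_{\Sc}^1$-component.

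\textbf{Relating the leading terms.} For $\chi$ occurring in $e_{\Sc}^1$ we have $r_{\Sc}(\chi)=1$. Each $v\in\Sc\setminus S'$ does not decompose, so $D_v=G$ and $\chi(D_v)\neq 1$. Consequently $r_S(\chi)=1$ as well, and the Euler factor at $v\in\Sc\setminus S$ does not vanish at $s=0$. Hence
\[
e_{\Sc}^1\Theta_{K/k,\Sc}^*(0)=e_{\Sc}^1\Theta_{K/k,S}^*(0)\prod_{v\in\Sc\setminus S}\epsilon_v,
\qquad
\epsilon_v:=1-e_{I_v}\sigma_v^{-1}\in\mathbb Q[G],
\]
where $e_{I_v}=|I_v|^{-1}\sum_{\tau\in I_v}\tau$. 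Since $\chi\neq\chi_0$ on $D_v=G$, one has $e_\chi\epsilon_v=e_\chi$ if $\chi(I_v)\neq1$, and $e_\chi\epsilon_v=(1-\chi^{-1}(\sigma_v))e_\chi$ otherwise.

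\textbf{Relating the lattices.} By \eqref{preliminary-observation} we have $e_{\Sc}^1w=0$ for $w\mid v\in\Sc\setminus S$. Moreover, $e_{\Sc}^1$ annihilates the trivial-character part, so
\[
e_{\Sc}^1X_{K,\Sc}=e_{\Sc}^1Y_{K,\Sc}=e_{\Sc}^1\mathbb Z[G]w_0=e_{\Sc}^1X_{K,S}.
\]
For the regulators, $\widetilde\lambda_{K,\Sc}$ restricted to $\mathbb C O_{K,S}^\times$ agrees with $\widetilde\lambda_{K,S}$ after projecting away the coordinates $w\mid v$, which are killed by $e_{\Sc}^1$. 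On the $e_{\Sc}^1$-part we also have $e_{\Sc}^1\mathbb C O_{K,\Sc}^\times=e_{\Sc}^1\mathbb C O_{K,S}^\times$ by dimension count (both are $1$-dimensional per character). Applying $\widetilde\lambda^{-1}$ then gives
\[
e_{\Sc}^1\varrho_{K/k,\Sc}=\Big(\prod_{v\in\Sc\setminus S}\epsilon_v\Big)e_{\Sc}^1\varrho_{K/k,S}
\]
inside the same $\mathbb C_p$-space. One should double-check that the $A$-factor is the same on both sides, which it is since $A$ depends only on $K$.

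\textbf{Computing the index.} By the definition of the generalized index, the index in the lemma is $\det\bigl(\prod_v\epsilon_v\mid e_{\Sc}^1\mathbb C_p\varrho_{K/k,S}\bigr)\mathbb Z_p$, and this space has one dimension per $\chi$ in $e_{\Sc}^1$. So the index equals
\[
\prod_{v\in\Sc\setminus S}\ \prod_{\chi\in e_{\Sc}^1,\ \chi(I_v)=1}\bigl(1-\chi^{-1}(\sigma_v)\bigr)\ \mathbb Z_p .
\]
The characters with $\chi(I_v)=1$ are the characters of the cyclic group $G/I_v$, of order $[G:I_v]$ and generated by $\sigma_v$ because $D_v=G$. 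The product of $1-\zeta$ over the nontrivial $[G:I_v]$-th roots of unity is $[G:I_v]$.

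\textbf{Main obstacle.} The hard step is the bookkeeping over which characters lie in $e_{\Sc}^1$. For each $v$, the relevant product runs over the nontrivial characters of $G/I_v$, and these must all occur in $e_{\Sc}^1$. This holds because every $\chi\neq\chi_0$ has $r_{\Sc}(\chi)=1$ under our hypothesis, while $\chi_0$ is excluded since $|\Sc|\neq2$. I would verify this carefully, together with the claim that the equality of spans of $\varrho$ passes correctly through $\widetilde\lambda$.
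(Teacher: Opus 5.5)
Your route is the paper's own. You compare $e^1_{\Sc}\Theta^*_{K/k,\Sc}(0)$ with $e^1_{\Sc}\Theta^*_{K/k,S}(0)$ through the factors $1-e_{I_v}\sigma_v^{-1}$, identify the $e^1_{\Sc}$-parts of the two regulator targets, and take the determinant over the non-trivial characters of $G/I_v$. Stating the Euler-factor relation only after projecting by $e^1_{\Sc}$, as you do, is more accurate than the unprojected identity quoted in the paper, which fails on the $e_{\chi_0}$-component.

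The gap is the last equality in $e^1_{\Sc}X_{K,\Sc}=e^1_{\Sc}Y_{K,\Sc}=e^1_{\Sc}\mathbb Z[G]w_0=e^1_{\Sc}X_{K,S}$, which the paper also asserts. After tensoring with $\mathbb Z_p$, it fails when $S=S'=\{\mathfrak v\}$ and $p\mid |G|$. This is exactly the case in which the lemma is invoked in Theorem~\ref{Thm:RS-obstruction-divisibility}.
\begin{itemize}
\item On the $S$ side, $X_{K,S}=\Delta G\,w_0$. Since $e^1_{\Sc}=1-e_{\chi_0}$ is the identity on $\Delta G$, you get $e^1_{\Sc}X_{K,S}=\Delta G\,w_0$.
\item On the $\Sc$ side, $(a,-\mathrm{Aug}_G(a),0,\dots,0)\in X_{K,\Sc}$ gives $e^1_{\Sc}X_{K,\Sc}=(1-e_{\chi_0})\mathbb Z[G]\,w_0$.
\item This contains $\Delta G\,w_0$ with index $|G|$, because $1-e_{\chi_0}=|G|^{-1}\sum_{\sigma\in G}(1-\sigma)$ has order exactly $|G|$ modulo $\Delta G$.
\end{itemize}
To absorb the augmentation integrally, you need a place of $S$ other than $\mathfrak v$ whose places above are killed by $e^1_{\Sc}$. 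Your remark that $e^1_{\Sc}$ kills the trivial-character part only gives the equality after tensoring with $\mathbb Q$.

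Carrying your computation through correctly, with $A:=\mathrm{Ann}_{\mathbb Z[G]}(\mu(K))$, gives
\[
\bigl(e^1_{\Sc}\mathbb Z_p\varrho_{K/k,S}:e^1_{\Sc}\mathbb Z_p\varrho_{K/k,\Sc}\bigr)_{\mathbb Z_p}
=\bigl(e^1_{\Sc}\mathbb Z_pAX_{K,S}:e^1_{\Sc}\mathbb Z_pAX_{K,\Sc}\bigr)_{\mathbb Z_p}\prod_{v\in\Sc\setminus S}[G:I_v]\,\mathbb Z_p .
\]
The first factor is trivial when $S\neq S'$, for instance $|S|=2$ as in Theorem~\ref{ClassNumberRelation-Cardinal2}. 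In that case your proof is complete.

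For $S=\{\mathfrak v\}$ the first factor equals $\bigl(\mathbb Z_pA\,\Delta_pG:(1-e_{\chi_0})\mathbb Z_pA\bigr)_{\mathbb Z_p}=[(1-e_{\chi_0})\mathbb Z_pA:\mathbb Z_pA\,\Delta_pG]^{-1}\mathbb Z_p$. For example, if $\mu(K)(p)=0$ it is $p^{-v_p(|G|)}\mathbb Z_p$.

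Here is a concrete counterexample. Take a quadratic extension of function fields of characteristic $2$ and $p=2$, so $\mu(K)(2)=0$. Let $\Sc\setminus S$ consist of two ramified places. The Euler factors then act trivially on $e^1_{\Sc}$, and $e^1_{\Sc}\mathbb Z_2\varrho_{K/k,\Sc}=\tfrac12\,e^1_{\Sc}\mathbb Z_2\varrho_{K/k,S}$. So the index is $2^{-1}\mathbb Z_2$, not the predicted $\mathbb Z_2$.

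The extra factor has non-positive valuation, so the upper bound $\sum_v v_p([G:I_v])$ used afterwards still holds. The equality claimed in the lemma, however, does not hold for $|S|=1$ without this correction.
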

\begin{proof}
  By the preliminary observation \eqref{preliminary-observation}, we have 
\[
e_{\Sc}^1X_{K,\Sc}=e_{\Sc}^1X_{K,S}\qquad \text{and}\qquad e_{\Sc}^1\tilde{\lambda}_{K,\Sc}=e_{\Sc}^1\tilde{\lambda}_{K, S}.
\]
Next, recall that
(see e.g., \cite[(11)]{Solomon} or \cite[(7)]{DasKakde})

{}

\begin{equation*}
\Theta_{K/k,\Sc}^{*}(0)=\Big{(}\prod_{v\in \Sc\setminus S}(1 - \sigma_v^{-1}e_v)\Big{)}\cdot\Theta_{K/k,S}^{*}(0),
\end{equation*}

{}
where
\[
e_v:=\frac{1}{|I_v|}\sum_{\tau\in I_v}\tau.
\]
Put
\[
E_{\Sc/S}
:=
\prod_{v\in\Sc\setminus S}
(1-\sigma_v^{-1}e_v).
\]
Using the preceding identification of the regulator targets, the change from
\(e_{\Sc}^1\mathbb Z_p\varrho_{K/k,S}\) to
\(e_{\Sc}^1\mathbb Z_p\varrho_{K/k,\Sc}\) is given by multiplication by
\(E_{\Sc/S}\) on \(e_{\Sc}^1\mathbb Q_pX_{K,S}\). Hence
\[
\left(
e_{\Sc}^1\mathbb Z_p\varrho_{K/k,S}
:
e_{\Sc}^1\mathbb Z_p\varrho_{K/k,\Sc}
\right)_{\mathbb Z_p}
=
\mathrm{det}_{\mathbb Q_p}
\left(
E_{\Sc/S}\mid e_{\Sc}^1\mathbb Q_pX_{K,S}
\right)\mathbb Z_p.
\]

For a fixed \(v\in\Sc\setminus S\), since \(D_v=G\), the quotient
\(G/I_v\) is cyclic and generated by the image of \(\sigma_v\). On the
\(\chi\)-component, the factor \(1-\sigma_v^{-1}e_v\) acts as \(1\) if
\(\chi\) is non-trivial on \(I_v\), and as \(1-\chi(\sigma_v)^{-1}\) if
\(\chi\) is trivial on \(I_v\). Also, since $|\Sc|\neq 2$ and the places in $\Sc\setminus S'$ do not decompose in $K/k$, we have $e_{\Sc}^1=1-e_{\chi_0}$. Therefore
\[
\mathrm{det}_{\mathbb Q_p}
\left(
1-\sigma_v^{-1}e_v\mid e_{\Sc}^1\mathbb Q_pX_{K,S}
\right)
=
\prod_{\substack{\psi\in\widehat{G/I_v}\\ \psi\neq 1}}
(1-\psi(\sigma_v)^{-1})
=
[G:I_v].
\]
We explain the last equality: If \(N:=[G:I_v]\), then the values
\(\psi(\sigma_v)\), for \(\psi\in\widehat{G/I_v}\), are precisely the \(N\)-th roots
of unity. Hence
\[
\prod_{\substack{\psi\in\widehat{G/I_v}\\ \psi\neq 1}}
(1-\psi(\sigma_v)^{-1})
=
\prod_{j=1}^{N-1}(1-\zeta^j),
\]
where \(\zeta\) is a primitive \(N\)-th root of unity. Since
\[
1+X+\cdots+X^{N-1}
=
\prod_{j=1}^{N-1}(X-\zeta^j),
\]
evaluating at \(X=1\) gives
\[
\prod_{j=1}^{N-1}(1-\zeta^j)=N.
\]
Thus
\[
\left(
e_{\Sc}^1\mathbb Z_p\varrho_{K/k,S}
:
e_{\Sc}^1\mathbb Z_p\varrho_{K/k,\Sc}
\right)_{\mathbb Z_p}
=
\left(
\prod_{v\in\Sc\setminus S}[G:I_v]
\right)\mathbb Z_p.
\]
\qed
\end{proof}

\begin{lemma}
\label{Lem:unit-index-bound}
The index
\[
\left[
e_{\Sc}^1\mathbb Z_p(O_{K,\Sc}^{\times})_{\mathrm{tf}}
:
e_{\Sc}^1\mathbb Z_p(O_{K,S}^{\times})_{\mathrm{tf}}
\right]
\]
is well defined. Moreover,
\[
\left[
e_{\Sc}^1\mathbb Z_p(O_{K,\Sc}^{\times})_{\mathrm{tf}}
:
e_{\Sc}^1\mathbb Z_p(O_{K,S}^{\times})_{\mathrm{tf}}
\right]
\leq
p^{v_p(|G|)\cdot\left|\Sc_K\setminus S_K\right|}.
\]
In particular, if every place in \(\Sc\setminus S\) does not decompose in
\(K/k\), then
\[
\left[
e_{\Sc}^1\mathbb Z_p(O_{K,\Sc}^{\times})_{\mathrm{tf}}
:
e_{\Sc}^1\mathbb Z_p(O_{K,S}^{\times})_{\mathrm{tf}}
\right]
\leq
p^{v_p(|G|)\cdot|\Sc\setminus S|}.
\]
\end{lemma}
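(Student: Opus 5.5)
The plan is to compare the two unit groups through the valuation map at the places of $\Sc_K\setminus S_K$, and then use the vanishing observation \eqref{preliminary-observation} to control what survives after applying $e_{\Sc}^1$.

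First, the index is well defined. Since $O_{K,S}^{\times}\subseteq O_{K,\Sc}^{\times}$ and $(O_{K,S}^{\times})_{\mathrm{tors}}=\mu(K)=(O_{K,\Sc}^{\times})_{\mathrm{tors}}$, the induced map on torsion-free quotients is injective. Tensoring with $\mathbb Z_p$ and multiplying by the idempotent $e_{\Sc}^1\in\mathbb Q_p[G]$ gives an inclusion
\[
e_{\Sc}^1\mathbb Z_p(O_{K,S}^{\times})_{\mathrm{tf}}\subseteq e_{\Sc}^1\mathbb Z_p(O_{K,\Sc}^{\times})_{\mathrm{tf}}
\]
of finitely generated $\mathbb Z_p$-modules inside $e_{\Sc}^1\mathbb Q_pO_{K,\Sc}^{\times}$. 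Both span the same $\mathbb Q_p$-space, because $O_{K,\Sc}^{\times}/O_{K,S}^{\times}$ embeds via valuations into $\mathbb Z[\Sc_K\setminus S_K]$, and that target is killed rationally by $e_{\Sc}^1$ by \eqref{preliminary-observation}. Hence the index is finite.

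For the bound, consider the exact sequence
\[
0\to O_{K,S}^{\times}\to O_{K,\Sc}^{\times}\xrightarrow{\ \mathrm{ord}\ }\bigoplus_{w\in\Sc_K\setminus S_K}\mathbb Z w,
\]
whose image $Q$ is a free $\mathbb Z$-module of rank at most $|\Sc_K\setminus S_K|$. The image of $\mathcal U:=e_{\Sc}^1\mathbb Z_p(O_{K,\Sc}^{\times})_{\mathrm{tf}}$ modulo $\mathcal U_S:=e_{\Sc}^1\mathbb Z_p(O_{K,S}^{\times})_{\mathrm{tf}}$ is a quotient of $e_{\Sc}^1\mathbb Z_pQ$. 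Write $e_{\Sc}^1=\frac{1}{|G|}\beta$ with $\beta\in\mathbb Z[G]$. For $u\in O_{K,\Sc}^{\times}$, the element $\beta u$ lies in $O_{K,S}^{\times}$ up to torsion: its valuation vector is $\beta\cdot\mathrm{ord}(u)$, and this vanishes because it is $|G|e_{\Sc}^1\,\mathrm{ord}(u)=0$ by \eqref{preliminary-observation}. Consequently, for any $x\in\mathcal U$, writing $x=e_{\Sc}^1u$, we get $|G|x=e_{\Sc}^1(\beta u)\in\mathcal U_S$, since $e_{\Sc}^1\beta=\beta$. So the quotient $\mathcal U/\mathcal U_S$ is killed by $p^{v_p(|G|)}$.

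It remains to bound the number of generators. The map $\mathbb Z_p(O_{K,\Sc}^{\times})_{\mathrm{tf}}\to\mathcal U/\mathcal U_S$, $u\mapsto e_{\Sc}^1u$, is surjective and kills $\mathbb Z_p(O_{K,S}^{\times})_{\mathrm{tf}}$. It therefore factors through $\mathbb Z_pQ\cong\mathbb Z_p^{\,t}$ with $t\le|\Sc_K\setminus S_K|$. A quotient of $\mathbb Z_p^{\,t}$ killed by $p^{a}$, with $a=v_p(|G|)$, has order at most $p^{at}$, which gives the bound. If every place of $\Sc\setminus S$ does not decompose, then each such place has exactly one place above it in $K$, so $|\Sc_K\setminus S_K|=|\Sc\setminus S|$ and the special case follows.

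The main obstacle is the integrality step, namely checking that $\beta u$ genuinely lands in $O_{K,S}^{\times}$ modulo torsion rather than only rationally. This is handled by observing that the valuation map is $G$-equivariant and integral, and that $\beta\,\mathrm{ord}(u)$ is an integral vector equal to $0$ in the $\mathbb Q$-span.
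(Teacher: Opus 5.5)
Your proof is correct and is essentially the paper's argument. You use the same valuation sequence, the same exponent bound from $|G|e_{\Sigma}^1\in\mathbb Z[G]$ together with $e_{\Sigma}^1w=0$, and the same generator count from the rank of the valuation image; the only difference is that you treat all places of $\Sigma_K\setminus S_K$ at once, whereas the paper adds the places of $\Sigma\setminus S$ one at a time and multiplies the indices. One small slip: $\mathcal U/\mathcal U_S$ is a quotient of $\mathbb Z_pQ$, not of $e_{\Sigma}^1\mathbb Z_pQ$ (which is $0$ because $e_{\Sigma}^1$ kills $\mathbb Q_pQ$), and your final paragraph correctly uses $\mathbb Z_pQ$.
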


\begin{proof}
We add the places of \(\Sc\setminus S\) one at a time. Let
\[
S\subseteq S_1\subseteq \Sc,
\qquad
S_2=S_1\cup\{v\},
\]
and let \(w_1,\dots,w_t\) be the places of \(K\) above \(v\).

We use the standard exact sequence, see e.g.~\cite[Chapter~I, Proposition~11.6]{Neukirch},
\begin{equation} \label{S1-S2-units}
0
\longrightarrow
O_{K,S_1}^{\times}
\longrightarrow
O_{K,S_2}^{\times}
\longrightarrow
\bigoplus_{w\mid v}\mathbb Z w
\longrightarrow
\mathrm{Cl}_{K,S_1}
\longrightarrow
\mathrm{Cl}_{K,S_2}
\longrightarrow
0,
\end{equation}
where the third map is induced by taking the divisor outside \(S_1\). Since
\(\mu(K)\subseteq O_{K,S_1}^{\times}\), this induces an exact sequence
\[
0
\longrightarrow
(O_{K,S_1}^{\times})_{\mathrm{tf}}
\longrightarrow
(O_{K,S_2}^{\times})_{\mathrm{tf}}
\longrightarrow
\bigoplus_{w\mid v}\mathbb Z w
\longrightarrow
\mathrm{Cl}_{K,S_1}
\longrightarrow
\mathrm{Cl}_{K,S_2}.
\]

Upon tensoring with \(\mathbb Q_p\), the class groups vanish. Hence
\[
0
\longrightarrow
\mathbb Q_p(O_{K,S_1}^{\times})_{\mathrm{tf}}
\longrightarrow
\mathbb Q_p(O_{K,S_2}^{\times})_{\mathrm{tf}}
\longrightarrow
\bigoplus_{w\mid v}\mathbb Q_p w
\longrightarrow
0
\]
is exact. By the preliminary observation \eqref{preliminary-observation}, we have
\[
e_{\Sc}^1w=0
\qquad
\text{for every }w\mid v.
\]
and we obtain
\[
e_{\Sc}^1\mathbb Q_p(O_{K,S_2}^{\times})_{\mathrm{tf}}
=
e_{\Sc}^1\mathbb Q_p(O_{K,S_1}^{\times})_{\mathrm{tf}}.
\]
Therefore
\(
{
e_{\Sc}^1\mathbb Z_p(O_{K,S_2}^{\times})_{\mathrm{tf}}
}/{
e_{\Sc}^1\mathbb Z_p(O_{K,S_1}^{\times})_{\mathrm{tf}}
}
\)
is finite.
We now bound its order. Tensoring the exact sequence \eqref{S1-S2-units} with
\(\mathbb Z_p\), we get an injection
\[
\frac{
\mathbb Z_p(O_{K,S_2}^{\times})_{\mathrm{tf}}
}{
\mathbb Z_p(O_{K,S_1}^{\times})_{\mathrm{tf}}
}
\hookrightarrow
\bigoplus_{w\mid v}\mathbb Z_p w
=
\bigoplus_{i=1}^{t}\mathbb Z_p w_i.
\]
Since \(\mathbb Z_p\) is a principal ideal domain, every submodule of a free
\(\mathbb Z_p\)-module is free. Hence
\[
\frac{
\mathbb Z_p(O_{K,S_2}^{\times})_{\mathrm{tf}}
}{
\mathbb Z_p(O_{K,S_1}^{\times})_{\mathrm{tf}}
}
\]
can be generated by \(t\) elements. Moreover, the natural map
\[
\frac{
\mathbb Z_p(O_{K,S_2}^{\times})_{\mathrm{tf}}
}{
\mathbb Z_p(O_{K,S_1}^{\times})_{\mathrm{tf}}
}
\longrightarrow
\frac{
e_{\Sc}^1\mathbb Z_p(O_{K,S_2}^{\times})_{\mathrm{tf}}
}{
e_{\Sc}^1\mathbb Z_p(O_{K,S_1}^{\times})_{\mathrm{tf}}
},
\qquad
u\longmapsto e_{\Sc}^1u,
\]
is surjective. Therefore the latter quotient can also be generated by \(t\)
elements.
It remains to bound the exponent. Since
\[
|G|e_{\Sc}^1\in \mathbb Z_p[G],
\]
we have
\[
|G|e_{\Sc}^1u
\in
\mathbb Z_p(O_{K,S_2}^{\times})_{\mathrm{tf}}
\]
for every \(u\in \mathbb Z_p(O_{K,S_2}^{\times})_{\mathrm{tf}}\). Its image in
\(
\bigoplus_{w\mid v}\mathbb Z_p w
\)
is obtained by applying \(|G|e_{\Sc}^1\) to the image of \(u\), and is
therefore zero because \(e_{\Sc}^1w=0\) for every \(w\mid v\). Hence
\[
|G|e_{\Sc}^1u
\in
\mathbb Z_p(O_{K,S_1}^{\times})_{\mathrm{tf}}.
\]
Since this element lies in the \(e_{\Sc}^1\)-component, we get
\[
|G|e_{\Sc}^1u
\in
e_{\Sc}^1\mathbb Z_p(O_{K,S_1}^{\times})_{\mathrm{tf}}.
\]
Thus
\[
|G|\cdot e_{\Sc}^1\mathbb Z_p(O_{K,S_2}^{\times})_{\mathrm{tf}}
\subseteq
e_{\Sc}^1\mathbb Z_p(O_{K,S_1}^{\times})_{\mathrm{tf}}.
\]
Therefore the finite quotient
\[
\frac{
e_{\Sc}^1\mathbb Z_p(O_{K,S_2}^{\times})_{\mathrm{tf}}
}{
e_{\Sc}^1\mathbb Z_p(O_{K,S_1}^{\times})_{\mathrm{tf}}
}
\]
is killed by \(|G|\). Since it can be generated by \(t\) elements, its order is
bounded by
\(
p^{v_p(|G|)t}
\).
Iterating over all places \(v\in\Sc\setminus S\), and using
multiplicativity of indices, we obtain
\[
\left[
e_{\Sc}^1\mathbb Z_p(O_{K,\Sc}^{\times})_{\mathrm{tf}}
:
e_{\Sc}^1\mathbb Z_p(O_{K,S}^{\times})_{\mathrm{tf}}
\right]
\leq
p^{v_p(|G|)\|\Sc_K\setminus S_K|}.
\]
If every place in \(\Sc\setminus S\) does not decompose in \(K/k\), then
each such place has a unique place above it in \(K\). Hence
\[
|\Sc_K\setminus S_K|=|\Sc\setminus S|,
\]
and the final assertion follows.\qed
\end{proof}

\subsection{The case \( |S|=1 \)} \label{subsection:S=1}
We now specialize to the case where
\[
S=S'=\{\mathfrak v\},
\]
with \(\mathfrak v\) totally split in \(K/k\). By Proposition
\ref{Prop:XKS-annihilator-index-two-cases}, the remaining correction term in
the generalized index is
\[
\left[
\mathrm{Ann}_{\mathbb Z[G]}(\mu(K))\cap\Delta G:
\mathrm{Ann}_{\mathbb Z[G]}(\mu(K))\Delta G
\right].
\]
In \cite[\S 4.4]{El26}, this factor was shown to have trivial \(p\)-primary
part whenever \(p\nmid |G|\). We now give a cohomological description of its
\(p\)-primary part for every rational prime \(p\), including the primes dividing
\(|G|\).
\subsubsection{The annihilator-augmentation correction factor}
Let \(p\) be a rational prime, and put
\(
\mu(K)(p):=\mu(K)\otimes_{\mathbb Z}\mathbb Z_p.
\).
\begin{proposition}
\label{Prop:correction-factor-homology}
Let \(p\) be a rational prime. Then
\[
\frac{
\mathrm{Ann}_{\mathbb Z_p[G]}(\mu(K)(p))\cap \Delta_pG
}{
\mathrm{Ann}_{\mathbb Z_p[G]}(\mu(K)(p))\Delta_pG
}
\simeq
H_1(G,\mu(K)(p)).
\]
In particular, if \(|\mu(K)(p)|=p^n\), then
\[
\left[
\mathrm{Ann}_{\mathbb Z_p[G]}(\mu(K)(p))\cap \Delta_pG:
\mathrm{Ann}_{\mathbb Z_p[G]}(\mu(K)(p))\Delta_pG
\right]
=
\big| H^1\left(G,\mathbb Z/p^n\mathbb Z(-1)\right) \big|.
\]
\end{proposition}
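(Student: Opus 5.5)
The plan is to recognise the quotient as a $\mathrm{Tor}$-group. Put $R:=\mathbb Z_p[G]$, $M:=\mu(K)(p)$, $A:=\mathrm{Ann}_{R}(M)$ and $\Delta:=\Delta_pG$. Since $\mu(K)$ is a cyclic $\mathbb Z[G]$-module, $M$ is a cyclic $R$-module. Choosing a generator $\zeta$ gives a surjection $R\to M$, $x\mapsto x\zeta$, whose kernel is exactly $A$. So we have a short exact sequence of $R$-modules
\[
0\longrightarrow A\longrightarrow R\longrightarrow M\longrightarrow 0 .
\]
Recall that $H_1(G,M)=\mathrm{Tor}_1^{R}(\mathbb Z_p,M)$, where $\mathbb Z_p=R/\Delta$ carries the trivial action. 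This is legitimate because $\mathbb Z_p$ is flat over $\mathbb Z$, so $\mathbb Z_p[G]$-homology of a $p$-primary module agrees with $\mathbb Z[G]$-homology.

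We then apply $\mathbb Z_p\otimes_R-$ to the sequence above. Since $R$ is free, $\mathrm{Tor}_1^R(\mathbb Z_p,R)=0$, and the long exact sequence begins
\[
0\longrightarrow H_1(G,M)\longrightarrow \mathbb Z_p\otimes_R A\longrightarrow \mathbb Z_p\otimes_R R\longrightarrow \cdots .
\]
We have $\mathbb Z_p\otimes_R A=A/\Delta A$ and $\mathbb Z_p\otimes_R R=R/\Delta\simeq\mathbb Z_p$, with the map $A/\Delta A\to R/\Delta$ induced by inclusion, i.e.\ by $\mathrm{Aug}_G$. Its kernel is $(A\cap\Delta)/\Delta A$. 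Because $R$ is commutative, $\Delta A=A\Delta$, which gives the first isomorphism. The only point needing care is the identification of the connecting map with the augmentation, and this is immediate from the definitions.

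For the cardinality statement, $M$ is cyclic of order $p^n$ with $G$ acting through a character $\omega:G\to(\mathbb Z/p^n\mathbb Z)^\times$ (the cyclotomic character restricted to $G$). Hence $M\simeq\mathbb Z/p^n\mathbb Z(1)$ as $G$-modules, and its Pontryagin dual $M^\vee=\mathrm{Hom}(M,\mathbb Q_p/\mathbb Z_p)$ is $\mathbb Z/p^n\mathbb Z(-1)$, with $G$ acting via $\omega^{-1}$.

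Standard duality between group homology and cohomology of finite groups with coefficients in finite modules gives $H_1(G,M)^\vee\simeq H^1(G,M^\vee)$. This follows from exactness of Pontryagin duality and the identification $\mathrm{Hom}(P\otimes_{\mathbb Z[G]}M,\mathbb Q/\mathbb Z)\simeq\mathrm{Hom}_{\mathbb Z[G]}(P,M^\vee)$ for a projective resolution $P$ of $\mathbb Z$. Since all groups involved are finite, taking orders yields $|H_1(G,M)|=|H^1(G,\mathbb Z/p^n\mathbb Z(-1))|$, and this equals the stated index by the first part.

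I expect no serious obstacle. The step needing the most attention is the bookkeeping of the twist, namely that the dual of $\mu_{p^n}$ carries the inverse character, so that the sign $(-1)$ comes out correctly.
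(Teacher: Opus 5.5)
Your argument is correct and is essentially the paper's proof: the same presentation $0\to A\to\mathbb Z_p[G]\to\mu(K)(p)\to 0$, tensored over $\mathbb Z_p[G]$ with $\mathbb Z_p=\mathbb Z_p[G]/\Delta_pG$ to identify $\mathrm{Tor}_1\simeq H_1(G,\mu(K)(p))$ with $(A\cap\Delta_pG)/A\Delta_pG$, followed by Pontryagin duality $H_1(G,M)^\vee\simeq H^1(G,M^\vee)$ with $M^\vee\simeq\mathbb Z/p^n\mathbb Z(-1)$. Your flat base-change remark identifying $\mathrm{Tor}_1^{\mathbb Z_p[G]}$ with $\mathrm{Tor}_1^{\mathbb Z[G]}$ is a detail the paper leaves implicit, and the only slip is terminological: the map $A/\Delta_pG\,A\to\mathbb Z_p[G]/\Delta_pG$ that you identify with the augmentation is induced by the inclusion $A\subseteq\mathbb Z_p[G]$, not by the connecting homomorphism (which is the injection $H_1(G,M)\to A/\Delta_pG\,A$), so the argument is unaffected.
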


\begin{proof}
If \(\mu(K)(p)=0\), then
\[
\mathrm{Ann}_{\mathbb Z_p[G]}(\mu(K)(p))=\mathbb Z_p[G],
\]
and both sides of the first claimed isomorphism are zero. We may therefore
assume that \(\mu(K)(p)\neq 0\).
Choose a generator \(\zeta\) of the cyclic \(\mathbb Z_p[G]\)-module
\(\mu(K)(p)\). Then
\[
\mathbb Z_p[G]\longrightarrow \mu(K)(p),
\qquad
x\longmapsto x\zeta
\]
is surjective, with kernel
\[
\mathrm{Ann}_{\mathbb Z_p[G]}(\mu(K)(p)).
\]
Hence we have an exact sequence
\[
0
\longrightarrow
\mathrm{Ann}_{\mathbb Z_p[G]}(\mu(K)(p))
\longrightarrow
\mathbb Z_p[G]
\longrightarrow
\mu(K)(p)
\longrightarrow
0.
\]
Tensoring over \(\mathbb Z_p[G]\) with the trivial module
\(
\mathbb Z_p\simeq \mathbb Z_p[G]/\Delta_pG
\),
gives an exact sequence
\[
0
\longrightarrow
\mathrm{Tor}_1^{\mathbb Z_p[G]}(\mu(K)(p),\mathbb Z_p)
\longrightarrow
\frac{
\mathrm{Ann}_{\mathbb Z_p[G]}(\mu(K)(p))
}{
\mathrm{Ann}_{\mathbb Z_p[G]}(\mu(K)(p))\Delta_pG
}
\longrightarrow
\mathbb Z_p[G]/\Delta_pG.
\]
The last map is induced by the inclusion
\[
\mathrm{Ann}_{\mathbb Z_p[G]}(\mu(K)(p))
\hookrightarrow
\mathbb Z_p[G],
\]
and its kernel is
\[
\frac{
\mathrm{Ann}_{\mathbb Z_p[G]}(\mu(K)(p))\cap \Delta_pG
}{
\mathrm{Ann}_{\mathbb Z_p[G]}(\mu(K)(p))\Delta_pG
}.
\]
Thus
\[
\frac{
\mathrm{Ann}_{\mathbb Z_p[G]}(\mu(K)(p))\cap \Delta_pG
}{
\mathrm{Ann}_{\mathbb Z_p[G]}(\mu(K)(p))\Delta_pG
}
\simeq
\mathrm{Tor}_1^{\mathbb Z_p[G]}(\mu(K)(p),\mathbb Z_p).
\]
Since
\[
\mathrm{Tor}_1^{\mathbb Z_p[G]}(\mu(K)(p),\mathbb Z_p)
\simeq
H_1(G,\mu(K)(p)),
\]
the first assertion follows.
Pontryagin duality gives
\[
H_1(G,\mu(K)(p))^\vee
\simeq
H^1(G,\mu(K)(p)^\vee).
\]
Finally, if \(|\mu(K)(p)|=p^n\), then
\[
\mu(K)(p)\simeq \mathbb Z/p^n\mathbb Z(1),
\qquad
\mu(K)(p)^\vee\simeq \mathbb Z/p^n\mathbb Z(-1).
\]
Therefore
\[
\left[
\mathrm{Ann}_{\mathbb Z_p[G]}(\mu(K)(p))\cap \Delta_pG:
\mathrm{Ann}_{\mathbb Z_p[G]}(\mu(K)(p))\Delta_pG
\right]
=
\big| H_1(G,\mu(K)(p)) \big|
=
\big| H^1\left(G,\mathbb Z/p^n\mathbb Z(-1)\right) \big|.
\]\qed
\end{proof}
\begin{corollary}
\label{Cor:correction-factor-trivial}
With the notation above, one has
\[
\left[
\mathrm{Ann}_{\mathbb Z_p[G]}(\mu(K)(p))\cap \Delta_pG:
\mathrm{Ann}_{\mathbb Z_p[G]}(\mu(K)(p))\Delta_pG
\right]=1
\]
in each of the following cases:
\begin{enumerate}
    \item \(p\nmid |G|\);
    \item \(\mu(k)(p)=0\), equivalently \(p\nmid w_k\).
\end{enumerate}
\end{corollary}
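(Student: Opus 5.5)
By Proposition~\ref{Prop:correction-factor-homology}, the index in question equals $|H_1(G,\mu(K)(p))|$. It therefore suffices to show that $H_1(G,\mu(K)(p))=0$ in each of the two cases. Both arguments are short; we work directly with the homology group, or with its Pontryagin dual $H^1(G,\mu(K)(p)^\vee)$, whichever is more convenient.

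\emph{Case (1), $p\nmid|G|$.} Since $\mu(K)(p)$ is a $\mathbb Z_p$-module, multiplication by $|G|$ is an automorphism of it, and hence of $H_1(G,\mu(K)(p))$ by functoriality. On the other hand, $H_i(G,M)$ is killed by $|G|$ for every $i\ge1$, by the standard restriction--corestriction argument. A group that is both killed by $|G|$ and uniquely $|G|$-divisible is zero, so $H_1(G,\mu(K)(p))=0$.

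\emph{Case (2), $\mu(k)(p)=0$.} Here we use the fact that $\mu(K)(p)$ is cyclic of order $p^n$. If $n=0$ there is nothing to prove, so assume $n\ge1$. The module $H_0(G,\mu(K)(p))=\mu(K)(p)_G$ is a quotient of a finite group, and the norm map $N_G$ factors through it. For a finite module $M$, the Tate sequence
\[
0\to \widehat H^{-1}(G,M)\to M_G\xrightarrow{N_G} M^G\to \widehat H^0(G,M)\to 0
\]
together with $M^G=\mu(k)(p)=0$ gives $\widehat H^{-1}(G,M)=M_G$ and $\widehat H^0(G,M)=0$.

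The main obstacle is passing from this to $H_1$ when $G$ is not cyclic. We would instead argue via the dual: $H_1(G,M)^\vee\simeq H^1(G,M^\vee)$ with $M^\vee\simeq\mathbb Z/p^n(-1)$. Pick $\sigma\in G$ acting on $M$ by $\zeta\mapsto\zeta^{a}$ with $a\not\equiv1\pmod p$. Such a $\sigma$ exists: if every element acted by some $a\equiv1\pmod p$, then the image of $G$ in $(\mathbb Z/p^n)^\times$ would be a $p$-group, hence would have a nontrivial fixed point on $M[p]$ (for $p$ odd), contradicting $M^G=0$; for $p=2$ the fixed point $-1$ already lies in $\mu(k)$, so case (2) forces $n=0$. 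The element $\sigma$ acts on $M^\vee$ by $a^{-1}$, so $\sigma-1$ acts invertibly on $M^\vee$.

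Since $\sigma$ is central in the abelian group $G$, its conjugation action on $H^1(G,M^\vee)$ is trivial. This action is also given by the map induced by $\sigma$ on coefficients, which is multiplication by $a^{-1}$. Hence $H^1(G,M^\vee)$ is killed by $a^{-1}-1$, which is a unit in $\mathbb Z_p$. Therefore $H^1(G,M^\vee)=0$, and so $H_1(G,M)=0$. This center-kills argument also handles case (1) uniformly whenever it applies. The key step to verify carefully is the existence of $\sigma$, which is where the hypothesis $\mu(k)(p)=0$ enters.
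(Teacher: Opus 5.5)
Your proof is correct, and case (1) is identical to the paper's. In case (2) you share the paper's key step, namely the existence of $\sigma\in G$ with $\sigma-1$ invertible on $\mu(K)(p)$; this is where $\mu(k)(p)=0$ is used. Your fixed-point argument and separate treatment of $p=2$ can be shortened: $a\equiv 1 \pmod p$ already means $\sigma$ fixes $\mu_p(K)$ pointwise, for every $p$.

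You diverge from the paper in how the vanishing is deduced. The paper restricts to the cyclic subgroup $H=\langle\sigma\rangle$ and shows $H_b(H,\mu(K)(p))=0$ for all $b$ from the periodic resolution. It then transfers this to $G$ via the Hochschild--Serre spectral sequence. You instead use the ``center kills'' principle. Inner automorphisms act trivially on (co)homology, and for central $\sigma$ the conjugation map is induced by the $G$-endomorphism $\sigma$ of the coefficient module. Hence the automorphism induced by $\sigma-1$ is zero, and the group vanishes. Your route avoids spectral sequences and gives vanishing in all degrees at once, but it needs $\sigma$ to be central (automatic here since $G$ is abelian). The paper's route would work for any normal subgroup with vanishing homology.

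Two minor remarks. The passage to the Pontryagin dual is unnecessary: the same argument applies directly to $H_1(G,\mu(K)(p))$, with $\sigma$ acting on coefficients by $a$. Also, your paragraph on $\widehat H^{-1}$ and $\widehat H^{0}$ plays no role in the final proof.
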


\begin{proof}
By Proposition \ref{Prop:correction-factor-homology}, it is enough to show that
\[
H_1(G,\mu(K)(p))=0.
\]

If \(p\nmid |G|\), then \(H_1(G,\mu(K)(p))=0\), since group homology is killed by
\(|G|\), while multiplication by \(|G|\) is an automorphism of the \(p\)-primary
group \(\mu(K)(p)\). Observe that this recovers the result in \cite[\S 4.4]{El26}, but in a completely new way.

Assume now that \(\mu(k)(p)=0\). If \(\mu(K)(p)=0\), there is nothing to prove.
Suppose therefore that \(\mu(K)(p)\neq 0\). Since \(K/k\) is Galois, one has
\[
\mu(K)(p)^G=\mu(k)(p)=0.
\]
We claim that there exists \(\sigma\in G\) such that \(\sigma-1\) acts bijectively
on \(\mu(K)(p)\). Indeed, since \(\mu(K)(p)\) is cyclic, we may write
\[
\mu(K)(p)\simeq \mathbb Z/p^n\mathbb Z(1)
\]
for some \(n\geq 1\). Thus the action of \(G\) on \(\mu(K)(p)\) is given by a
character
\[
\kappa:G\longrightarrow (\mathbb Z/p^n\mathbb Z)^\times.
\]
If \(\kappa(\sigma)-1\) were non-invertible for every \(\sigma\in G\), then
\[
\kappa(\sigma)\equiv 1 \pmod p
\qquad
\text{for every }\sigma\in G.
\]
Hence \(G\) would act trivially on the non-zero subgroup
\[
\mu_p(K):=\{x\in \mu(K)\mid x^p=1\}\subseteq \mu(K)(p).
\]
This would imply
\[
\mu(K)(p)^G\neq 0,
\]
contradicting
\[
\mu(K)(p)^G=\mu(k)(p)=0.
\]
Therefore there exists some \(\sigma\in G\) such that
\[
\kappa(\sigma)-1\in(\mathbb Z/p^n\mathbb Z)^\times,
\]
or equivalently such that \(\sigma-1\) acts bijectively on \(\mu(K)(p)\).

Let
\[
H:=\langle\sigma\rangle.
\]
Since \(G\) is abelian, \(H\) is normal in \(G\). We first show that
\[
H_i(H,\mu(K)(p))=0
\qquad
\text{for all } i\geq 0.
\]
Indeed, the homology of the cyclic group \(H\) is computed by the standard
periodic complex whose differentials are \(\sigma-1\) and
\[
N_H:=1+\sigma+\cdots+\sigma^{|H|-1}.
\]
Since \(\sigma-1\) is an automorphism of \(\mu(K)(p)\), this complex is acyclic.
Now the Hochschild-Serre spectral sequence associated with
\[
1\longrightarrow H\longrightarrow G\longrightarrow G/H\longrightarrow 1
\]
has
\[
E^2_{a,b}=H_a(G/H,H_b(H,\mu(K)(p)))
\Longrightarrow H_{a+b}(G,\mu(K)(p)).
\]
Since \(H_b(H,\mu(K)(p))=0\) for all \(b\geq 0\), all \(E^2\)-terms vanish. Hence
\[
H_i(G,\mu(K)(p))=0
\qquad
\text{for all } i\geq 0.
\]
In particular,
\[
H_1(G,\mu(K)(p))=0.
\]
The result follows from Proposition \ref{Prop:correction-factor-homology}.\qed
\end{proof}
\begin{corollary}
\label{Cor:correction-factor-bound}
Assume that
\[
|\mu(K)(p)|=p^n.
\]
Let \(d_p(G)\) denote the minimal number of generators of the \(p\)-Sylow
subgroup of \(G\). Then
\[
\left[
\mathrm{Ann}_{\mathbb Z_p[G]}(\mu(K)(p))\cap\Delta_pG:
\mathrm{Ann}_{\mathbb Z_p[G]}(\mu(K)(p))\Delta_pG
\right]
\leq
p^{\min(n,v_p(|G|))\,d_p(G)}.
\]
\end{corollary}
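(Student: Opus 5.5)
By Proposition \ref{Prop:correction-factor-homology}, the index in question equals $|H_1(G,\mu(K)(p))|$. So it suffices to bound the order of this homology group by $p^{\min(n,v_p(|G|))\,d_p(G)}$. We will do this by bounding its exponent and its number of generators separately.

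\emph{Reduction to the $p$-Sylow subgroup.} Write $G=P\times Q$, where $P$ is the $p$-Sylow subgroup and $|Q|$ is prime to $p$. We use the Hochschild--Serre spectral sequence for $1\to P\to G\to Q\to1$. Since $|Q|$ is invertible on the $p$-primary groups $H_b(P,\mu(K)(p))$, all terms $E^2_{a,b}$ with $a\geq1$ vanish. Hence
\[
H_1(G,\mu(K)(p))\simeq H_1(P,\mu(K)(p))_Q,
\]
a quotient of $H_1(P,\mu(K)(p))$. It therefore suffices to bound $|H_1(P,M)|$, where $M:=\mu(K)(p)\simeq\mathbb Z/p^n\mathbb Z(1)$ is cyclic of order $p^n$.

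\emph{Exponent.} The group $H_1(P,M)$ is killed by $|P|=p^{v_p(|G|)}$, by the usual restriction--corestriction argument. It is also killed by $p^n$, since multiplication by $p^n$ is zero on $M$ and homology is functorial. So its exponent divides $p^{\min(n,v_p(|G|))}$.

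\emph{Number of generators.} This is the main step. Write $d:=d_p(G)$ and choose generators $\sigma_1,\dots,\sigma_d$ of $P$. I would show that $H_1(P,M)$ is generated by $d$ elements, using the standard presentation of $H_1$ via the augmentation ideal. Tensoring $0\to\Delta P\to\mathbb Z[P]\to\mathbb Z\to0$ with $M$ gives
\[
H_1(P,M)\simeq\ker\bigl(\Delta P\otimes_{\mathbb Z[P]}M\to M\bigr).
\]
The ideal $\Delta P$ is generated as a $\mathbb Z[P]$-module by the elements $\sigma_i-1$. Hence $\Delta P\otimes_{\mathbb Z[P]}M$ is a quotient of $M^d$, so it is an abelian $p$-group generated by $d$ elements. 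A subgroup of a finite abelian $p$-group generated by $d$ elements is again generated by $d$ elements, because the rank of $X/pX$ does not increase on passing to subgroups of finite abelian groups. Therefore $H_1(P,M)$ is generated by at most $d$ elements.

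\emph{Conclusion.} A finite abelian group generated by $d$ elements with exponent dividing $p^{e}$ has order at most $p^{ed}$. With $e=\min(n,v_p(|G|))$ this gives the stated bound. The step needing the most care is the generator count. The fallback is to do it via cohomology and the dual $H^1(P,\mathbb Z/p^n(-1))$, using that a crossed homomorphism on $P$ is determined by its values on $\sigma_1,\dots,\sigma_d$. This embeds $H^1$ into a subquotient of $(\mathbb Z/p^n)^d$ and gives the same bound, with the exponent part as above.
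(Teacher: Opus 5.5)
Your proof is correct and follows essentially the paper's route: reduce to the $p$-Sylow subgroup $P$, bound the exponent of $H_1(P,M)$ by $p^{\min(n,v_p(|G|))}$, and bound its number of generators by $d_p(G)$ using that $\Delta P$ is generated by the $\tau_i-1$, so that $H_1(P,M)$ is a subquotient of $M^d$. The only difference is the reduction step. There your Hochschild--Serre argument, giving $H_1(G,M)\simeq H_1(P,M)_Q$, is actually more accurate than the paper's restriction--corestriction step: the latter only shows that restriction is injective, not the bijectivity the paper claims. Either argument gives the needed inequality $|H_1(G,M)|\le|H_1(P,M)|$.
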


\begin{proof}
By Proposition \ref{Prop:correction-factor-homology}, we have
\[
\left[
\mathrm{Ann}_{\mathbb Z_p[G]}(\mu(K)(p))\cap\Delta_pG:
\mathrm{Ann}_{\mathbb Z_p[G]}(\mu(K)(p))\Delta_pG
\right]
=
\big| H_1(G,\mu(K)(p)) \big|.
\]
Thus it is enough to bound the order of \(H_1(G,\mu(K)(p))\).

Let \(P\) be the \(p\)-Sylow subgroup of \(G\). Since \(\mu(K)(p)\) is
\(p\)-primary, the group \(H_1(G,\mu(K)(p))\) is \(p\)-primary. Moreover, the
restriction-corestriction relation gives
\[
\mathrm{cor}\circ \mathrm{res}=[G:P]
\]
on \(H_1(G,\mu(K)(p))\). Since \([G:P]\) is prime to \(p\), multiplication by
\([G:P]\) is an automorphism of \(H_1(G,\mu(K)(p))\). Hence the restriction map
is bijective, and therefore
\[
\big| H_1(G,\mu(K)(p)) \big|
=
\big| H_1(P,\mu(K)(p)) \big|.
\]

Set
\[
d:=d_p(G),
\]
and choose generators
\[
\tau_1,\ldots,\tau_d
\]
of \(P\). Set
\[
M:=\mu(K)(p).
\]
Since \(P\) is generated by \(\tau_1,\dots,\tau_d\), the augmentation ideal
\(\Delta P\subseteq \mathbb Z[P]\) is generated, as a right
\(\mathbb Z[P]\)-module, by
\[
\tau_1-1,\dots,\tau_d-1.
\]
Thus the beginning of a free resolution of the trivial
\(\mathbb Z[P]\)-module \(\mathbb Z\) may be chosen in the form
\[
F_2
\longrightarrow
\mathbb Z[P]^d
\longrightarrow
\mathbb Z[P]
\longrightarrow
\mathbb Z
\longrightarrow 0,
\]
where the map \(\mathbb Z[P]^d\to \mathbb Z[P]\) sends the \(i\)-th basis
vector to \(\tau_i-1\) and \(\mathbb Z[P]\to \mathbb Z\) is the augmentation map. After tensoring over \(\mathbb Z[P]\) with \(M\), the
degree-one term becomes \(M^d\). Hence
\[
H_1(P,M)
\]
is a subquotient of \(M^d\).
Since \(M\simeq \mathbb Z/p^n\mathbb Z\), the group \(M^d\) is a finite abelian
\(p\)-group generated by \(d\) elements and killed by \(p^n\). Any subquotient
of \(M^d\) is therefore generated by at most \(d\) elements and is killed by
\(p^n\). Since \(H_1(P,M)\) is a subquotient of \(M^d\), it follows that
\(H_1(P,M)\) is generated by at most \(d\) elements, each of order dividing
\(p^n\).

On the other hand, group homology in positive degree is killed by the order of
the group. Hence \(H_1(P,\mu(K)(p))\) is killed by
\[
|P|=p^{v_p(|G|)}.
\]
Consequently each of its generators has order dividing
\[
p^{\min(n,v_p(|G|))}.
\]
It follows that
\[
\big| H_1(P,\mu(K)(p)) \big|
\leq
p^{\min(n,v_p(|G|))\,d}.
\]
Therefore
\[
\big| H_1(G,\mu(K)(p)) \big|
\leq
p^{\min(n,v_p(|G|))\,d_p(G)}..
\]
The result follows.\qed
\end{proof}

\subsubsection{A \(p\)-primary refinement of the rank-one index formula}
Combining the previous results yields the following theorem.
\begin{theorem}
\label{Theo:p-primary-rank-one-index-formula}
Let \(p\) be a rational prime, and write
\[
|\mu(K)(p)|=p^n.
\]
Then the following assertions hold.
\begin{enumerate}
    \item One has
    \[
    \left(
    (O_{K,S}^{\times})_{\mathrm{tf}}:
    \varrho_{K/k,S}
    \right)_{\mathbb{Z}}
    \otimes_{\mathbb{Z}}\mathbb{Z}_p
    =
    \frac{h_{K,S}}{h_{k,S}}\,
    \big| H^1\left(G,\mathbb{Z}/p^n\mathbb{Z}(-1)\right)\! \big| \,
    \mathbb{Z}_p.
    \]

    \item If either
    \[
    p\nmid |G|
    \qquad
    \text{or}\qquad
    \mu(k)(p)=0,
    \]
     then
    \[
    \left(
    (O_{K,S}^{\times})_{\mathrm{tf}}:
    \varrho_{K/k,S}
    \right)_{\mathbb{Z}}
    \otimes_{\mathbb{Z}}\mathbb{Z}_p
    =
    \frac{h_{K,S}}{h_{k,S}}\mathbb{Z}_p.
    \]

    \item Let \(d_p(G)\) denote the minimal number of generators of the
    \(p\)-Sylow subgroup of \(G\). Then
    \[
    v_p\left(
    \left(
    (O_{K,S}^{\times})_{\mathrm{tf}}:
    \varrho_{K/k,S}
    \right)_{\mathbb{Z}}
    \right)
    \leq
    v_p\left(\frac{h_{K,S}}{h_{k,S}}\right)
    +
    \min(n,v_p(|G|))\,d_p(G).
    \]
\end{enumerate}
\end{theorem}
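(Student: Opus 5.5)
The plan is to combine Proposition~\ref{Prop:generalized-index-formula-rank-one} (case \(|S|=1\)) with Proposition~\ref{Prop:XKS-annihilator-index-two-cases}(1). Together they give
\[
\left((O_{K,S}^{\times})_{\mathrm{tf}}:\varrho_{K/k,S}\right)_{\mathbb Z}
=\frac{h_{K,S}w_k}{h_{k,S}w_K}\cdot\frac{w_K}{w_k}\left[A\cap\Delta G:A\Delta G\right]\mathbb Z
=\frac{h_{K,S}}{h_{k,S}}\left[A\cap\Delta G:A\Delta G\right]\mathbb Z,
\]
where \(A=\mathrm{Ann}_{\mathbb Z[G]}(\mu(K))\). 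The factors \(w_K,w_k\) cancel, so the global index equals \(h_{K,S}/h_{k,S}\) times a finite index.

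Next I would tensor with \(\mathbb Z_p\). I need to identify \(\mathbb Z_p\otimes_{\mathbb Z}(A\cap\Delta G)/(A\Delta G)\) with
\[
\bigl(\mathrm{Ann}_{\mathbb Z_p[G]}(\mu(K)(p))\cap\Delta_pG\bigr)/\mathrm{Ann}_{\mathbb Z_p[G]}(\mu(K)(p))\Delta_pG .
\]
Since \(\mathbb Z_p\) is flat over \(\mathbb Z\), we have \(\mathbb Z_pA=\mathrm{Ann}_{\mathbb Z_p[G]}(\mathbb Z_p\mu(K))=\mathrm{Ann}_{\mathbb Z_p[G]}(\mu(K)(p))\). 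This follows by tensoring the exact sequence \(0\to A\to\mathbb Z[G]\to\mu(K)\to0\), which holds because \(\mu(K)\) is cyclic. Flatness also gives compatibility with intersections (kernel of \(\mathbb Z[G]\to \mathbb Z[G]/A\oplus\mathbb Z\)), and \(\mathbb Z_p(A\Delta G)=(\mathbb Z_pA)(\Delta_pG)\) because images of products commute with base change. The \(p\)-part of the order of a finite group is the order of its \(\mathbb Z_p\)-tensor, so the correction index becomes the \(p\)-adic one.

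Proposition~\ref{Prop:correction-factor-homology} then identifies this \(p\)-adic index with \(|H^1(G,\mathbb Z/p^n\mathbb Z(-1))|\), which is a power of \(p\). This proves (1). Part (2) follows from (1) and Corollary~\ref{Cor:correction-factor-trivial}. For (3), I would take \(v_p\) of (1) and apply Corollary~\ref{Cor:correction-factor-bound}; in fact (3) holds with equality in the first summand.

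The main obstacle I expect is the base-change bookkeeping in the second step, specifically that intersection and product ideals commute with \(\otimes\mathbb Z_p\). A subtler point is that \(\mathrm{Ann}\) of \(\mu(K)\) over \(\mathbb Z_p[G]\) equals that of \(\mu(K)(p)\), since the prime-to-\(p\) part of \(\mu(K)\) dies after tensoring. Both reduce to flatness of \(\mathbb Z_p\) together with the cyclic presentation of \(\mu(K)\), so I would verify them explicitly through the exact sequences above.
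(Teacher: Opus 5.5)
Your proposal is correct and follows the paper's proof: (1) comes from Proposition~\ref{Prop:generalized-index-formula-rank-one} together with Proposition~\ref{Prop:XKS-annihilator-index-two-cases}(1) and Proposition~\ref{Prop:correction-factor-homology}, and (2) and (3) come from Corollaries~\ref{Cor:correction-factor-trivial} and~\ref{Cor:correction-factor-bound}. The paper leaves implicit that the $p$-part of the integral index $[A\cap\Delta G:A\Delta G]$ equals the $\mathbb Z_p$-index in Proposition~\ref{Prop:correction-factor-homology}; your flat base-change argument for this step is sound.
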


\begin{proof}
The first claim follows from
Proposition \ref{Prop:generalized-index-formula-rank-one} combined with Proposition \ref{Prop:XKS-annihilator-index-two-cases}($1$) and Proposition \ref{Prop:correction-factor-homology}. The second claim is obtained from the first by applying Corollary \ref{Cor:correction-factor-trivial}, while the last claim is obtained by Corollary \ref{Cor:correction-factor-bound}. \qed
\end{proof}

 \subsubsection{Integral rank-one Stark elements and class number divisibilities}

We begin with a simple comparison lemma for generalized indices. It will be used
to separate the contribution of an idempotent from its complementary component.

Let \(\mathcal{O}\) be a discrete valuation ring with uniformizer \(\pi\) and
field of fractions \(E\). Let \(G\) be a finite abelian group, and let \(V\) be a
finite-dimensional \(E[G]\)-module. Let \(M\) and \(N\) be \(\mathcal{O}[G]\)-modules
which are full \(\mathcal{O}\)-lattices in \(V\). Let
\[
e\in E[G]
\]
be an idempotent, and set
\[
\widehat{M}:=eM\oplus(1-e)M,
\qquad
\widehat{N}:=eN\oplus(1-e)N.
\]
\begin{lemma}
\label{Lem:idempotent-index-comparison}
We have
\[
(eM:eN)_{\mathcal{O}}
\bigl((1-e)M:(1-e)N\bigr)_{\mathcal{O}}
=
\pi^{
\ell_{\mathcal{O}}\left(eM/(eM\cap M)\right)
-
\ell_{\mathcal{O}}\left(eN/(eN\cap N)\right)
}
(M:N)_{\mathcal{O}},
\]
where \(\ell_{\mathcal{O}}\) denotes length over \(\mathcal{O}\).
Moreover, if \(t\geq 1\) is an integer such that
\[
te\in \mathcal{O}[G],
\]
then
\[
0\leq
\ell_{\mathcal{O}}\left(eM/(eM\cap M)\right)
\leq
v_{\pi}(t)\dim_E(eV).
\]
\end{lemma}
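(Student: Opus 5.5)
The plan is to compare both sides through the intermediate lattices \(\widehat M\) and \(\widehat N\), using multiplicativity of generalized indices. First I will check that everything is a full lattice in the right space. Since \(e\in E[G]\) is an idempotent, \(V=eV\oplus(1-e)V\). The set \(eM\) is the image of the finitely generated torsion-free \(\mathcal O\)-module \(M\) under the \(E\)-linear projection \(e\), so it is a finitely generated, torsion-free, hence free, \(\mathcal O\)-module. It spans \(eV\), so it is a full lattice in \(eV\). The same holds for \((1-e)M\), \(eN\) and \((1-e)N\). Consequently \(\widehat M\) and \(\widehat N\) are full lattices in \(V\). Moreover, every \(m\in M\) can be written \(m=em+(1-e)m\), which gives the inclusions \(M\subseteq\widehat M\) and \(N\subseteq\widehat N\).

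The key step is to identify the finite quotient \(\widehat M/M\). I will show that the map
\[
eM\longrightarrow \widehat M/M,\qquad em\longmapsto (em,0)+M,
\]
is a surjection with kernel \(eM\cap M\). For surjectivity, take \((em,(1-e)m')\in\widehat M\); it equals \(m'+e(m-m')\), so it is congruent to \(e(m-m')\in eM\) modulo \(M\). The kernel is clearly \(eM\cap M\). Hence \(\widehat M/M\simeq eM/(eM\cap M)\), and therefore \((\widehat M:M)_{\mathcal O}=\pi^{\ell_{\mathcal O}(eM/(eM\cap M))}\mathcal O\). The same argument gives the corresponding statement for \(N\).

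Next, since \(\widehat M\) and \(\widehat N\) both split along \(V=eV\oplus(1-e)V\), I can choose an automorphism carrying \(\widehat M\) to \(\widehat N\) that is block diagonal. Its determinant is the product of the two block determinants, so
\[
(\widehat M:\widehat N)_{\mathcal O}=(eM:eN)_{\mathcal O}\bigl((1-e)M:(1-e)N\bigr)_{\mathcal O}.
\]
Multiplicativity of generalized indices then gives
\[
(M:N)_{\mathcal O}=(M:\widehat M)_{\mathcal O}\,(\widehat M:\widehat N)_{\mathcal O}\,(\widehat N:N)_{\mathcal O}.
\]
Substituting \((M:\widehat M)_{\mathcal O}=\pi^{-\ell_{\mathcal O}(eM/(eM\cap M))}\mathcal O\) and \((\widehat N:N)_{\mathcal O}=\pi^{\ell_{\mathcal O}(eN/(eN\cap N))}\mathcal O\), and rearranging, yields the stated identity.

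For the bound, the length is nonnegative trivially. For the upper bound, if \(te\in\mathcal O[G]\), then \(t\cdot em=(te)m\in M\) because \(M\) is an \(\mathcal O[G]\)-module. So \(eM/(eM\cap M)\) is killed by \(t\), equivalently by \(\pi^{v_\pi(t)}\). It is also a quotient of the free module \(eM\) of rank \(\dim_E(eV)\), so it is generated by at most that many elements. A finitely generated \(\mathcal O\)-module with \(d\) generators killed by \(\pi^{a}\) has length at most \(ad\), which gives the bound.

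The only real subtlety I expect is bookkeeping of the direction of the indices and hence the signs of the exponents. The conventions for \((M:N)\) must be applied consistently: with \(N\subseteq M\) one has \((M:N)=[M:N]\), and \((N:M)=(M:N)^{-1}\).
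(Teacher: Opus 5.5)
Your proof is correct and follows the paper's argument. Both pass through $\widehat M$ and $\widehat N$, identify $\widehat M/M\simeq eM/(eM\cap M)$, factor $(\widehat M:\widehat N)_{\mathcal O}$ along $V=eV\oplus(1-e)V$, and bound the length using $t\,eM\subseteq eM\cap M$. You also check the surjectivity of $eM\to\widehat M/M$ (via $em+(1-e)m'=m'+e(m-m')$), which the paper only asserts; and your generators-times-exponent bound is equivalent to the paper's observation that $eM/(eM\cap M)$ is a quotient of $eM/t\,eM$.
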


\begin{proof}
Since
\[
\widehat{M}=eM\oplus(1-e)M
\qquad\text{and}\qquad
\widehat{N}=eN\oplus(1-e)N,
\]
the generalized index from \(\widehat{M}\) to \(\widehat{N}\) decomposes as
\[
(\widehat{M}:\widehat{N})_{\mathcal{O}}
=
(eM:eN)_{\mathcal{O}}
\bigl((1-e)M:(1-e)N\bigr)_{\mathcal{O}}.
\]
On the other hand, by multiplicativity of generalized indices, see e.g.~\cite[\S3, Proposition~1(i)]{Froehlich},
\[
(\widehat{M}:\widehat{N})_{\mathcal{O}}
=
(\widehat{M}:M)_{\mathcal{O}}
(M:N)_{\mathcal{O}}
(N:\widehat{N})_{\mathcal{O}}.
\]
Since \(M\subseteq \widehat{M}\) and \(N\subseteq \widehat{N}\), we have
\[
(\widehat{M}:M)_{\mathcal{O}}
=
\pi^{\ell_{\mathcal{O}}(\widehat{M}/M)}\mathcal{O},
\qquad
(N:\widehat{N})_{\mathcal{O}}
=
\pi^{-\ell_{\mathcal{O}}(\widehat{N}/N)}\mathcal{O}.
\]
Moreover, the inclusion \(eM\hookrightarrow \widehat M\) induces an isomorphism
\[
eM/(eM\cap M)\xrightarrow{\;\sim\;} \widehat M/M,
\]
and similarly
\[
eN/(eN\cap N)\xrightarrow{\;\sim\;} \widehat N/N.
\]
Combining these identities proves the first assertion.
Assume now that \(te\in \mathcal{O}[G]\). Since \(M\) is an \(\mathcal{O}[G]\)-module,
multiplication by \(te\) preserves \(M\). Hence
\[
t\,eM\subseteq eM\cap M.
\]
Therefore \(eM/(eM\cap M)\) is a quotient of \(eM/t\,eM\). Since \(eM\) is a full
\(\mathcal{O}\)-lattice in \(eV\), we get
\[
\ell_{\mathcal{O}}\left(eM/(eM\cap M)\right)
\leq
\ell_{\mathcal{O}}(eM/t\,eM)
=
v_{\pi}(t)\dim_E(eV).
\]
The lower bound is immediate.\qed
\end{proof}

Let us fix some additional notation for the next result:
\begin{itemize}
\item Set $|\mu(K)(p)|=p^n$.
\item Like above, let $\Sc\supseteq S$ be any finite set of places of $k$ such that $|\Sc|\neq 2$.
   \end{itemize}
\begin{theorem}
\label{Thm:Ben-1-5-general-form}
Assume that every place in \(\Sc\setminus S\) does not decompose in
\(K/k\). Then
\begin{align*}
\Big(
e_{\Sc}^1&\mathbb Z_p(O_{K,\Sc}^{\times})_{\mathrm{tf}}
:
e_{\Sc}^1\mathbb Z_p\varrho_{K/k,\Sc}
\Big)_{\mathbb Z_p}\\
&=
\frac{h_{K,S}}{h_{k,S}}
\big| H^1(G,\mathbb Z/p^n\mathbb Z(-1)) \big|
\left[
e_{\Sc}^1\mathbb Z_p(O_{K,\Sc}^{\times})_{\mathrm{tf}}
:
e_{\Sc}^1\mathbb Z_p(O_{K,S}^{\times})_{\mathrm{tf}}
\right]\left(
e_{\Sc}^1\mathbb Z_p\varrho_{K/k,S}
:
e_{\Sc}^1\mathbb Z_p\varrho_{K/k,\Sc}
\right)_{\mathbb Z_p}.
\end{align*}
\end{theorem}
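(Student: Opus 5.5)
The plan is to factor the index through the intermediate lattices $e_{\Sc}^1\mathbb Z_p(O_{K,S}^{\times})_{\mathrm{tf}}$ and $e_{\Sc}^1\mathbb Z_p\varrho_{K/k,S}$, using multiplicativity of generalized indices. Throughout I work in the setting of \S\ref{subsection:S=1}, so $S=S'=\{\mathfrak v\}$ with $\mathfrak v$ totally split.

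The first step is to check that all four lattices are full $\mathbb Z_p$-lattices in one $\mathbb Q_p$-vector space. By the preliminary observation \eqref{preliminary-observation} and the exact sequence \eqref{S1-S2-units} tensored with $\mathbb Q_p$ (as in the proof of Lemma~\ref{Lem:unit-index-bound}), we have $e_{\Sc}^1\mathbb Q_p(O_{K,\Sc}^{\times})_{\mathrm{tf}}=e_{\Sc}^1\mathbb Q_p(O_{K,S}^{\times})_{\mathrm{tf}}$. The proof of Lemma~\ref{Lemma_4_8} shows that the regulator targets agree on the $e_{\Sc}^1$-part, so $e_{\Sc}^1\varrho_{K/k,S}$ and $e_{\Sc}^1\varrho_{K/k,\Sc}$ also span this space. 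Multiplicativity then gives
\begin{align*}
\Big(e_{\Sc}^1\mathbb Z_p(O_{K,\Sc}^{\times})_{\mathrm{tf}}:e_{\Sc}^1\mathbb Z_p\varrho_{K/k,\Sc}\Big)_{\mathbb Z_p}
&=\Big[e_{\Sc}^1\mathbb Z_p(O_{K,\Sc}^{\times})_{\mathrm{tf}}:e_{\Sc}^1\mathbb Z_p(O_{K,S}^{\times})_{\mathrm{tf}}\Big]\\
&\quad\cdot\Big(e_{\Sc}^1\mathbb Z_p(O_{K,S}^{\times})_{\mathrm{tf}}:e_{\Sc}^1\mathbb Z_p\varrho_{K/k,S}\Big)_{\mathbb Z_p}\\
&\quad\cdot\Big(e_{\Sc}^1\mathbb Z_p\varrho_{K/k,S}:e_{\Sc}^1\mathbb Z_p\varrho_{K/k,\Sc}\Big)_{\mathbb Z_p}.
\end{align*}
The first factor is a classical index by Lemma~\ref{Lem:unit-index-bound}, and the last factor is left unevaluated, as in the statement (Lemma~\ref{Lemma_4_8} computes it if wanted). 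It therefore remains to identify the middle factor with $\frac{h_{K,S}}{h_{k,S}}\,|H^1(G,\mathbb Z/p^n\mathbb Z(-1))|\,\mathbb Z_p$.

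For the middle factor, I will show that $e_{\Sc}^1$ acts as the identity on $\mathbb Q_p(O_{K,S}^{\times})_{\mathrm{tf}}$. Since $|\Sc|\neq2$ and the places of $\Sc\setminus S'$ do not decompose, we have $e_{\Sc}^1=1-e_{\chi_0}$, as recorded in the proof of Lemma~\ref{Lemma_4_8}. Moreover $e_{\chi_0}\mathbb C O_{K,S}^{\times}\simeq\mathbb C O_{k,S}^{\times}$ has dimension $|S|-1=0$; equivalently, $e_{\chi_0}\mathbb C X_{K,S}=0$ because $X_{K,S}\simeq\Delta G$. Hence $e_{\Sc}^1$ fixes $\mathbb Z_p(O_{K,S}^{\times})_{\mathrm{tf}}$. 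Since $\varrho_{K/k,S}\subseteq\mathbb C O_{K,S}^{\times}$, it also fixes $\mathbb Z_p\varrho_{K/k,S}$. The middle factor is therefore the full index $\big((O_{K,S}^{\times})_{\mathrm{tf}}:\varrho_{K/k,S}\big)_{\mathbb Z}\otimes\mathbb Z_p$, which Theorem~\ref{Theo:p-primary-rank-one-index-formula}(1) evaluates as required.

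The step I expect to need the most care is the identification of ambient spaces: all four lattices must sit inside the same $e_{\Sc}^1\mathbb Q_p$-space, compatibly with the fixed isomorphism $\mathbb C\simeq\mathbb C_p$. Otherwise the generalized indices are not defined and multiplicativity cannot be applied. I would handle this by transporting everything through $\widetilde\lambda_{K,S}$, using $e_{\Sc}^1\widetilde\lambda_{K,\Sc}=e_{\Sc}^1\widetilde\lambda_{K,S}$ and $e_{\Sc}^1X_{K,\Sc}=e_{\Sc}^1X_{K,S}$.
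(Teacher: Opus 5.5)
Your proposal is correct and takes essentially the same route as the paper. You use the same three-factor multiplicativity decomposition through $e_{\Sigma}^1\mathbb Z_p(O_{K,S}^{\times})_{\mathrm{tf}}$ and $e_{\Sigma}^1\mathbb Z_p\varrho_{K/k,S}$, and you evaluate the middle factor by Theorem~\ref{Theo:p-primary-rank-one-index-formula}(1) after noting that $e_{\Sigma}^1=1-e_{\chi_0}$ acts as the identity since $X_{K,S}\simeq\Delta G$; the paper's detour through Lemma~\ref{Lem:idempotent-index-comparison} only shows that all correction terms vanish, which is exactly your direct observation.
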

\begin{proof}
We apply Lemma \ref{Lem:idempotent-index-comparison} with
\[
\mathcal O=\mathbb Z_p,\qquad E=\mathbb Q_p,\qquad
V=\mathbb Q_p(O_{K,S}^{\times})_{\mathrm{tf}},
\]
to the two \(\mathbb Z_p\)-lattices
\[
M=\mathbb Z_p(O_{K,S}^{\times})_{\mathrm{tf}},
\qquad
N=\mathbb Z_p\varrho_{K/k,S},
\]
and to the idempotent \(e=e_{\Sc}^1\).
Together with Theorem \ref{Theo:p-primary-rank-one-index-formula}\((1)\), this
gives
\begin{equation} \label{e-units-rho-index}
\begin{gathered}
\left(
e_{\Sc}^1\mathbb Z_p(O_{K,S}^{\times})_{\mathrm{tf}}
:
e_{\Sc}^1\mathbb Z_p\varrho_{K/k,S}
\right)_{\mathbb Z_p}
\cdot
\left(
(1-e_{\Sc}^1)\mathbb Z_p(O_{K,S}^{\times})_{\mathrm{tf}}
:
(1-e_{\Sc}^1)\mathbb Z_p\varrho_{K/k,S}
\right)_{\mathbb Z_p}
\\
=
p^{
\ell_{\mathbb Z_p}\left(
e_{\Sc}^1M/(e_{\Sc}^1M\cap M)
\right)
-
\ell_{\mathbb Z_p}\left(
e_{\Sc}^1N/(e_{\Sc}^1N\cap N)
\right)
}
\frac{h_{K,S}}{h_{k,S}}
\big| H^1(G,\mathbb Z/p^n\mathbb Z(-1)) \big| \, \mathbb Z_p.
\end{gathered}
\end{equation}

We now simplify this formula. Since we are in the case \(|S|=1\), with
\(S=\{\mathfrak v\}\) and \(\mathfrak v\) totally split in \(K/k\), we have
\[
X_{K,S}\simeq \Delta G.
\]
Moreover, since every place in \(\Sc\setminus S\) does not decompose in
\(K/k\), we have
\[
e_{\Sc}^1=1-e_{\chi_0}.
\]
Thus \(e_{\Sc}^1\) acts as the identity on
\[
\mathbb Q_pX_{K,S}\simeq \mathbb Q_p\Delta G
\]
(since $(1-e_{\chi_0})(1-\sigma)=(1-\sigma)$ for any $\sigma\in G$).
Via the regulator isomorphism, it follows that \(e_{\Sc}^1\) acts also as the
identity on
\[
\mathbb Q_p(O_{K,S}^{\times})_{\mathrm{tf}}.
\]
The same holds for \(\mathbb Q_p\varrho_{K/k,S}\), by the definition of
\(\varrho_{K/k,S}\).
Consequently,
\[
(1-e_{\Sc}^1)\mathbb Q_p(O_{K,S}^{\times})_{\mathrm{tf}}=0
\qquad
\text{and}
\qquad
(1-e_{\Sc}^1)\mathbb Q_p\varrho_{K/k,S}=0.
\]
Therefore
\[
\left(
(1-e_{\Sc}^1)\mathbb Z_p(O_{K,S}^{\times})_{\mathrm{tf}}
:
(1-e_{\Sc}^1)\mathbb Z_p\varrho_{K/k,S}
\right)_{\mathbb Z_p}
=
\mathbb Z_p.
\]

Furthermore, since \(e_{\Sc}^1\) acts as the identity on the ambient
space \(V\), we have
\[
e_{\Sc}^1M=M,
\qquad
e_{\Sc}^1N=N.
\]
Hence
\[
\ell_{\mathbb Z_p}\left(
e_{\Sc}^1M/(e_{\Sc}^1M\cap M)
\right)=0
\]
and
\[
\ell_{\mathbb Z_p}\left(
e_{\Sc}^1N/(e_{\Sc}^1N\cap N)
\right)=0.
\]
The formula \eqref{e-units-rho-index} therefore reduces to
\[
\left(
e_{\Sc}^1\mathbb Z_p(O_{K,S}^{\times})_{\mathrm{tf}}
:
e_{\Sc}^1\mathbb Z_p\varrho_{K/k,S}
\right)_{\mathbb Z_p}
=
\frac{h_{K,S}}{h_{k,S}}
\big| H^1(G,\mathbb Z/p^n\mathbb Z(-1)) \big| \,\mathbb Z_p.
\]
Moreover, by Lemma \ref{Lem:unit-index-bound}, the index
\[
\left[
e_{\Sc}^1\mathbb Z_p(O_{K,\Sc}^{\times})_{\mathrm{tf}}
:
e_{\Sc}^1\mathbb Z_p(O_{K,S}^{\times})_{\mathrm{tf}}
\right]
\]
is well defined. Therefore, by multiplicativity of indices,
\begin{align*}
\Big(&
e_{\Sc}^1\mathbb Z_p(O_{K,\Sc}^{\times})_{\mathrm{tf}}
:\;
e_{\Sc}^1\mathbb Z_p\varrho_{K/k,\Sc}
\Big)_{\mathbb Z_p}\\
=&
\left[
e_{\Sc}^1\mathbb Z_p(O_{K,\Sc}^{\times})_{\mathrm{tf}}
:
e_{\Sc}^1\mathbb Z_p(O_{K,S}^{\times})_{\mathrm{tf}}
\right]
\left(
e_{\Sc}^1\mathbb Z_p(O_{K,S}^{\times})_{\mathrm{tf}}
:
e_{\Sc}^1\mathbb Z_p\varrho_{K/k,S}
\right)_{\mathbb Z_p}\left(
e_{\Sc}^1\mathbb Z_p\varrho_{K/k,S}
:
e_{\Sc}^1\mathbb Z_p\varrho_{K/k,\Sc}
\right)_{\mathbb Z_p}.
\end{align*}
Substituting the preceding formula gives the result.
\qed
\end{proof}
As a consequence of Theorem \ref{Thm:Ben-1-5-general-form}, we obtain the
following criterion. Recall that we write 
\[
|\mu(K)(p)|=p^n.
\]
Also, let us suppose in what follows that $|\mathrm{Ram}(K/k)|\neq 1$, so that $|\Sc|\neq 2$ whenever $\Sc=S\cup \mathrm{Ram}(K/k)$.
\begin{theorem}
\label{Thm:RS-obstruction-divisibility}
The following assertions hold.

\begin{enumerate}
\item If every place in \(\Sc\setminus S\) does not decompose in
\(K/k\) and
\[
h_{k,S}(p)
\nmid
h_{K,S}(p)\cdot
p^{
\min(n,v_p(|G|))d_p(G)
+
2v_p(|G|)|\Sc\setminus S|
},
\]
then the rank-one Rubin--Stark conjecture does not hold for \(K/k\) and
\(\Sc\).

\item Assume that the ramified places in \(K/k\) do not decompose in
\(K/k\).
Then
\[
h_{k,S}(p)
\mid
h_{K,S}(p)\cdot
p^{
\min(n,v_p(|G|))d_p(G)
+
2v_p(|G|)|\mathrm{Ram}(K/k)|
}.
\]
\end{enumerate}
\end{theorem}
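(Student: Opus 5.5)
For part (1) I will argue by contraposition. Assume the rank-one Rubin--Stark conjecture holds for $K/k$ and $\Sc$, and deduce the stated divisibility. Lemma \ref{Lem:rho-generated-by-rank-one-RS-elements}, applied with $\Sc$ in place of $S$, gives
\[
e_{\Sc}^1\mathbb Z_p\varrho_{K/k,\Sc}\subseteq e_{\Sc}^1\mathbb Z_p(O_{K,\Sc}^{\times})_{\mathrm{tf}}.
\]
By Corollary \ref{Cor:eS-index-well-defined}, the generalized index on the left of Theorem \ref{Thm:Ben-1-5-general-form} is therefore the ideal generated by a positive integer. Its $p$-adic valuation is thus $\geq 0$.

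Next I take $v_p$ of the right-hand side of Theorem \ref{Thm:Ben-1-5-general-form} and bound each extra factor from above.
\begin{itemize}
\item The factor $|H^1(G,\mathbb Z/p^n\mathbb Z(-1))|$ equals the correction factor of Proposition \ref{Prop:correction-factor-homology}. By Corollary \ref{Cor:correction-factor-bound}, its valuation is at most $\min(n,v_p(|G|))d_p(G)$.
\item The unit index $[e_{\Sc}^1\mathbb Z_p(O_{K,\Sc}^{\times})_{\mathrm{tf}}:e_{\Sc}^1\mathbb Z_p(O_{K,S}^{\times})_{\mathrm{tf}}]$ has valuation at most $v_p(|G|)|\Sc\setminus S|$. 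This is the last assertion of Lemma \ref{Lem:unit-index-bound}, which applies because the places of $\Sc\setminus S$ do not decompose.
\item The $\varrho$-index equals $\prod_{v\in\Sc\setminus S}[G:I_v]$ by Lemma \ref{Lemma_4_8}, whose hypothesis holds since $S=S'$ here. Each factor $[G:I_v]$ divides $|G|$, so the valuation is at most $v_p(|G|)|\Sc\setminus S|$.
\end{itemize}
Adding these bounds gives
\[
0\le v_p(h_{K,S})-v_p(h_{k,S})+\min(n,v_p(|G|))d_p(G)+2v_p(|G|)|\Sc\setminus S|.
\]
This is exactly the divisibility $h_{k,S}(p)\mid h_{K,S}(p)\,p^{\cdots}$, so its failure forces the conjecture to fail.

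For part (2) I set $\Sc:=S\cup\mathrm{Ram}(K/k)$. Since $\mathfrak v$ splits completely, it is unramified, so $|\Sc\setminus S|=|\mathrm{Ram}(K/k)|$. The standing assumption $|\mathrm{Ram}(K/k)|\ne1$ gives $|\Sc|\neq2$, and by hypothesis every place of $\Sc\setminus S$ does not decompose. So the setup of Theorem \ref{Thm:Ben-1-5-general-form} applies.

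It remains to know that the rank-one Rubin--Stark conjecture holds for $K/k$ and $\Sc$. Since $\Sc\supseteq\mathrm{Ram}(K/k)$, I expect to invoke this here: in rank one with $V=\{\mathfrak v\}$ a split place, the relevant Rubin--Stark elements should be available (for example Stark's theorem via Tate's book, or the known cases in Remark \ref{REMA_3_2}). Part (1) then yields the divisibility.

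The main obstacle is this last step. Part (2) is unconditional only if rank-one Rubin--Stark integrality is known for $\Sc\supseteq\mathrm{Ram}(K/k)$ in this generality. I would first try to justify it through Burns' results (function fields, abelian over $\mathbb Q$) and Tate's rank-one results. If that fails, I would argue directly with $\delta_{K/k,T}$-smoothed elements and Proposition \ref{Prop:RS-integrality-rank-one}, or else state the assumption explicitly.

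A secondary point to check is that part (1) only needs the order of the $\varrho$-quotient to be bounded above. Orientation matters: the $\varrho$-index enters with a positive exponent, so upper bounds on every factor are exactly what is needed.
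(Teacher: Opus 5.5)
Your argument for part (1) is the paper's argument. It uses the same ingredients: Theorem~\ref{Thm:Ben-1-5-general-form}, the three upper bounds from Corollary~\ref{Cor:correction-factor-bound}, Lemma~\ref{Lemma_4_8} and Lemma~\ref{Lem:unit-index-bound}, and integrality of the left-hand index from Corollary~\ref{Cor:eS-index-well-defined}. Your side remarks are also correct: $S=S'$ is what makes Lemma~\ref{Lemma_4_8} applicable, and $\mathfrak v\notin\mathrm{Ram}(K/k)$.

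The gap is in part (2), at exactly the step you flag. You leave open whether the rank-one Rubin--Stark conjecture is known for $K/k$ and $\Sc=S\cup\mathrm{Ram}(K/k)$, and your fallbacks do not close it. Proposition~\ref{Prop:RS-integrality-rank-one} takes the conjecture as a hypothesis, so it cannot be used to establish it. Adding the conjecture as an assumption would only prove a conditional version of (2).

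The missing observation is that the setting itself pins down $k$. Throughout this subsection $S=\{\mathfrak v\}$, and in the number field case $S\supseteq S_\infty$. Hence $k$ has exactly one archimedean place, so either $k=\mathbb Q$ or $k$ is imaginary quadratic. When $k=\mathbb Q$, $K$ is totally real, since $\infty$ splits completely.

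Since $\Sc\supseteq\mathrm{Ram}(K/k)\cup S_\infty$, the known cases listed in Remark~\ref{REMA_3_2} then cover every possibility:
\begin{itemize}
\item Burns' theorem handles function fields and abelian extensions of $\mathbb Q$;
\item the elliptic-unit construction handles rank one over imaginary quadratic fields.
\end{itemize}
With this, part (1) applies and yields (2) unconditionally; this is exactly how the paper concludes.
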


\begin{proof}
By Theorem \ref{Thm:Ben-1-5-general-form}, we have
\begin{align*}
\Big(
e_{\Sc}^1&\mathbb Z_p(O_{K,\Sc}^{\times})_{\mathrm{tf}}
:
e_{\Sc}^1\mathbb Z_p\varrho_{K/k,\Sc}
\Big)_{\mathbb Z_p}\\
&=
\frac{h_{K,S}}{h_{k,S}}
\big| H^1(G,\mathbb Z/p^n\mathbb Z(-1)) \big|
\left[
e_{\Sc}^1\mathbb Z_p(O_{K,\Sc}^{\times})_{\mathrm{tf}}
:
e_{\Sc}^1\mathbb Z_p(O_{K,S}^{\times})_{\mathrm{tf}}
\right]\left(
e_{\Sc}^1\mathbb Z_p\varrho_{K/k,S}
:
e_{\Sc}^1\mathbb Z_p\varrho_{K/k,\Sc}
\right)_{\mathbb Z_p}.
\end{align*}
By Corollary \ref{Cor:correction-factor-bound},
\[
v_p\left(\big| H^1(G,\mathbb Z/p^n\mathbb Z(-1)) \big|\right)
\leq
\min(n,v_p(|G|))d_p(G).
\]
Since every place in
\(\Sc\setminus S\) does not decompose in \(K/k\), we have on the one hand,
by Lemma \ref{Lemma_4_8}
    
   \[
v_p\Big(\left(
e_{\Sc}^1\mathbb Z_p\varrho_{K/k,S}
:
e_{\Sc}^1\mathbb Z_p\varrho_{K/k,\Sc}
\right)_{\mathbb Z_p}\Big)
=
\sum_{v\in\Sc\setminus S}v_p([G:I_v])\leq v_p(|G|)|\Sc\setminus S|
.
\]
On the other hand, Lemma \ref{Lem:unit-index-bound} shows that
\[
v_p\left(
\left[
e_{\Sc}^1\mathbb Z_p(O_{K,\Sc}^{\times})_{\mathrm{tf}}
:
e_{\Sc}^1\mathbb Z_p(O_{K,S}^{\times})_{\mathrm{tf}}
\right]
\right)
\leq
v_p(|G|)|\Sc\setminus S|.
\]
Therefore
\[
v_p\left(
\big| H^1(G,\mathbb Z/p^n\mathbb Z(-1)) \big|
\left[
e_{\Sc}^1\mathbb Z_p(O_{K,\Sc}^{\times})_{\mathrm{tf}}
:
e_{\Sc}^1\mathbb Z_p(O_{K,S}^{\times})_{\mathrm{tf}}
\right]
\left(
e_{\Sc}^1\mathbb Z_p\varrho_{K/k,S}
:
e_{\Sc}^1\mathbb Z_p\varrho_{K/k,\Sc}
\right)_{\mathbb Z_p}\right)
\]
\[
\leq
\min(n,v_p(|G|))d_p(G)
+
2v_p(|G|)|\Sc\setminus S|.
\]

Now assume that the rank-one Rubin--Stark conjecture holds for \(K/k\) and
\(\Sc\). By Corollary \ref{Cor:eS-index-well-defined}, we then have
\[
\left(
e_{\Sc}^1\mathbb Z_p(O_{K,\Sc}^{\times})_{\mathrm{tf}}
:
e_{\Sc}^1\mathbb Z_p\varrho_{K/k,\Sc}
\right)_{\mathbb Z_p}
\subseteq
\mathbb Z_p.
\]
Hence
\[
h_{k,S}(p)
\mid
h_{K,S}(p)\cdot
p^{
\min(n,v_p(|G|))d_p(G)
+
2v_p(|G|)|\Sc\setminus S|
}.
\]
This proves the contrapositive of \((1)\).
For \((2)\), take
\[
\Sc=S\cup \mathrm{Ram}(K/k).
\]
Since $\Sc\supseteq\mathrm{Ram}(K/k)$ (and $\Sc\supseteq\mathrm{Ram}(K/k)\cup S_\infty$ in the number field case), the rank-one Rubin--Stark conjecture for $K/k$ and $\Sc$ holds unconditionally for finite abelian extensions of function fields and for finite abelian extensions of number fields with $k=\mathbb{Q}$ or $k$ an imaginary quadratic number  field (see Remark \ref{REMA_3_2}). But this matches exactly our case since we suppose that $|S|=1$ and that $S\supseteq S_\infty$ in the number field case.
we obtain
\[
h_{k,S}(p)
\mid
h_{K,S}(p)\cdot
p^{
\min(n,v_p(|G|))d_p(G)
+
2v_p(|G|)|\mathrm{Ram}(K/k)|
}.
\]
This proves \((2)\).\qed
\end{proof}
\begin{corollary}\label{Case-homology-vanishes}
  Assume that either
\[p\nmid |G| \qquad \text{or}\qquad p\nmid w_k.
\]
Then the following assertions hold.
\begin{enumerate}
\item If the places in \(\Sc\setminus S\) do not decompose in
\(K/k\) and
\[
h_{k,S}(p)
\nmid
h_{K,S}(p)\cdot
p^{2 v_p(|G|)|\Sc\setminus S|
},
\]
then the rank-one Rubin--Stark conjecture does not hold for \(K/k\) and
\(\Sc\).

\item Assume that the ramified places in \(K/k\) do not decompose in
\(K/k\).
Then
\[
h_{k,S}(p)
\mid
h_{K,S}(p)\cdot
p^{
2v_p(|G|)|\mathrm{Ram}(K/k)|
}.
\]
\end{enumerate}
\end{corollary}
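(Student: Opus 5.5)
This corollary is a direct specialization of Theorem~\ref{Thm:RS-obstruction-divisibility}: under either hypothesis the homological correction term disappears. Rather than re-running the whole argument, we go back to the proof of Theorem~\ref{Thm:RS-obstruction-divisibility} and sharpen the one bound that used Corollary~\ref{Cor:correction-factor-bound}.

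First, assume $p\nmid|G|$ or $p\nmid w_k$. The condition $p\nmid w_k$ is equivalent to $\mu(k)(p)=0$. By Corollary~\ref{Cor:correction-factor-trivial} combined with Proposition~\ref{Prop:correction-factor-homology}, we get $H_1(G,\mu(K)(p))=0$, and hence
\[
\big|H^1(G,\mathbb Z/p^n\mathbb Z(-1))\big|=1 .
\]
(If $p\nmid|G|$, one could also note that $\min(n,v_p(|G|))=0$ already in Theorem~\ref{Thm:RS-obstruction-divisibility}. The route through Corollary~\ref{Cor:correction-factor-trivial} covers both hypotheses uniformly.)

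Second, substitute this into the identity of Theorem~\ref{Thm:Ben-1-5-general-form}. Its hypotheses are in force: $|\Sc|\neq 2$, and the places of $\Sc\setminus S$ do not decompose. The identity now reads
\[
\Big(e_{\Sc}^1\mathbb Z_p(O_{K,\Sc}^{\times})_{\mathrm{tf}}:e_{\Sc}^1\mathbb Z_p\varrho_{K/k,\Sc}\Big)_{\mathbb Z_p}
=\frac{h_{K,S}}{h_{k,S}}\Big[e_{\Sc}^1\mathbb Z_p(O_{K,\Sc}^{\times})_{\mathrm{tf}}:e_{\Sc}^1\mathbb Z_p(O_{K,S}^{\times})_{\mathrm{tf}}\Big]\Big(e_{\Sc}^1\mathbb Z_p\varrho_{K/k,S}:e_{\Sc}^1\mathbb Z_p\varrho_{K/k,\Sc}\Big)_{\mathbb Z_p}.
\]
The last two factors are bounded exactly as before:
\begin{itemize}
\item Lemma~\ref{Lemma_4_8} gives the $\varrho$-factor valuation $\sum_{v\in\Sc\setminus S}v_p([G:I_v])\le v_p(|G|)|\Sc\setminus S|$;
\item Lemma~\ref{Lem:unit-index-bound} gives the same bound for the unit index.
\end{itemize}
Together these contribute at most $2v_p(|G|)|\Sc\setminus S|$ to the $p$-adic valuation.

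Third, suppose the rank-one Rubin--Stark conjecture holds for $K/k$ and $\Sc$. Then Corollary~\ref{Cor:eS-index-well-defined} shows the left-hand side lies in $\mathbb Z_p$. Taking valuations gives
\[
v_p(h_{k,S})\le v_p(h_{K,S})+2v_p(|G|)|\Sc\setminus S| ,
\]
which is exactly the negation of the non-divisibility in (1); this proves (1) by contraposition.

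For (2), take $\Sc=S\cup\mathrm{Ram}(K/k)$. The standing assumption $|\mathrm{Ram}(K/k)|\neq1$ ensures $|\Sc|\neq2$. Since $S=\{\mathfrak v\}$ with $\mathfrak v$ totally split, $\mathfrak v$ is unramified, so $\Sc\setminus S=\mathrm{Ram}(K/k)$ and these places do not decompose by hypothesis. The rank-one Rubin--Stark conjecture for $\Sc\supseteq\mathrm{Ram}(K/k)$ is available in the same settings invoked in the proof of Theorem~\ref{Thm:RS-obstruction-divisibility}(2), via Remark~\ref{REMA_3_2}. The divisibility then follows from the inequality above.

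There is no real obstacle here. The only point needing care is checking that both alternative hypotheses genuinely kill $H_1(G,\mu(K)(p))$, and this is precisely the content of Corollary~\ref{Cor:correction-factor-trivial}.
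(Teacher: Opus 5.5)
Your proposal is correct and follows the paper's own argument. The paper likewise reruns the proof of Theorem~\ref{Thm:RS-obstruction-divisibility} with $\big|H^1(G,\mathbb Z/p^n\mathbb Z(-1))\big|=1$ supplied by Corollary~\ref{Cor:correction-factor-trivial}, leaving only the contribution $2v_p(|G|)\,|\Sigma\setminus S|$ from Lemmas~\ref{Lemma_4_8} and~\ref{Lem:unit-index-bound}, and your explicit check that $\Sigma\setminus S=\mathrm{Ram}(K/k)$ in part (2) (the totally split place being unramified) is a detail the paper leaves implicit.
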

\begin{proof}
    This follows from the proof of Theorem \ref{Thm:RS-obstruction-divisibility} and the fact that $H^1(G, \mathbb{Z}/p^n\mathbb{Z}(-1))=0$ in this case by Corollary \ref{Cor:correction-factor-trivial}. \qed
\end{proof}
\begin{rem}
    Corollary \ref{Case-homology-vanishes} recovers and expands \cite[Theorem 3.7]{El26}.
\end{rem}
\subsection{The case $|S|=2$.} \label{subsection:S=2}
In this subsection, we study the case where our depletion set $S$ satisfies $|S|=2$. In particular, we suppose that $S$ contains a place $\mathfrak{v}$ which is totally split in $K/k$ and an additional place which does not decompose in $K/k$. In this context, as we have seen in \eqref{r-S-chi} and \eqref{e-S-1}, we have
\[
r_S(\chi)=1\text{ for all }\chi\in\hat{G},\qquad \text{and}\qquad e_S^1=1.
\]
Then, the generalized Stickelberger module $\varrho_{K/k,S}$, which reflects the arithmetic behavior of the first derivatives of the associated $L$-functions, simplifies as follows
\begin{proposition}\label{Index-simplifies-for-cardinal2}
    Suppose that $S$ is as above. Then
    \[
\left(
(O_{K,S}^{\times})_{\mathrm{tf}}:
\varrho_{K/k,S}
\right)_{\mathbb{Z}}
=
{h_{K,S}}.
\]
\end{proposition}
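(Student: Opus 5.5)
This is a direct combination of Proposition~\ref{Prop:generalized-index-formula-rank-one} (case $|S|=2$) with Proposition~\ref{Prop:XKS-annihilator-index-two-cases}(2). We set $m:=|S\setminus S'|=1$ throughout.

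First, we invoke the second special case of Proposition~\ref{Prop:generalized-index-formula-rank-one}. It applies because $S=\{\mathfrak v,v_1\}$, with $\mathfrak v$ totally split and $v_1$ not decomposing, so $|S\setminus S'|=1$ and the factor $\bigl(h_{k,S}R_{k,S}/w_k\bigr)^{|S\setminus S'|-1}$ equals $1$. This gives
\[
\left((O_{K,S}^{\times})_{\mathrm{tf}}:\varrho_{K/k,S}\right)_{\mathbb Z}
=\frac{h_{K,S}}{w_K}\left[X_{K,S}:\mathrm{Ann}_{\mathbb Z[G]}(\mu(K))X_{K,S}\right]\mathbb Z .
\]

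Second, we apply Proposition~\ref{Prop:XKS-annihilator-index-two-cases}(2). Since $|S|\neq1$, it yields
\[
\left[X_{K,S}:\mathrm{Ann}_{\mathbb Z[G]}(\mu(K))X_{K,S}\right]=w_K\,w_k^{m-1}=w_K .
\]
Substituting, the factor $w_K$ cancels, and the generalized index is $h_{K,S}\mathbb Z$, as claimed (the statement identifies the ideal with its positive generator).

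The only point that needs checking is that the hypotheses of both propositions are exactly the standing assumptions of \S\ref{section:rank-one} specialised to $m=1$. In particular, the claim $AX_{K,S}\cap\Delta G=A\cap\Delta G$ used $m\ge1$, which holds here. We also want the convention on $R_{k,S}$ to be harmless; since its exponent is $0$, it drops out regardless of its value. I expect no genuine obstacle, as there is no new computation beyond this bookkeeping.
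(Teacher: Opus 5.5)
Your proposal is correct and is exactly the paper's argument. The paper proves this by combining the $|S|=2$ case of Proposition~\ref{Prop:generalized-index-formula-rank-one} with Proposition~\ref{Prop:XKS-annihilator-index-two-cases}(2), so that $[X_{K,S}:\mathrm{Ann}_{\mathbb Z[G]}(\mu(K))X_{K,S}]=w_K$ cancels the $w_K$ in the denominator and leaves $h_{K,S}\mathbb Z$.
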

\begin{proof}
    Combine Proposition \ref{Prop:generalized-index-formula-rank-one} and Proposition \ref{Prop:XKS-annihilator-index-two-cases}($2$).\qed
\end{proof}
Now let \(\Sc\supseteq S\). We shall again cut out the rank-one
component by the idempotent \(e_{\Sc}^1\). Before doing this, we need a
simple description of the trivial component of \(X_{K,\Sc}\).

Let
\[
\mathrm{res}:Y_{K,\Sc}\longrightarrow Y_{k,\Sc}
\]
be the restriction map induced by sending a place \(w\) of \(K\) to the place of \(k\) below
it. We use the same notation for its \(\mathbb Q\)-linear extension.

\begin{lemma}
\label{Lem:trivial-component-XKS}
Put
\[
S':=\{\mathfrak v\},
\qquad
m:=|\Sc\setminus S'|.
\]
Assume that every place in \(\Sc\setminus S'\) does not decompose in
\(K/k\). Then \(\mathrm{res}\) induces an isomorphism
\[
\mathrm{res}:e_{\chi_0}X_{K,\Sc}
\xrightarrow{\;\sim\;}
X_{k,\Sc}.
\]
\end{lemma}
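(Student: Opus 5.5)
The plan is to make everything explicit using the same coordinates as in the proof of Proposition \ref{Prop:XKS-annihilator-index-two-cases}. Choose a place \(w_0\) of \(K\) above \(\mathfrak v\). Write \(\Sc\setminus S'=\{v_1,\dots,v_m\}\), and let \(w_i\) be the unique place of \(K\) above \(v_i\). Since \(\mathfrak v\) splits completely and each \(v_i\) does not decompose, we have
\[
Y_{K,\Sc}\simeq \mathbb Z[G]w_0\oplus\bigoplus_{i=1}^m\mathbb Z w_i,
\]
with \(G\) acting trivially on each \(w_i\). In these coordinates
\[
X_{K,\Sc}=\{(a,n_1,\dots,n_m)\mid \mathrm{Aug}_G(a)+n_1+\cdots+n_m=0\}.
\]
On the base field, \(Y_{k,\Sc}\) is free on \(\mathfrak v,v_1,\dots,v_m\), and \(X_{k,\Sc}\) is its degree-zero part. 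The map \(\mathrm{res}\) sends \(\sigma w_0\mapsto\mathfrak v\) and \(w_i\mapsto v_i\).

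\textbf{Step 1: \(\mathrm{res}\) factors through \(e_{\chi_0}\).} Since \(\mathrm{res}(\sigma w)=\mathrm{res}(w)\) for all \(\sigma\in G\), the \(\mathbb Q\)-linear extension satisfies \(\mathrm{res}(e_{\chi_0}x)=\mathrm{res}(x)\). Applying this to \(x=(a,n_1,\dots,n_m)\) gives
\[
\mathrm{res}(x)=\mathrm{Aug}_G(a)\,\mathfrak v+\sum_i n_i v_i .
\]
This element has degree \(0\), so \(\mathrm{res}\) maps \(e_{\chi_0}X_{K,\Sc}\) into \(X_{k,\Sc}\).

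\textbf{Step 2: injectivity.} Using \(e_{\chi_0}=\frac1{|G|}\sum_{\sigma}\sigma\), one computes
\[
e_{\chi_0}(a,n_1,\dots,n_m)=\frac{\mathrm{Aug}_G(a)}{|G|}N_Gw_0+\sum_i n_iw_i,
\qquad N_G:=\sum_{\sigma\in G}\sigma .
\]
Hence \(e_{\chi_0}\mathbb QY_{K,\Sc}\) is the \(\mathbb Q\)-span of \(N_Gw_0,w_1,\dots,w_m\). Their images \(|G|\mathfrak v,v_1,\dots,v_m\) are linearly independent, so \(\mathrm{res}\) is injective on \(e_{\chi_0}\mathbb QY_{K,\Sc}\), and in particular on \(e_{\chi_0}X_{K,\Sc}\).

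\textbf{Step 3: surjectivity.} Given \(c\,\mathfrak v+\sum_i n_iv_i\in X_{k,\Sc}\) with \(c+\sum_i n_i=0\), take \(x=(c\cdot 1,n_1,\dots,n_m)\in X_{K,\Sc}\). By Step 1, \(\mathrm{res}(e_{\chi_0}x)=\mathrm{res}(x)=c\,\mathfrak v+\sum_i n_iv_i\).

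\textbf{Main point of care.} I expect no real obstacle. The one subtlety is that \(e_{\chi_0}X_{K,\Sc}\) sits in \(\mathbb QX_{K,\Sc}\) and not inside \(X_{K,\Sc}\), because the coefficient \(\mathrm{Aug}_G(a)/|G|\) need not be integral. For this reason I will work throughout with the \(\mathbb Q\)-linear extension of \(\mathrm{res}\). The integrality of the image is then exactly what the identity \(\mathrm{res}\circ e_{\chi_0}=\mathrm{res}\) from Step 1 guarantees.
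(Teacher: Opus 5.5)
Your proposal is correct. It follows essentially the paper's route: the same coordinates $Y_{K,\Sc}\simeq\mathbb Z[G]w_0\oplus\bigoplus_i\mathbb Z w_i$ and the same formula $e_{\chi_0}(a,n_1,\dots,n_m)=\frac{\mathrm{Aug}_G(a)}{|G|}N_Gw_0+\sum_i n_iw_i$. The paper writes out the lattice $e_{\chi_0}X_{K,\Sc}$ explicitly and reads off the bijection from $\mathrm{res}\bigl(\frac{1}{|G|}N_Gw_0\bigr)=\mathfrak v$ and $\mathrm{res}(w_i)=v_i$, whereas you split this into image, injectivity and surjectivity via $\mathrm{res}\circ e_{\chi_0}=\mathrm{res}$; the difference is only cosmetic.
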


\begin{proof}
Choose a place \(w_0\) of \(K\) above \(\mathfrak v\), and write
\[
\Sc\setminus S'=\{v_1,\dots,v_m\}.
\]
Since each \(v_i\) does not decompose in \(K/k\), there is a unique place
\(w_i\) of \(K\) above \(v_i\). Hence
\[
Y_{K,\Sc}
\simeq
\mathbb Z[G]w_0\oplus \bigoplus_{i=1}^m\mathbb Z w_i.
\]
Under this identification,
\[
X_{K,\Sc}
=
\left\{
(a,n_1,\dots,n_m)\in \mathbb Z[G]\oplus\mathbb Z^m
\; ; \;
\mathrm{Aug}_G(a)+n_1+\cdots+n_m=0
\right\}.
\]
Moreover,
\[
X_{k,\Sc}
=
\left\{
b_0\mathfrak v+b_1v_1+\cdots+b_mv_m
\; ; \;
b_0+b_1+\cdots+b_m=0
\right\}.
\]

The idempotent \(e_{\chi_0}\) acts on \(\mathbb Q[G]w_0\) by
\[
e_{\chi_0}aw_0=\frac{\mathrm{Aug}_G(a)}{|G|}N_Gw_0,
\]
and acts trivially on the \(G\)-fixed summands \(\mathbb Z w_i\). Therefore
\[
e_{\chi_0}X_{K,\Sc}
=
\left\{
\frac{b_0}{|G|}N_Gw_0+b_1w_1+\cdots+b_mw_m
\; ; \;
b_i\in\mathbb Z,\;
b_0+b_1+\cdots+b_m=0
\right\}.
\]
Since
\[
\mathrm{res}\left(\frac{1}{|G|}N_Gw_0\right)=\mathfrak v
\qquad
\text{and}
\qquad
\mathrm{res}(w_i)=v_i,
\]
the restriction map sends this lattice bijectively onto \(X_{k,\Sc}\).
This proves the claim.\qed
\end{proof}

\begin{proposition}
\label{Prop:trivial-component-index}
Put
\[
S':=\{\mathfrak v\},
\qquad
m:=|\Sc\setminus S'|.
\]
Assume that every place in \(\Sc\setminus S'\) does not decompose in
\(K/k\), and suppose that \(m\geq 1\). Then
\[
\left(
e_{\chi_0}(O_{K,\Sc}^{\times})_{\mathrm{tf}}
:
e_{\chi_0}\varrho_{K/k,\Sc}
\right)_{\mathbb Z}
=
\frac{
h_{k,\Sc}^{m}R_{k,\Sc}^{m-1}
}{
\left[
(O_{k,\Sc}^{\times})_{\mathrm{tf}}
:
N_{K/k}(O_{K,\Sc}^{\times})_{\mathrm{tf}}
\right]
}
\mathbb Z.
\]
\end{proposition}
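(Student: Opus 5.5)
We compute the index on the trivial isotypic component by transporting everything down to $k$ via the norm/restriction maps and using Lemma \ref{Lem:trivial-component-XKS}. On the $\chi_0$-component the leading term $\Theta^*_{K/k,\Sc}(0)$ acts as the scalar $L^*_{\Sc}(0,\chi_0)=\zeta^*_{k,\Sc}(0)=\pm h_{k,\Sc}R_{k,\Sc}/w_k$, since $r_{\Sc}(\chi_0)=m$.

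The second ingredient is the augmentation of the annihilator. The element $e_{\chi_0}$ acts on $\mathrm{Ann}_{\mathbb Z[G]}(\mu(K))$ through $\mathrm{Aug}_G$, which by \eqref{roots-of-units-augmentation} has image $w_k\mathbb Z$. Hence
\[
e_{\chi_0}\vartheta_{K/k,\Sc}(0)X_{K,\Sc}=w_k\,\zeta^*_{k,\Sc}(0)\,e_{\chi_0}X_{K,\Sc}=\pm h_{k,\Sc}R_{k,\Sc}\,e_{\chi_0}X_{K,\Sc}.
\]

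Next we compare the regulators. For $u\in O_{K,\Sc}^\times$ one has $e_{\chi_0}u=\frac1{|G|}N_Gu$. Moreover $\mathrm{res}\circ\lambda_{K,\Sc}$ applied to $N_G u$ equals $\lambda_{k,\Sc}(N_{K/k}u)$. Indeed, at the non-decomposed places $w_i$ we have $|x|_{w_i}=|N_{K/k}x|_{v_i}$, and at $\mathfrak v$, which splits completely, the sum over $w\mid\mathfrak v$ of $\log|u|_w$ is $\log|N_{K/k}u|_{\mathfrak v}$. One must check the normalisation against the factor $1/|G|$ in Lemma \ref{Lem:trivial-component-XKS}; $\mathrm{res}$ kills nothing on $e_{\chi_0}$, so the bookkeeping is: $\mathrm{res}\,\lambda_{K,\Sc}(e_{\chi_0}u)=\frac{1}{|G|}\lambda_{k,\Sc}(N_{K/k}u)$ up to the identification of $\frac1{|G|}N_Gw_0$ with $\mathfrak v$.

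Through the isomorphism $\mathrm{res}\circ\widetilde\lambda_{K,\Sc}$ of $e_{\chi_0}\mathbb Q O_{K,\Sc}^\times$ with $\mathbb Q X_{k,\Sc}$, the two lattices therefore become:
\begin{itemize}
\item $e_{\chi_0}(O_{K,\Sc}^\times)_{\mathrm{tf}}$, a scalar multiple of $\lambda_{k,\Sc}(N_{K/k}(O_{K,\Sc}^\times)_{\mathrm{tf}})$;
\item $e_{\chi_0}\varrho_{K/k,\Sc}$, which maps to $h_{k,\Sc}R_{k,\Sc}X_{k,\Sc}$.
\end{itemize}

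The index is then computed by multiplicativity of generalized indices. The space $X_{k,\Sc}$ has rank $m$. The index from $\lambda_{k,\Sc}((O_{k,\Sc}^\times)_{\mathrm{tf}})$ to $X_{k,\Sc}$ is $R_{k,\Sc}^{-1}$. The index from the norm lattice to the full unit lattice contributes $[(O_{k,\Sc}^\times)_{\mathrm{tf}}:N_{K/k}(O_{K,\Sc}^\times)_{\mathrm{tf}}]^{-1}$. Scaling $X_{k,\Sc}$ by $h_{k,\Sc}R_{k,\Sc}$ contributes $(h_{k,\Sc}R_{k,\Sc})^m$. The product is
\[
\frac{(h_{k,\Sc}R_{k,\Sc})^m}{R_{k,\Sc}\,[(O_{k,\Sc}^\times)_{\mathrm{tf}}:N_{K/k}(O_{K,\Sc}^\times)_{\mathrm{tf}}]},
\]
which is the claim. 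The scalar factors $1/|G|$ must cancel: both lattices are transported by the same linear map, so any uniform scalar drops out of the generalized index.

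\textbf{Main obstacle.} The hard step is the regulator comparison, namely verifying precisely that
\[
\mathrm{res}\,\lambda_{K,\Sc}\bigl(e_{\chi_0}(O_{K,\Sc}^\times)_{\mathrm{tf}}\bigr)=c\,\lambda_{k,\Sc}\bigl(N_{K/k}(O_{K,\Sc}^\times)_{\mathrm{tf}}\bigr)
\]
with the same constant $c$ as on the $\varrho$ side. This requires care with the absolute-value normalisations at places with nontrivial local degree, including inertia at ramified places in $\Sc$. I would first check it on the generator $\frac1{|G|}N_Gw_0$ and on the $w_i$ using $|x|_{w_i}=|N_{K/k}x|_{v_i}$. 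I would also verify that the torsion-free quotient of the norm image sits inside $(O_{k,\Sc}^\times)_{\mathrm{tf}}$ with finite index, which holds because $N_{K/k}(u)=u^{|G|}$ for $u\in O_{k,\Sc}^\times$.
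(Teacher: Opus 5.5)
Your route is the paper's: transport both lattices by $\mathrm{res}\circ\widetilde\lambda_{K,\Sc}$ onto $\mathbb{C}X_{k,\Sc}$ via Lemma~\ref{Lem:trivial-component-XKS}. Then identify the image of $e_{\chi_0}\varrho_{K/k,\Sc}$ as $w_k\zeta^*_{k,\Sc}(0)X_{k,\Sc}=\pm h_{k,\Sc}R_{k,\Sc}X_{k,\Sc}$ using $\mathrm{Aug}_G(\mathrm{Ann}_{\mathbb Z[G]}(\mu(K)))=w_k\mathbb Z$, and conclude by multiplicativity. That part, and your final index computation, are fine.

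The unit side, however, is wrong as written. Your claim $\mathrm{res}\,\lambda_{K,\Sc}(N_Gu)=\lambda_{k,\Sc}(N_{K/k}u)$ is off by a factor $|G|$. Since $\mathrm{res}(\sigma y)=\mathrm{res}(y)$ for $\sigma\in G$, one has $\mathrm{res}\,\lambda_{K,\Sc}(N_Gu)=|G|\,\mathrm{res}\,\lambda_{K,\Sc}(u)=|G|\,\lambda_{k,\Sc}(N_{K/k}u)$. Your own ``indeed'' computation proves the identity for $u$, not for $N_Gu$, and this slip produces your constant $c=1/|G|$.

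Your reason for discarding it is also not valid. A common isomorphism preserving generalized indices is what justifies passing to $\mathrm{res}\circ\widetilde\lambda_{K,\Sc}$ in the first place. It says nothing about a scalar that appears in front of only one of the two image lattices. The $\varrho$-side image $h_{k,\Sc}R_{k,\Sc}X_{k,\Sc}$ carries no such scalar, so a $c$ on the unit side alone changes the index by $c^{-m}$. With $c=1/|G|$ you would get the stated formula multiplied by $|G|^{m}$. So the one normalization on which the result hinges is left open, with the wrong tentative value.

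The repair is one line and is exactly the step the paper takes. Because $\mathrm{res}$ is $G$-invariant, $\mathrm{res}\circ e_{\chi_0}=\mathrm{res}$, hence
\[
\mathrm{res}\,\widetilde\lambda_{K,\Sc}(e_{\chi_0}u)=\mathrm{res}\,\lambda_{K,\Sc}(u)=-\sum_{v\in\Sc}\log\Bigl(\prod_{w\mid v}|u|_w\Bigr)v=\lambda_{k,\Sc}(N_{K/k}u).
\]
So $c=1$, and $\mathrm{res}\,\widetilde\lambda_{K,\Sc}\bigl(e_{\chi_0}(O_{K,\Sc}^{\times})_{\mathrm{tf}}\bigr)=\lambda_{k,\Sc}\bigl(N_{K/k}(O_{K,\Sc}^{\times})_{\mathrm{tf}}\bigr)$ on the nose. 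No special care at ramified places is needed: with product-formula normalizations, $\prod_{w\mid v}|x|_w=|N_{K/k}x|_v$ holds at every place $v$ of $k$, whatever the splitting or ramification. With this correction, the rest of your argument yields the proposition.
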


\begin{proof}
By Lemma \ref{Lem:trivial-component-XKS}, the map \(\mathrm{res}\) induces an
isomorphism
\[
\mathrm{res}:e_{\chi_0}X_{K,\Sc}\xrightarrow{\;\sim\;}X_{k,\Sc}.
\]
Hence, by functoriality of generalized indices applied to the isomorphism
\[
\mathrm{res}\circ \widetilde{\lambda}_{K,\Sc}:
e_{\chi_0}\mathbb C(O_{K,\Sc}^{\times})_{\mathrm{tf}}
\xrightarrow{\;\sim\;}
\mathbb C X_{k,\Sc},
\]
we get
\[
\left(
e_{\chi_0}(O_{K,\Sc}^{\times})_{\mathrm{tf}}
:
e_{\chi_0}\varrho_{K/k,\Sc}
\right)_{\mathbb Z}
=
\left(
\mathrm{res}\left(\widetilde{\lambda}_{K,\Sc}
\left(e_{\chi_0}(O_{K,\Sc}^{\times})_{\mathrm{tf}}\right)\right)
:
\mathrm{res}\left(\widetilde{\lambda}_{K,\Sc}
\left(e_{\chi_0}\varrho_{K/k,\Sc}\right)\right)
\right)_{\mathbb Z}.
\]

By the compatibility of the logarithmic regulator with restriction and norm
(see e.g., \cite[Lemma~5.4]{SaadMazigh}), one has
\[
\mathrm{res}\circ \widetilde{\lambda}_{K,\Sc}
=
\widetilde\lambda_{k,\Sc}\circ N_{K/k}
\]
on \((O_{K,\Sc}^{\times})_{\mathrm{tf}}\). Therefore
\[
\mathrm{res}\left(\widetilde{\lambda}_{K,\Sc}
\left(e_{\chi_0}(O_{K,\Sc}^{\times})_{\mathrm{tf}}\right)\right)
=
\widetilde\lambda_{k,\Sc}\left(N_{K/k}(O_{K,\Sc}^{\times})_{\mathrm{tf}}\right).
\]

It remains to compute the image of \(e_{\chi_0}\varrho_{K/k,\Sc}\). 
By definition,
\[
\widetilde{\lambda}_{K,\Sc}(\varrho_{K/k,\Sc})
=
\mathrm{Ann}_{\mathbb Z[G]}(\mu(K))\Theta_{K/k,\Sc}^{*}(0)X_{K,\Sc}.
\]
Multiplying by \(e_{\chi_0}\), and using the fact that the trivial component of
\(\Theta_{K/k,\Sc}^{*}(0)\) is \(\zeta_{k,\Sc}^{*}(0)e_{\chi_0}\), gives
\[
\widetilde{\lambda}_{K,\Sc}
\left(e_{\chi_0}\varrho_{K/k,\Sc}\right)
=
e_{\chi_0}\mathrm{Ann}_{\mathbb Z[G]}(\mu(K))\zeta_{k,\Sc}^{*}(0)X_{K,\Sc}.
\]
Applying \(\mathrm{res}\), we obtain
\[
\mathrm{res}\left(
\widetilde{\lambda}_{K,\Sc}
\left(e_{\chi_0}\varrho_{K/k,\Sc}\right)
\right)
=
\mathrm{Aug}_G(\mathrm{Ann}_{\mathbb Z[G]}(\mu(K)))\zeta_{k,\Sc}^{*}(0)X_{k,\Sc}.
\]
Moreover by (\ref{roots-of-units-augmentation}), we have
\[
\mathrm{Aug}_G(\mathrm{Ann}_{\mathbb Z[G]}(\mu(K)))
=
\mathrm{Ann}_{\mathbb Z}(\mu(k))
=
w_k\mathbb Z.
\]
Thus
\[
\mathrm{res}\left(
\widetilde{\lambda}_{K,\Sc}
\left(e_{\chi_0}\varrho_{K/k,\Sc}\right)
\right)
=
w_k\zeta_{k,\Sc}^{*}(0)X_{k,\Sc}.
\]
By the \(\Sc\)-class number formula,
\[
w_k\zeta_{k,\Sc}^{*}(0)
=
\pm h_{k,\Sc}R_{k,\Sc}.
\]
The sign does not affect generalized indices, so
\[
\mathrm{res}\left(
\widetilde{\lambda}_{K,\Sc}
\left(e_{\chi_0}\varrho_{K/k,\Sc}\right)
\right)
=
h_{k,\Sc}R_{k,\Sc}X_{k,\Sc}
\]
up to sign.
Consequently,
\[
\left(
e_{\chi_0}(O_{K,\Sc}^{\times})_{\mathrm{tf}}
:
e_{\chi_0}\varrho_{K/k,\Sc}
\right)_{\mathbb Z}
=
\left(
\widetilde\lambda_{k,\Sc}\left(N_{K/k}(O_{K,\Sc}^{\times})_{\mathrm{tf}}\right)
:
h_{k,\Sc}R_{k,\Sc}X_{k,\Sc}
\right)_{\mathbb Z}.
\]
By multiplicativity of generalized indices,
\[
\left(
\widetilde\lambda_{k,\Sc}\left(N_{K/k}(O_{K,\Sc}^{\times})_{\mathrm{tf}}\right)
:
h_{k,\Sc}R_{k,\Sc}X_{k,\Sc}
\right)_{\mathbb Z}
=
\left(
\widetilde\lambda_{k,\Sc}\left(N_{K/k}(O_{K,\Sc}^{\times})_{\mathrm{tf}}\right)
:
\widetilde\lambda_{k,\Sc}\left((O_{k,\Sc}^{\times})_{\mathrm{tf}}\right)
\right)_{\mathbb Z}
\]
\[
\cdot
\left(
\widetilde\lambda_{k,\Sc}\left((O_{k,\Sc}^{\times})_{\mathrm{tf}}\right)
:
X_{k,\Sc}
\right)_{\mathbb Z}
\cdot
\left(
X_{k,\Sc}
:
h_{k,\Sc}R_{k,\Sc}X_{k,\Sc}
\right)_{\mathbb Z}.
\]
The first factor is
\[
\left[
(O_{k,\Sc}^{\times})_{\mathrm{tf}}
:
N_{K/k}(O_{K,\Sc}^{\times})_{\mathrm{tf}}
\right]^{-1}\mathbb Z.
\]
By definition of the \(\Sc\)-regulator,
\[
\left(
X_{k,\Sc}:
\widetilde\lambda_{k,\Sc}\left((O_{k,\Sc}^{\times})_{\mathrm{tf}}\right)
\right)_{\mathbb Z}
=
R_{k,\Sc}\mathbb Z,
\]
and hence
\[
\left(
\widetilde\lambda_{k,\Sc}\left((O_{k,\Sc}^{\times})_{\mathrm{tf}}\right)
:
X_{k,\Sc}
\right)_{\mathbb Z}
=
R_{k,\Sc}^{-1}\mathbb Z.
\]
Finally, since
\[
\operatorname{rank}_{\mathbb Z}X_{k,\Sc}=m,
\]
one has
\[
\left(
X_{k,\Sc}:h_{k,\Sc}R_{k,\Sc}X_{k,\Sc}
\right)_{\mathbb Z}
=
(h_{k,\Sc}R_{k,\Sc})^m\mathbb Z.
\]
Combining these three factors gives
\[
\left(
e_{\chi_0}(O_{K,\Sc}^{\times})_{\mathrm{tf}}
:
e_{\chi_0}\varrho_{K/k,\Sc}
\right)_{\mathbb Z}
=
\frac{
h_{k,\Sc}^{m}R_{k,\Sc}^{m-1}
}{
\left[
(O_{k,\Sc}^{\times})_{\mathrm{tf}}
:
N_{K/k}(O_{K,\Sc}^{\times})_{\mathrm{tf}}
\right]
}
\mathbb Z.
\]\qed
\end{proof}

\begin{theorem}
\label{ClassNumberRelation-Cardinal2}
Assume that \(\Sc\supsetneq S\) and that every place in
\(\Sc\setminus S\) does not decompose in \(K/k\). Then
\[
p^{v_p(|G|)(|G|-1+2|\Sc\setminus S|)}
\frac{h_{K,S}}{h_{k,S}}
\big|\widehat H^0(G, (O_{K,S}^{\times})_{\mathrm{tf}})\big| \, \mathbb Z_p
\subseteq
\left(
e_{\Sc}^1\mathbb Z_p(O_{K,\Sc}^{\times})_{\mathrm{tf}}
:
e_{\Sc}^1\mathbb Z_p\varrho_{K/k,\Sc}
\right)_{\mathbb Z_p},
\]
where $\widehat H^0$ denotes the zeroth Tate cohomology group.
\end{theorem}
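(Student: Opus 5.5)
The plan is to follow the proof of Theorem~\ref{Thm:Ben-1-5-general-form} and factor the index through $S$. Since $S=\{\mathfrak v,v_1\}$ with $v_1$ non-decomposed, and every place of $\Sc\setminus S$ is non-decomposed with $\Sc\supsetneq S$, we have $|\Sc|\geq 3$. Hence $e_{\Sc}^1=1-e_{\chi_0}$, and the hypotheses of Lemma~\ref{Lemma_4_8} and Lemma~\ref{Lem:unit-index-bound} hold. Writing $e:=e_{\Sc}^1$, multiplicativity of indices gives
\[
\left(e\mathbb Z_p(O_{K,\Sc}^{\times})_{\mathrm{tf}}:e\mathbb Z_p\varrho_{K/k,\Sc}\right)_{\mathbb Z_p}
=\left[e\mathbb Z_p(O_{K,\Sc}^{\times})_{\mathrm{tf}}:e\mathbb Z_p(O_{K,S}^{\times})_{\mathrm{tf}}\right]
\left(e\mathbb Z_p(O_{K,S}^{\times})_{\mathrm{tf}}:e\mathbb Z_p\varrho_{K/k,S}\right)_{\mathbb Z_p}
\left(e\mathbb Z_p\varrho_{K/k,S}:e\mathbb Z_p\varrho_{K/k,\Sc}\right)_{\mathbb Z_p}.
\]
By Lemma~\ref{Lem:unit-index-bound}, the first factor has $p$-adic valuation at most $v_p(|G|)|\Sc\setminus S|$. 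By Lemma~\ref{Lemma_4_8}, the third factor has valuation $\sum_{v\in\Sc\setminus S}v_p([G:I_v])\leq v_p(|G|)|\Sc\setminus S|$. Together they account for the term $2v_p(|G|)|\Sc\setminus S|$ in the exponent.

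The middle factor will be handled by Lemma~\ref{Lem:idempotent-index-comparison}, applied with $M=\mathbb Z_p(O_{K,S}^{\times})_{\mathrm{tf}}$, $N=\mathbb Z_p\varrho_{K/k,S}$, $e=1-e_{\chi_0}$ and $t=|G|$. By Proposition~\ref{Index-simplifies-for-cardinal2} we have $(M:N)_{\mathbb Z_p}=h_{K,S}\mathbb Z_p$. The complementary factor is $(e_{\chi_0}M:e_{\chi_0}N)_{\mathbb Z_p}$. We compute it with Proposition~\ref{Prop:trivial-component-index}, taking $\Sc=S$ there; this is allowed since $m=1$ and $R_{k,S}^{0}=1$. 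After tensoring with $\mathbb Z_p$ this gives
\[
\left(e_{\chi_0}M:e_{\chi_0}N\right)_{\mathbb Z_p}=\frac{h_{k,S}}{\left[(O_{k,S}^{\times})_{\mathrm{tf}}:N_{K/k}(O_{K,S}^{\times})_{\mathrm{tf}}\right]}\mathbb Z_p.
\]
Solving the identity of Lemma~\ref{Lem:idempotent-index-comparison} for the $e$-part, the middle factor equals
\[
p^{\ell(eM/(eM\cap M))-\ell(eN/(eN\cap N))}\,\frac{h_{K,S}}{h_{k,S}}\left[(O_{k,S}^{\times})_{\mathrm{tf}}:N_{K/k}(O_{K,S}^{\times})_{\mathrm{tf}}\right]\mathbb Z_p.
\]
Since $|G|e\in\mathbb Z_p[G]$ and $\dim_{\mathbb Q_p}(eV)=|G|-1$ (here $V=\mathbb Q_pX_{K,S}$ has dimension $|G|$ and its trivial part is one-dimensional), the second part of the lemma bounds the exponent by $v_p(|G|)(|G|-1)$, because the subtracted length is nonnegative.

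It then remains to compare the norm index with Tate cohomology. The inclusion $O_{k,S}^\times\subseteq O_{K,S}^\times$ induces an injection $(O_{k,S}^{\times})_{\mathrm{tf}}\hookrightarrow\big((O_{K,S}^{\times})_{\mathrm{tf}}\big)^G$, since torsion maps to torsion and a unit of $k$ with a power equal to a root of unity is a root of unity in $k$. The norm image $N_{K/k}(O_{K,S}^\times)_{\mathrm{tf}}$ is identified with $N_G\big((O_{K,S}^{\times})_{\mathrm{tf}}\big)$ inside this lattice. Then
\[
|\widehat H^0(G,(O_{K,S}^{\times})_{\mathrm{tf}})|=\left[((O_{K,S}^{\times})_{\mathrm{tf}})^G:(O_{k,S}^{\times})_{\mathrm{tf}}\right]\cdot\left[(O_{k,S}^{\times})_{\mathrm{tf}}:N_{K/k}(O_{K,S}^{\times})_{\mathrm{tf}}\right],
\]
so the norm index divides $|\widehat H^0|$. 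Combining all bounds, the $p$-adic valuation of the total index is at most the valuation of $p^{v_p(|G|)(|G|-1+2|\Sc\setminus S|)}\frac{h_{K,S}}{h_{k,S}}|\widehat H^0|$, which is the asserted inclusion of $\mathbb Z_p$-ideals.

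I expect the main obstacle to be this last identification: making precise that the norm on $O_{K,S}^\times$ descends compatibly to the torsion-free quotients and agrees with $N_G$ on $(O_{K,S}^{\times})_{\mathrm{tf}}$ under the embedding of $(O_{k,S}^{\times})_{\mathrm{tf}}$. A secondary check is that the sign of the length correction goes in the right direction, so that only an upper bound on the valuation is needed.
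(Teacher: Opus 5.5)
Your proposal is correct and follows the paper's proof essentially step for step: the same three-factor decomposition through $S$, Lemma~\ref{Lem:idempotent-index-comparison} combined with Proposition~\ref{Index-simplifies-for-cardinal2} and with Proposition~\ref{Prop:trivial-component-index} at $m=1$, the same $v_p(|G|)(|G|-1)$ length bound, and Lemmas~\ref{Lemma_4_8} and~\ref{Lem:unit-index-bound} for the $2v_p(|G|)|\Sc\setminus S|$ term. The one difference is in your favour: the paper equates $[(O_{k,S}^{\times})_{\mathrm{tf}}:N_{K/k}(O_{K,S}^{\times})_{\mathrm{tf}}]$ with $|\widehat H^0(G,(O_{K,S}^{\times})_{\mathrm{tf}})|$ ``by definition'', whereas you only use that the former divides the latter, via $((O_{K,S}^{\times})_{\mathrm{tf}})^G\supseteq(O_{k,S}^{\times})_{\mathrm{tf}}\supseteq N_G(O_{K,S}^{\times})_{\mathrm{tf}}$; this is all the inclusion needs, and it sidesteps the fact that the $G$-invariants of the torsion-free quotient may be strictly larger than the image of $(O_{k,S}^{\times})_{\mathrm{tf}}$, since that quotient embeds in $H^1(G,\mu(K))$.
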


\begin{proof}
Since \(\Sc\supsetneq S\), we have \(|\Sc|>2\). Moreover, since
the places in \(\Sc\setminus S\) do not decompose in \(K/k\), one has
\[
r_{\Sc}(\chi)=1
\qquad
\text{for all }\chi\in\widehat G\setminus\{\chi_0\}
\]
by \eqref{r-S-chi}, and therefore
\[
e_{\Sc}^1=1-e_{\chi_0}.
\]
Put
\[
e:=e_{\Sc}^1,\qquad
M:=\mathbb Z_p(O_{K,S}^{\times})_{\mathrm{tf}},
\qquad
N:=\mathbb Z_p\varrho_{K/k,S}.
\]
We apply Lemma \ref{Lem:idempotent-index-comparison} with
\[
\mathcal O=\mathbb Z_p,\qquad E=\mathbb Q_p,\qquad
V=\mathbb Q_p(O_{K,S}^{\times})_{\mathrm{tf}}.
\]
Together with Proposition \ref{Index-simplifies-for-cardinal2}, this gives
\[
(eM:eN)_{\mathbb Z_p}
\bigl((1-e)M:(1-e)N\bigr)_{\mathbb Z_p}
=
p^{
\ell_{\mathbb Z_p}(eM/(eM\cap M))
-
\ell_{\mathbb Z_p}(eN/(eN\cap N))
}
h_{K,S}\mathbb Z_p.
\]

We now compute the \((1-e)\)-component. Since \(1-e=e_{\chi_0}\), Proposition
\ref{Prop:trivial-component-index}, applied with
\(m=1\), gives
\[
\bigl((1-e)M:(1-e)N\bigr)_{\mathbb Z_p}
=
\frac{h_{k,S}}
{
\left[
(O_{k,S}^{\times})_{\mathrm{tf}}
:
N_{K/k}(O_{K,S}^{\times})_{\mathrm{tf}}
\right]
}
\mathbb Z_p.
\]
By the definition of Tate cohomology,
\[
\left[
(O_{k,S}^{\times})_{\mathrm{tf}}
:
N_{K/k}(O_{K,S}^{\times})_{\mathrm{tf}}
\right]
=
\big| \widehat H^0(G, (O_{K,S}^{\times})_{\mathrm{tf}}) \big|,
\]
and we obtain
\[
(eM:eN)_{\mathbb Z_p}
=
p^{
\ell_{\mathbb Z_p}(eM/(eM\cap M))
-
\ell_{\mathbb Z_p}(eN/(eN\cap N))
}
\frac{h_{K,S}}{h_{k,S}}
\big| \widehat H^0(G, (O_{K,S}^{\times})_{\mathrm{tf}})\big| \, \mathbb Z_p.
\]

It remains to control the possible denominator coming from the idempotent. Since
\[
|G|e\in\mathbb Z_p[G]
\]
and
\[
\dim_{\mathbb Q_p}(eV)=|G|-1,
\]
Lemma \ref{Lem:idempotent-index-comparison} gives
\[
0\leq
\ell_{\mathbb Z_p}(eM/(eM\cap M))
\leq
v_p(|G|)(|G|-1)
\]
and
\[
0\leq
\ell_{\mathbb Z_p}(eN/(eN\cap N))
\leq
v_p(|G|)(|G|-1).
\]
Hence
\[
\ell_{\mathbb Z_p}(eM/(eM\cap M))
-
\ell_{\mathbb Z_p}(eN/(eN\cap N))
\leq
v_p(|G|)(|G|-1).
\]
Therefore
\[
p^{v_p(|G|)(|G|-1)}
\frac{h_{K,S}}{h_{k,S}}
\big| \widehat H^0(G, (O_{K,S}^{\times})_{\mathrm{tf}}) \big| \, \mathbb Z_p
\subseteq
(eM:eN)_{\mathbb Z_p}.
\]

Finally, by combining Lemma \ref{Lemma_4_8}, Lemma \ref{Lem:unit-index-bound} and using the fact that $v_p\left(
\prod_{v\in\Sc\setminus S}[G:I_v]
\right)\leq v_p(|G|)|\Sc\setminus S|$,
we get
\[
p^{2v_p(|G|)|\Sc\setminus S|}
(eM:eN)_{\mathbb Z_p}
\subseteq
\left(
e\mathbb Z_p(O_{K,\Sc}^{\times})_{\mathrm{tf}}
:
e\mathbb Z_p\varrho_{K/k,\Sc}
\right)_{\mathbb Z_p}.
\]
Combining this with the preceding inclusion gives
\[
p^{v_p(|G|)(|G|-1+2|\Sc\setminus S|)}
\frac{h_{K,S}}{h_{k,S}}
\big| \widehat H^0(G, (O_{K,S}^{\times})_{\mathrm{tf}}) \big| \, \mathbb Z_p
\subseteq
\left(
e_{\Sc}^1\mathbb Z_p(O_{K,\Sc}^{\times})_{\mathrm{tf}}
:
e_{\Sc}^1\mathbb Z_p\varrho_{K/k,\Sc}
\right)_{\mathbb Z_p}.
\]
This proves the result.\qed
\end{proof}
Next, we let 
\[
\Sc=S\cup\mathrm{Ram}(K/k).
\]
We assume that this latter is not $S$ and, in the number field case, that the rank-one Rubin--Stark
conjecture holds for \(K/k\) and \(\Sc\).

\begin{theorem}
\label{Thm:cardinality-two-final-divisibility}
Under these conditions, assume that the ramified places in \(K/k\) do not decompose in \(K/k\). Then the
following assertions hold.

\begin{enumerate}
\item If the norm map
\[
N_{K/k}:
\mathbb Z_p(O_{K,S}^{\times})_{\mathrm{tf}}
\longrightarrow
\mathbb Z_p(O_{k,S}^{\times})_{\mathrm{tf}}
\]
is surjective, then
\[
h_{k,S}(p)
\mid
h_{K,S}(p)\,
p^{v_p(|G|)\left(|G|-1+2|\mathrm{Ram}(K/k)|\right)}.
\]

\item If \(p\nmid |G|\), then
\[
h_{k,S}(p)\mid h_{K,S}(p).
\]
\end{enumerate}
\end{theorem}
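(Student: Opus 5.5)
The plan is to combine Theorem \ref{ClassNumberRelation-Cardinal2}, applied with \(\Sc=S\cup\mathrm{Ram}(K/k)\), with the integrality supplied by the rank-one Rubin--Stark conjecture. The hypotheses of Theorem \ref{ClassNumberRelation-Cardinal2} hold: \(\Sc\supsetneq S\) by assumption, and every place of \(\Sc\setminus S\subseteq\mathrm{Ram}(K/k)\) does not decompose in \(K/k\).

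The Rubin--Stark input comes as follows. In the function field case the conjecture is known by Remark \ref{REMA_3_2}, since \(\Sc\supseteq\mathrm{Ram}(K/k)\); in the number field case it is assumed. By Lemma \ref{Lem:rho-generated-by-rank-one-RS-elements} (with \(\Sc\) in place of \(S\)) and Corollary \ref{Cor:eS-index-well-defined}, the index
\[
\left(e_{\Sc}^1\mathbb Z_p(O_{K,\Sc}^{\times})_{\mathrm{tf}}:e_{\Sc}^1\mathbb Z_p\varrho_{K/k,\Sc}\right)_{\mathbb Z_p}
\]
is a classical index and hence lies in \(\mathbb Z_p\). Theorem \ref{ClassNumberRelation-Cardinal2} says this ideal contains \(p^{v_p(|G|)(|G|-1+2|\Sc\setminus S|)}\frac{h_{K,S}}{h_{k,S}}|\widehat H^0(G,(O_{K,S}^{\times})_{\mathrm{tf}})|\,\mathbb Z_p\). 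That element therefore lies in \(\mathbb Z_p\), i.e. it has non-negative \(p\)-adic valuation. Using \(|\Sc\setminus S|\le|\mathrm{Ram}(K/k)|\), we get
\[
v_p(h_{k,S})\le v_p(h_{K,S})+v_p\bigl|\widehat H^0\bigr|+v_p(|G|)\bigl(|G|-1+2|\mathrm{Ram}(K/k)|\bigr).
\]

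For (1), surjectivity of the norm on \(\mathbb Z_p(O_{K,S}^\times)_{\mathrm{tf}}\) means the \(p\)-part of \(\widehat H^0\) is trivial. Indeed, \(\widehat H^0=(O_{k,S}^{\times})_{\mathrm{tf}}/N_{K/k}(O_{K,S}^{\times})_{\mathrm{tf}}\) is a finite group, and tensoring with \(\mathbb Z_p\) gives exactly the cokernel of the \(p\)-adic norm map. The divisibility follows.

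For (2), with \(p\nmid|G|\) every \(p\)-exponent of the form \(v_p(|G|)(\dots)\) vanishes. Also \(\widehat H^0(G,-)\) is killed by \(|G|\), so its \(p\)-part is trivial. Hence \(v_p(h_{k,S})\le v_p(h_{K,S})\).

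The main obstacle is the final step, making the direction of the inclusions and the sign of the exponents consistent. Theorem \ref{ClassNumberRelation-Cardinal2} only gives a containment of ideals, and the lengths in Lemma \ref{Lem:idempotent-index-comparison} can only lower the valuation by the stated bound. So I would check carefully that "contained in an integral ideal" translates into non-negativity of the valuation of the displayed element, rather than an upper bound on it. I would also check that the generalized index here agrees with the classical index of Corollary \ref{Cor:eS-index-well-defined}, rather than its inverse.
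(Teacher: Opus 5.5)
Your proposal is correct and follows the paper's proof essentially step for step. You apply Theorem~\ref{ClassNumberRelation-Cardinal2} with $\Sigma=S\cup\mathrm{Ram}(K/k)$, use rank-one Rubin--Stark integrality via Corollary~\ref{Cor:eS-index-well-defined} to get the valuation inequality, and then remove the $\widehat H^0$ term by norm surjectivity in (1) and by $|G|$-torsion in (2). The direction checks you flag come out exactly as you wrote them: since $e_{\Sigma}^1\mathbb Z_p\varrho_{K/k,\Sigma}\subseteq e_{\Sigma}^1\mathbb Z_p(O_{K,\Sigma}^{\times})_{\mathrm{tf}}$, the generalized index equals the classical index times $\mathbb Z_p$, hence lies in $\mathbb Z_p$, so any $x$ with $x\mathbb Z_p$ contained in it satisfies $v_p(x)\ge 0$.
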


\begin{proof}
We apply Theorem \ref{ClassNumberRelation-Cardinal2} with
\[
\Sc=S\cup \mathrm{Ram}(K/k).
\]
Since the ramified places are assumed not to decompose in \(K/k\), the hypotheses
of Theorem \ref{ClassNumberRelation-Cardinal2} are satisfied. Hence
\[
p^{v_p(|G|)\left(|G|-1+2|\Sc\setminus S|\right)}
\frac{h_{K,S}}{h_{k,S}}
\big| \widehat H^0(G, (O_{K,S}^{\times})_{\mathrm{tf}}) \big| \, \mathbb Z_p
\subseteq
\left(
e_{\Sc}^1\mathbb Z_p(O_{K,\Sc}^{\times})_{\mathrm{tf}}
:
e_{\Sc}^1\mathbb Z_p\varrho_{K/k,\Sc}
\right)_{\mathbb Z_p}.
\]
By the rank-one Rubin--Stark conjecture for \(K/k\) and \(\Sc\), and by
Corollary \ref{Cor:eS-index-well-defined}, we have
\[
\left(
e_{\Sc}^1\mathbb Z_p(O_{K,\Sc}^{\times})_{\mathrm{tf}}
:
e_{\Sc}^1\mathbb Z_p\varrho_{K/k,\Sc}
\right)_{\mathbb Z_p}
\subseteq
\mathbb Z_p.
\]
Therefore
\begin{equation} \label{auxiliary-valuation-inequality}
v_p\left(\frac{h_{K,S}}{h_{k,S}}\right)
+
v_p(|G|)\left(|G|-1+2|\Sc\setminus S|\right)
+
v_p\left(\big| \widehat H^0(G, (O_{K,S}^{\times})_{\mathrm{tf}}) \big| \right)
\geq 0.
\end{equation}

Assume now that the norm map
\(
N_{K/k}:
\mathbb Z_p(O_{K,S}^{\times})_{\mathrm{tf}}
\longrightarrow
\mathbb Z_p(O_{k,S}^{\times})_{\mathrm{tf}}
\)
is surjective. By the standard identity
\[
\left[
(O_{k,S}^{\times})_{\mathrm{tf}}
:
N_{K/k}(O_{K,S}^{\times})_{\mathrm{tf}}
\right]
=
\big| \widehat H^0(G, (O_{K,S}^{\times})_{\mathrm{tf}}) \big|,
\]
the \(p\)-part of \(\widehat H^0(G, (O_{K,S}^{\times})_{\mathrm{tf}})\) is trivial. Hence
\[
v_p\left(\frac{h_{K,S}}{h_{k,S}}\right)
+
v_p(|G|)\left(|G|-1+ 2 |\Sc\setminus S|\right)
\geq 0.
\]
Since
\[
|\Sc\setminus S|\leq |\mathrm{Ram}(K/k)|,
\]
we get
\[
h_{k,S}(p)
\mid
h_{K,S}(p)\,
p^{v_p(|G|)\left(|G|-1+ 2 |\mathrm{Ram}(K/k)|\right)}.
\]
This proves \((1)\).
If \(p\nmid |G|\), then \(v_p(|G|)=0\), and the \(p\)-part of
\(\widehat H^0(G, (O_{K,S}^{\times})_{\mathrm{tf}})\) is trivial because Tate cohomology is
annihilated by \(|G|\). Therefore the preceding valuation inequality \eqref{auxiliary-valuation-inequality} reduces to
\[
v_p\left(\frac{h_{K,S}}{h_{k,S}}\right)\geq 0,
\]
or equivalently
\[
h_{k,S}(p)\mid h_{K,S}(p).
\]
This proves \((2)\).\qed
\end{proof}

\subsection{The case \( |S|\geq 3 \)} \label{subsection:S>=3}

We finish by indicating what changes when \(|S|\geq 3\). Suppose that \(S\)
contains one place \(\mathfrak v\) which splits completely in \(K/k\), and that
the remaining places of \(S\) do not decompose in \(K/k\). Put
\[
S':=\{\mathfrak v\},
\qquad
m:=|S\setminus S'|.
\]
For the trivial character, one has
\[
r_S(\chi_0)=|S|-1=m.
\]
Thus, when \(m\geq 2\), the trivial component is no longer part of the rank-one (or rank-zero)
idempotent. Equivalently, the corresponding Rubin--Stark element belongs to a
higher exterior bidual of the group of \(S\)-units.

The same phenomenon appears on the divisor side: \(X_{K,S}\) is an extension of
\(\Delta G\) by a trivial \(G\)-module of rank \(m\). The following computation
shows how this produces extra factors involving the class number and regulator
of \(k\).
\begin{proposition}
\label{Prop:XKS-annihilator-index-general-S}
Assume that \(S\) contains a unique place \(\mathfrak v\) which splits completely
in \(K/k\). Put
\[
S':=\{\mathfrak v\}.
%\qquad
%m:=|S\setminus S'|.
\]
Assume moreover that \(m\geq 1\) and that every place in \(S\setminus S'\) does
not decompose in \(K/k\). Then
\[
\left[
X_{K,S}:
\mathrm{Ann}_{\mathbb Z[G]}(\mu(K))X_{K,S}
\right]
=
w_Kw_k^{|S\setminus S'|-1}.
\]
\end{proposition}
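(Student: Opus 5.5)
This statement coincides with Proposition~\ref{Prop:XKS-annihilator-index-two-cases}(2), since the hypotheses force $|S|=m+1\geq 2$. One could simply invoke that result, but I would rerun its argument here so that the role of $m$ is explicit. Throughout, put $A:=\mathrm{Ann}_{\mathbb Z[G]}(\mu(K))$.

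First, fix a place $w_0\mid\mathfrak v$ and, for each $v_i\in S\setminus S'$, the unique place $w_i\mid v_i$. Then $Y_{K,S}\simeq\mathbb Z[G]w_0\oplus\bigoplus_{i=1}^m\mathbb Z w_i$, and $X_{K,S}$ is the kernel of $(a,n_1,\dots,n_m)\mapsto \mathrm{Aug}_G(a)+\sum_i n_i$. Projection to $\mathbb Z^m$ is surjective with kernel $\Delta G$, which gives the exact sequence
\[
0\to\Delta G\to X_{K,S}\to\mathbb Z^m\to 0 .
\]
The image of $AX_{K,S}$ in $\mathbb Z^m$ is $\mathrm{Aug}_G(A)\mathbb Z^m$, because $G$ acts trivially on the quotient.

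The key step is the identity $AX_{K,S}\cap\Delta G=A\cap\Delta G$. The inclusion $\subseteq$ holds because $AX_{K,S}\subseteq A\,\mathbb Z[G]w_0\oplus\mathrm{Aug}_G(A)\mathbb Z^m$. For $\supseteq$, take $\alpha\in A\cap\Delta G$ and use $m\geq1$: the element $(1,-1,0,\dots,0)$ lies in $X_{K,S}$, and $\alpha\cdot(1,-1,0,\dots,0)=(\alpha,0,\dots,0)$ since $\alpha$ has augmentation zero. This is the only place the hypothesis $m\geq 1$ enters, and it is the main point to verify carefully. The snake lemma then yields
\[
0\to\Delta G/(A\cap\Delta G)\to X_{K,S}/AX_{K,S}\to\mathbb Z^m/\mathrm{Aug}_G(A)\mathbb Z^m\to0 .
\]

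Finally, I count orders. By \eqref{A-and-AugA-index} we have $[\mathbb Z[G]:A]=w_K$ (as $\mathbb Z[G]/A\simeq\mu(K)$) and $\mathrm{Aug}_G(A)=w_k\mathbb Z$. By \eqref{Delta-A-index}, $[\Delta G:A\cap\Delta G]=w_K/w_k$. Multiplying out gives
\[
\left[X_{K,S}:AX_{K,S}\right]=\frac{w_K}{w_k}\,w_k^{m}=w_K\,w_k^{|S\setminus S'|-1},
\]
which is the claimed formula.
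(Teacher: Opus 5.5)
Your proposal is correct and follows the paper's proof essentially verbatim. It uses the same extension $0\to\Delta G\to X_{K,S}\to\mathbb Z^m\to 0$, the identity $AX_{K,S}\cap\Delta G=A\cap\Delta G$ (which the paper cites from \eqref{Ann-X-cap-Delta-G} and you re-derive from $m\geq 1$), and the same order count via $[\mathbb Z[G]:A]=w_K$ and $\mathrm{Aug}_G(A)=w_k\mathbb Z$; your remark that the statement coincides with Proposition~\ref{Prop:XKS-annihilator-index-two-cases}(2) is also accurate.
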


\begin{proof}
Choose a place \(w_0\) of \(K\) above \(\mathfrak v\), and write
\[
S\setminus S'=\{v_1,\dots,v_m\}.
\]
Since each \(v_i\) does not decompose in \(K/k\), there is a unique place
\(w_i\) of \(K\) above \(v_i\), and \(G\) acts trivially on \(w_i\). Hence
\[
Y_{K,S}
\simeq
\mathbb Z[G]w_0\oplus \bigoplus_{i=1}^m\mathbb Z w_i.
\]
Under this identification,
\[
X_{K,S}
=
\left\{
(a,n_1,\dots,n_m)\in \mathbb Z[G]\oplus\mathbb Z^m
\; ; \;
\mathrm{Aug}_G(a)+n_1+\cdots+n_m=0
\right\}.
\]
Thus there is an exact sequence of \(\mathbb Z[G]\)-modules
\[
0
\longrightarrow
\Delta G
\longrightarrow
X_{K,S}
\longrightarrow
\mathbb Z^m
\longrightarrow
0,
\]
where \(G\) acts trivially on \(\mathbb Z^m\).
Since \(G\) acts trivially on \(\mathbb Z^m\), the image of
\[
\mathrm{Ann}_{\mathbb Z[G]}(\mu(K))X_{K,S}
\]
in \(\mathbb Z^m\) is
\[
\mathrm{Aug}_G\bigl(
\mathrm{Ann}_{\mathbb Z[G]}(\mu(K))
\bigr)\mathbb Z^m.
\]
By \eqref{Ann-X-cap-Delta-G}, we have
\[
\mathrm{Ann}_{\mathbb Z[G]}(\mu(K))X_{K,S}\cap \Delta G
=
\mathrm{Ann}_{\mathbb Z[G]}(\mu(K))\cap \Delta G.
\]
Consequently, we have an exact sequence
\[
0
\longrightarrow
\frac{\Delta G}
{\mathrm{Ann}_{\mathbb Z[G]}(\mu(K))\cap \Delta G}
\longrightarrow
\frac{X_{K,S}}
{\mathrm{Ann}_{\mathbb Z[G]}(\mu(K))X_{K,S}}
\longrightarrow
\frac{\mathbb Z^m}
{\mathrm{Aug}_G\bigl(
\mathrm{Ann}_{\mathbb Z[G]}(\mu(K))
\bigr)\mathbb Z^m}
\longrightarrow
0.
\]
Therefore
\[
\left[
X_{K,S}:
\mathrm{Ann}_{\mathbb Z[G]}(\mu(K))X_{K,S}
\right]
=
\left[
\Delta G:
\mathrm{Ann}_{\mathbb Z[G]}(\mu(K))\cap \Delta G
\right]
\left[
\mathbb Z:
\mathrm{Aug}_G\bigl(
\mathrm{Ann}_{\mathbb Z[G]}(\mu(K))
\bigr)
\right]^m.
\]

By our  standard augmentation computation in (\ref{roots-of-units-augmentation}), we have
\[
\mathrm{Aug}_G\bigl(
\mathrm{Ann}_{\mathbb Z[G]}(\mu(K))
\bigr)
=
\mathrm{Ann}_{\mathbb Z}(\mu(k))
=
w_k\mathbb Z.
\]
Hence
\[
\left[
\mathbb Z:
\mathrm{Aug}_G\bigl(
\mathrm{Ann}_{\mathbb Z[G]}(\mu(K))
\bigr)
\right]
=
w_k.
\]

On the other hand,
\[
\mathbb Z[G]/\mathrm{Ann}_{\mathbb Z[G]}(\mu(K))
\simeq
\mu(K),
\]
and therefore
\[
\left[
\mathbb Z[G]:
\mathrm{Ann}_{\mathbb Z[G]}(\mu(K))
\right]
=
w_K.
\]
The exact sequence
\[
0
\longrightarrow
\frac{\Delta G}
{\mathrm{Ann}_{\mathbb Z[G]}(\mu(K))\cap \Delta G}
\longrightarrow
\frac{\mathbb Z[G]}
{\mathrm{Ann}_{\mathbb Z[G]}(\mu(K))}
\longrightarrow
\frac{\mathbb Z}
{\mathrm{Aug}_G\bigl(
\mathrm{Ann}_{\mathbb Z[G]}(\mu(K))
\bigr)}
\longrightarrow
0
\]
gives
\[
\left[
\Delta G:
\mathrm{Ann}_{\mathbb Z[G]}(\mu(K))\cap \Delta G
\right]
=
\frac{w_K}{w_k}.
\]
Substituting this into the preceding formula yields
\[
\left[
X_{K,S}:
\mathrm{Ann}_{\mathbb Z[G]}(\mu(K))X_{K,S}
\right]
=
\frac{w_K}{w_k}w_k^m
=
w_Kw_k^{m-1}.
\]
This proves the proposition.\qed
\end{proof}

\begin{theorem}
\label{Thm:generalized-index-large-S}
Assume  that \(|S\setminus S'|\geq 1\) and that every place in \(S\setminus S'\) does
not decompose in \(K/k\). Then
\[
\left(
(O_{K,S}^{\times})_{\mathrm{tf}}
:
\varrho_{K/k,S}
\right)_{\mathbb Z}
=
h_{K,S}h_{k,S}^{|S\setminus S'|-1}R_{k,S}^{|S\setminus S'|-1}\mathbb Z.
\]
\end{theorem}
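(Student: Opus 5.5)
This follows by combining Proposition \ref{Prop:generalized-index-formula-rank-one} with Proposition \ref{Prop:XKS-annihilator-index-general-S}. Write $m:=|S\setminus S'|\geq 1$ and $A:=\mathrm{Ann}_{\mathbb Z[G]}(\mu(K))$. Proposition \ref{Prop:generalized-index-formula-rank-one} holds under exactly the present splitting hypotheses: one place $\mathfrak v$ is totally split, and every other place of $S$ does not decompose. It gives
\[
\left((O_{K,S}^{\times})_{\mathrm{tf}}:\varrho_{K/k,S}\right)_{\mathbb Z}
=\frac{h_{K,S}}{w_K}\left(\frac{h_{k,S}R_{k,S}}{w_k}\right)^{m-1}\left[X_{K,S}:AX_{K,S}\right]\mathbb Z .
\]
I would briefly recheck that its proof still works for general $m$. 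Its only ingredients are:
\begin{itemize}
\item the multiplicativity decomposition of the index through $\lambda_{K,S}$, $X_{K,S}$ and $\Theta^*_{K/k,S}(0)X_{K,S}$;
\item the dimension count: $\dim e_{\chi_0}\mathbb CX_{K,S}=m$, and $\dim e_\chi\mathbb CX_{K,S}=1$ for $\chi\neq\chi_0$;
\item the resulting determinant $\zeta^*_{K,S}(0)\,\zeta^*_{k,S}(0)^{m-1}$, evaluated by the $S$-class number formulae.
\end{itemize}
None of these depends on $m\le 1$.

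Next I would substitute $[X_{K,S}:AX_{K,S}]=w_Kw_k^{m-1}$ from Proposition \ref{Prop:XKS-annihilator-index-general-S}. This is also part (2) of Proposition \ref{Prop:XKS-annihilator-index-two-cases}, and it applies since $m\geq1$. The factor $w_K$ cancels against $1/w_K$, and $w_k^{m-1}$ cancels against $w_k^{-(m-1)}$. This leaves
\[
h_{K,S}\,h_{k,S}^{m-1}R_{k,S}^{m-1}\,\mathbb Z ,
\]
which is the claimed formula.

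The main point needing care is that the determinant on the trivial isotypic component is exactly $\zeta^*_{k,S}(0)^m$. This uses two facts. First, $e_{\chi_0}\Theta^*_{K/k,S}(0)=L^*_S(0,\chi_0)e_{\chi_0}=\zeta^*_{k,S}(0)e_{\chi_0}$. Second, the product over all characters $\chi$ of $L^*_S(0,\chi^{-1})$ equals $\zeta^*_{K,S}(0)$ for this imprimitive $S$. That second fact holds because the $S$-truncated Artin $L$-functions factor $\zeta_{K,S}$: the places of $K$ above $S$ are exactly those removed. Signs are irrelevant for the ideals $(\cdot)\mathbb Z$. Both facts were already used in Proposition \ref{Prop:generalized-index-formula-rank-one}, so the theorem is essentially a bookkeeping combination of the two propositions.
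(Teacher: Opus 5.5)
Your proposal is correct and is essentially the paper's own proof. The paper simply combines Proposition~\ref{Prop:generalized-index-formula-rank-one}, which holds for any $m=|S\setminus S'|$ under the standing splitting hypotheses, with $[X_{K,S}:\mathrm{Ann}_{\mathbb Z[G]}(\mu(K))X_{K,S}]=w_Kw_k^{m-1}$ from Proposition~\ref{Prop:XKS-annihilator-index-general-S}, so that the powers of $w_K$ and $w_k$ cancel exactly as you describe.
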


\begin{proof}
Combine Proposition \ref{Prop:generalized-index-formula-rank-one} and Proposition \ref{Prop:XKS-annihilator-index-general-S}.
\qed
\end{proof}
\section{Conclusion and perspectives}
\label{section:conclusion}

The computations above show a clear difference between the cases \(|S|=1,2\)
and the higher-cardinality situation. In the first two cases, the generalized
Stickelberger module leads to relatively direct class number relations. By
contrast, when \(|S|\geq 3\), the trivial character has order of vanishing at
least two. As a consequence, the trivial component is naturally related to
higher Rubin--Stark elements, and the index formula of Theorem
\ref{Thm:generalized-index-large-S} involves higher powers of invariants of the
base field \(k\), namely
\[
h_{k,S}^{|S\setminus S'|-1}R_{k,S}^{|S\setminus S'|-1}.
\]
These terms are arithmetically meaningful, but they make the resulting formula
less directly suited to extracting simple divisibility relations between the
class numbers of \(K\) and \(k\).

This suggests that, beyond the rank-one range, one should replace the rank-one
generalized Stickelberger module by a higher-rank analogue. A possible direction
would be to define a module \(\varrho_{K/k,S}^{(r)}\) as the inverse image,
under the appropriate higher regulator map, of a lattice of the form
\[
\Theta_{K/k,S}^{*}(0)\,
\mathrm{Ann}_{\mathbb Z[G]}(\mu(K))\,
\bigcap_{\mathbb Z[G]}^r X_{K,S},
\]
where \(\bigcap_{\mathbb Z[G]}^r\) denotes the exterior bidual. Such a
construction would be more compatible with higher Rubin--Stark elements. It
remains to determine under which additional hypotheses on \(S\) and on the
extension \(K/k\) this higher-rank construction yields effective class number
relations.

\end{document}